\makeatother \theoremstyle{remark}
\numberwithin{equation}{section}
\theoremstyle{definition} 
\newtheorem{theorem}{Theorem}[section]
\newtheorem{definition}[theorem]{Definition}\theoremstyle{definition}
\newtheorem{proposition}[theorem]{Proposition}
\newtheorem{lemma}[theorem]{Lemma}
\newtheorem{corollary}[theorem]{Corollary}
\newtheorem{remark}[theorem]{Remark}
\newtheorem{example}[theorem]{Example}
\newtheorem{conjecture}[theorem]{Conjecture}
\newtheorem*{thm}{Theorem}
\newtheorem*{conj}{Conjecture}
\newcommand{\ra}{\rightarrow}
\newcommand{\bs}{\mathbf{s}}
\newcommand{\bt}{\mathbf{t}}
\newcommand{\bm}{\mathbf{m}}
\newcommand{\lie}{\mathfrak}
\newcommand{\St}{\operatorname*{St}}
\newcommand{\Cone}{\operatorname*{Cone}}
\newcommand{\blambda}{\mathbf{\lambda}}
\newcommand{\bmu}{\mathbf{\mu}}
\newcommand{\bz}{\mathbb{Z}}
\begin{document}

\title{Marked chain-order polytopes}
\author{Xin Fang, Ghislain Fourier}
\address{\newline Mathematisches Institut, Universit\"at zu K\"oln, Germany} 
\email{xinfang.math@gmail.com}
\address{\newline School of Mathematics and Statistics, University of Glasgow, UK}
\email{ghislain.fourier@glasgow.ac.uk}
\keywords{Marked order polytopes, Marked chain polytopes, Marked chain-order polytopes, Ehrhart equivalence}

\begin{abstract}
We introduce in this paper the marked chain-order polytopes associated to a marked poset, generalizing the marked chain polytopes and marked order polytopes by putting them as extremal cases in an Ehrhart equivalent family. Some combinatorial properties of these polytopes are studied. This work is motivated by the framework of PBW degenerations in representation theory of Lie algebras.
\end{abstract}

\maketitle
\section*{Introduction}

\subsection*{Marked chain and order polytopes}
For a finite poset $(P, \prec)$, Stanley \cite{Sta86} introduced two polytopes associated to $P$, the chain polytope and the order polytope, both defined in $\mathbb{R}_{\geq 0}^{|P|}$. The order polytope is defined along the order relations in $P$: for $p \prec q$ the coordinates $x_p$ and $x_q$ corresponding to $p$ and $q$ satisfy $x_p \leq x_q$. 
One may think of the cover relations of the poset as defining hyperplanes of a cone and the order polytope is the restriction of the cone to the $|P|$-dimensional cube of volume $1$. 
The chain polytope is defined by the chains in the poset, \emph{e.g.} a chain $p_1 \prec \ldots \prec p_s$ in $P$ gives rise to the relation $x_{p_1} + \ldots + x_{p_s} \leq 1$. Both are interesting examples for 0-1 polytopes and gained, especially recently, some attention (see for example \cite{HL12a, HL12b, HMT15}).
\par
These notions have been generalized by Ardila-Bliem-Salazar \cite{ABS11} to marked posets. Let $A \subset P$ be a subset containing at least all minimal and all maximal elements, and $\blambda \in \mathbb{Z}_{\geq 0}^{|A|}$ be a marking vector.  
The \textit{marked order polytope} $\mathcal{O}_{P,A}(\blambda)$ is the restriction of the cone to a cuboid defined by $\blambda$, while for the \textit{marked chain polytope}  $\mathcal{C}_{P,A}(\blambda)$ the defining hyperplanes (given by chains) are translated via $\blambda$. 
In both cases, the chain and the order polytopes are obtained as special cases by adding a smallest and largest element to $P$, marked by $0$ and $1$.
\par
It has been shown in \cite{ABS11} that the marked order and the marked chain polytope are Ehrhart equivalent for a fixed triple $(P, A, \blambda)$. Further, necessary and sufficient conditions on the poset have been provided for order and chain polytopes (resp. marked order and marked chain polytopes) to be unimodular equivalent \cite{HL12a, Fou15a}.

\subsection*{Motivations from representation theory}
We are in particular interested in polytopes, whose lattice points parametrize (monomial) bases for representations of semi-simple Lie algebras. Therefore our main and motivating example is the poset $P_n$, where the vertices are $\{p_{i,j}\mid 0\leq i\leq j\leq n\}$ and the cover relations for any $1\leq i\leq j\leq n$ are: $p_{i-1,j}\prec p_{i,j}\prec p_{i-1,j-1}$ (see Example~\ref{gt-pattern} for the $n=4$ case). 
We fix the linearly ordered marking set $A_n = \{ p_{0,0},\, p_{0,1},\, \ldots,\, p_{0,n} \},$ with marking $\lambda_k = \lambda_{p_{0,k}}$, so $\blambda=(\lambda_0 \geq \lambda_1\geq \ldots \geq \lambda_n)$ is a partition.
\par
The order polytope $\mathcal{O}_{P_n,A_n}(\blambda)$ is known as the \textit{Gelfand-Tsetlin polytope} (\cite{GT50}), whose lattice points parametrize a monomial basis of the simple $\lie{sl}_{n+1}$-module $V(\blambda)$ of highest weight $\blambda$. It has been shown \cite{FFoL11a} that the lattice points in  $\mathcal{C}_{P_n,A_n}(\blambda)$ parametrize a monomial basis in the PBW-graded module $\operatorname{gr } V(\blambda)$. 
This module is the associated graded module for the natural PBW-degree filtration on the universal enveloping algebra of $\lie{sl}_{n+1}$. Similar results on marked poset polytopes are known for the symplectic Lie algebra \cite{FFoL11b}, certain Demazure modules for $\lie{sl}_n$ \cite{Fou14, BF15}. 
\par
Considering the corresponding associated graded algebras and modules provides an interesting bridge between Lie theory and commutative algebra (see \cite{FFR15} for another point of view via quantum groups on this), and further induces (toric) degenerations of the complete flag variety (for more details see  \cite{Fei11, Fei12, FFoL13b}).
\par
An attempt to generalize these PBW-degenerations has been taken recently in \cite{CIFFFR16}, where linear degenerations of the complete flag variety are studied and it turns out that the space of global sections of line bundles on certain of these degenerations (the PBW locus) can be considered as associated graded modules $V^{\mathbf{i}}(\lambda)$ (with an appropriate filtration on the universal enveloping algebra) of $V(\lambda)$. It is natural to ask for parametrizations of bases in $V^\mathbf{i}(\lambda)$ via polytopes.

\subsection*{Marked chain-order polytopes}
In this paper we introduce a new family of polytopes associated to a marked poset $(P, A, \blambda)$, which in the special case of $P_n$ conjecturally parametrize a monomial basis of $V^{\mathbf{i}}(\lambda)$. 
\par
For this let $U_1\cup U_2 = P \setminus A, U_1 \cap U_2= \emptyset$ be a decomposition of the non-marked elements of $P$, we call the decomposition \emph{admissible} if there is no element in $U_1$ covered by an element in $U_2$.
\par
For such an admissible decomposition we define the \textit{marked chain-order polytope} $\mathcal{CO}_{U_1, U_2}(\blambda)$ by considering order conditions for $U_1$ and then regard $A \cup U_1$ as the marked subset for the chain part $U_2$, (for details see Definition~\ref{def:chain-order}).
\par
We restrict ourselves here for the sake of representation theory to admissible decompositions. Polytopes associated to an arbitrary decomposition (called \textit{layered marked chain-order polytopes}) will be studied in a forthcoming publication \cite{FF15}. 
It should be pointed out that these polytopes are fundamentally different from the order-chain polytopes defined in \cite{HLLMT15}, see Remark~\ref{remHLLMT} for details.
\par
Let us assume that $(P, A, \blambda)$ is regular (see Definition~\ref{def:regular}), \emph{e.g.} there are no nontrivial redundancies among the defining inequalities. 
We call $p \in P \setminus A$ a \emph{star element} if there are at least two elements in $P$ covering $p$ and there are at least two maximal chains in $P$ having $p$ as their largest element. The set of all star elements is denoted by $\St(P)$.
The first main result is

\begin{thm} 
Let $(U_1, U_2)$ and $(V_1, V_2)$ be two admissible decompositions with $U_1\subset V_1$. Then the polytopes $\mathcal{CO}_{U_1, U_2}(\blambda)$ and $\mathcal{CO}_{V_1, V_2}(\blambda)$ are unimodular equivalent if and only if 
$$U_1 \cap \St(P) = V_1 \cap \St(P).$$
If moreover $U_1 \cap \St(P) = (V_1\cap\St(P)) \cup \{p\}$ for $p\in \St(P)$, then the number of facets in $\mathcal{CO}_{V_1, V_2}(\blambda)$ is strictly less than the number of facets in $\mathcal{CO}_{U_1, U_2}(\blambda)$.
\end{thm}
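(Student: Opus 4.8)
The plan is to reduce the whole statement to the analysis of a single non-marked element crossing the chain/order divide, and then to read off from that local analysis both an explicit unimodular equivalence (when it exists) and an exact count of how the number of facets changes. First I would record the combinatorial meaning of admissibility: a decomposition $(U_1,U_2)$ is admissible exactly when $U_1$ is an up-set and $U_2$ a down-set of $P\setminus A$. Hence, given $U_1\subset V_1$ with both up-sets, the difference $V_1\setminus U_1$ can be enumerated $p_1,\dots,p_k$ so that each partial set $U_1\cup\{p_1,\dots,p_j\}$ is again an up-set (repeatedly adjoin a maximal element of the remaining difference). This replaces the comparison of $\mathcal{CO}_{U_1,U_2}(\blambda)$ and $\mathcal{CO}_{V_1,V_2}(\blambda)$ by a chain of single-element moves in which one element $p$ passes from the chain part to the order part. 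Admissibility then forces every element covering $p$ to lie already in the order part (an effective mark) and the whole down-set of $p$ to lie in $W_2\cup A$. I will write $d^+(p)$ for the number of elements covering $p$ and $m^-(p)$ for the number of maximal chains of $P$ with top element $p$, so that $p\in\St(P)$ means precisely $d^+(p)\ge 2$ and $m^-(p)\ge 2$.

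For one such move I would treat the two situations separately. When $p\notin\St(P)$, so $m^-(p)=1$ or $d^+(p)=1$, I expect an explicit unimodular shear to do the job: if $d^+(p)=1$ with unique cover $r$, the assignment $x_p=x_r-y_p$ (identity on all other coordinates) should carry $\mathcal{CO}_{W_1,W_2}(\blambda)$ onto the moved polytope; if $m^-(p)=1$, the ``lower'' shear sending $x_p$ to $y_p$ plus the common value of the unique saturated chain below $p$ does the same. Each is integral of determinant $\pm1$, and composing them over all (necessarily non-star) elements of $V_1\setminus U_1$ proves the ``if'' direction. For the quantitative and ``only if'' parts I would enumerate facets directly. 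In the regular case the facets of $\mathcal{CO}_{W_1,W_2}(\blambda)$ are the order inequalities attached to covers inside $A\cup W_1$, the chain inequalities attached to maximal chains of $W_2$ between consecutive effective marks, and the non-negativity inequalities $y_q\ge 0$ for $q\in W_2$, each irredundant. Moving $p$ leaves untouched every inequality not mentioning $p$; it deletes the $m^-(p)\,d^+(p)$ chain facets through $p$ together with the single facet $y_p\ge 0$, and creates the $m^-(p)$ chain facets now terminating at the new mark $p$ together with the $d^+(p)$ order facets $x_p\le x_r$. The net change in the number of facets is therefore
\[
\bigl(m^-(p)+d^+(p)\bigr)-\bigl(m^-(p)\,d^+(p)+1\bigr)=-\,(m^-(p)-1)(d^+(p)-1).
\]

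This one computation settles everything that remains. The quantity $(m^-(p)-1)(d^+(p)-1)$ is positive exactly when $p\in\St(P)$ and zero otherwise, so a non-star move preserves the number of facets (consistently with the shear above) while a star move strictly decreases it by at least $1$. Summing over the single-element moves, $\mathcal{CO}_{V_1,V_2}(\blambda)$ has strictly fewer facets than $\mathcal{CO}_{U_1,U_2}(\blambda)$ as soon as $V_1\setminus U_1$ meets $\St(P)$; since the number of facets is a unimodular invariant, this gives the ``only if'' direction, and the displayed formula applied to a single extra star element $p$ gives the final sentence, the decrease being exactly $(m^-(p)-1)(d^+(p)-1)\ge 1$.

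The step I expect to be genuinely delicate is the facet enumeration in the regular case: one must verify that the three families above are exactly the facets and that each is irredundant, and in particular account honestly for the non-negativity facet $y_p\ge 0$, whose disappearance (it is absorbed into the chain inequalities that now end at $p$) is precisely the ``$+1$'' responsible for strictness at a star element. A bare diamond $q_1,q_2\prec p\prec r_1,r_2$ already shows that without regularity the naive counts collapse and the strict inequality fails, so the crux is to see how regularity excludes such redundancies and to check that adjoining $p$ to the set of marks does not silently alter a chain inequality supported away from $p$.
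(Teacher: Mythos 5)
Your proposal follows essentially the same route as the paper's own argument: reduce to moving a single element $p$ across the chain/order divide, handle non-star $p$ by exactly the two shears used in the proof of Theorem~\ref{Thm:unimodular} (namely $x_p\mapsto x_r-x_p$ when $p$ has a unique cover $r$, and subtraction of the unique saturated chain below $p$ in the other case), and deduce the ``only if'' direction together with the strict facet decrease from the facet count of Proposition~\ref{Prop:facetcount} and the difference formula $\bigl(|p\rightarrow|-1\bigr)\bigl(|\rightsquigarrow p|-1\bigr)$ of Corollary~\ref{Cor:SHLfacet}, the number of facets being a unimodular invariant, exactly as in Corollary~\ref{Cor:unimodular}. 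If anything, you give slightly more than the paper at the one point where it is silent---Proposition~\ref{Prop:facetcount} and Corollary~\ref{Cor:SHLfacet} are stated there without proof, whereas you sketch the delete/create bookkeeping behind the difference formula and correctly identify the irredundancy of the three families of inequalities (in the regular case) as the remaining delicate verification, which neither you nor the paper carries out in detail.
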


We would like to state the following conjecture, a generalization of a conjecture by Stanley, Hibi and Li for chain and order polytopes and by the second author for marked chain and marked order polytopes  \cite{Sta86, HL12a, Fou15a}:

\begin{conj}
Let $(U_1, U_2)$ and $(V_1, V_2)$ be two admissible decompositions such that $U_1 \cap \St(P) = (V_1\cap\St(P)) \cup \{p\}$ for $p\in \St(P)$. Then for any $0 \leq i \leq |P\setminus A|$, the number of $i$-dimensional faces in $\mathcal{CO}_{V_1, V_2}(\blambda)$ is greater or equal to the number of $i$-dimensional faces in $\mathcal{CO}_{U_1, U_2}(\blambda)$.
\end{conj}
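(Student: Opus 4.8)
The plan is to reduce the global comparison to a single ``star flip'' and then to compare the two face lattices locally at the flipped element via an explicit combinatorial model of the faces. First I would invoke the preceding Theorem to normalize the two decompositions. Since a unimodular equivalence induces an isomorphism of face lattices, the $f$-vector of $\mathcal{CO}_{U_1,U_2}(\blambda)$ depends only on $U_1\cap\St(P)$, and that of $\mathcal{CO}_{V_1,V_2}(\blambda)$ only on $V_1\cap\St(P)$. Hence it suffices to prove the inequality for one convenient pair of admissible representatives with the prescribed star intersections, and I would choose representatives differing only by moving the single star element $p$ between the order and the chain part. The admissibility of the two given decompositions is exactly what makes such a single-flip pair available: placing the non-star predecessors of $p$ on the chain side and its non-star successors on the order side yields admissible representatives in both of which every other element sits identically, so the two defining systems differ only in the inequalities involving the coordinate $x_p$. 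A star element immediately below $p$ is forced onto the chain side in both, and one immediately above $p$ onto the order side in both, so no such element obstructs the construction.

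Next I would establish a combinatorial description of the faces of a regular marked chain-order polytope: a face is recorded by the set of defining relations that become tight on it, and for a regular triple these data are organized by a ``tightness diagram'' that glues the order part $U_1$ along equalities and selects saturated chain inequalities on the chain part $U_2$, the dimension of the face equalling the number of free blocks of the diagram. With such a model in place, the goal is a dimension-preserving injection from the faces of $\mathcal{CO}_{U_1,U_2}(\blambda)$ into the faces of $\mathcal{CO}_{V_1,V_2}(\blambda)$, obtained by replacing the local tightness data around $p$ of order type by the corresponding data of chain type and leaving all remaining data untouched. One must then verify that this local replacement (i) carries a genuine face to a genuine face, (ii) preserves the number of free blocks, hence the dimension, and (iii) is injective. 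The surplus of faces on the target side is produced precisely by the branching that defines a star element, namely the at least two covers above $p$ together with the at least two maximal chains below it, which permits strictly more independent tightness patterns than the order relations around $p$ allow.

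The main obstacle is step (iii) in all codimensions simultaneously. For facets a single tight relation is involved and the comparison is the transparent count already carried out in the preceding Theorem; the task here is to propagate the comparison to all faces of higher codimension, each cut out by many simultaneously tight relations whose incidences around a star element interact intricately. Proving that the local replacement respects these incidences, so that it is a well-defined injection of face \emph{posets} rather than merely a numerical inequality in each fixed dimension, is the heart of the matter. A natural but treacherous shortcut is to use the piecewise-linear transfer map underlying the Ehrhart equivalence: it provides a common polyhedral subdivision of the two polytopes, yet it does not send faces to faces, which forces one back to the explicit diagrammatic correspondence and to careful bookkeeping of how the branching at the star element $p$ generates the extra faces. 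It is exactly this uniform control over faces of every dimension, absent from the facet argument, that keeps the statement at the level of a conjecture.
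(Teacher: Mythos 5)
You should be aware at the outset that the statement you were given is stated in the paper as a \emph{conjecture}, not a theorem: the authors prove only the top-dimensional case, namely the inequality for the number of facets (Proposition~\ref{Prop:facetcount} and Corollary~\ref{Cor:SHLfacet}), and explicitly leave every other dimension open. So there is no proof in the paper to compare against --- and, by your own admission, your text is not a proof either, but a strategy with a missing core step. The gap is exactly where you locate it: the combinatorial ``tightness diagram'' model of the face lattice of a regular marked chain-order polytope, and the dimension-preserving injection of faces obtained by a local replacement at the star element $p$, are never constructed. You only list the three properties such a map would have to satisfy (well-definedness, preservation of dimension, injectivity) without establishing any of them, and it is precisely this uniform control over faces of all codimensions that is absent from the literature and keeps the statement --- a refinement of the Stanley--Hibi--Li conjecture, Conjecture~\ref{hl-conjecture}, itself open beyond facets and vertices even in the unmarked case --- at the level of a conjecture.

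That said, the parts of your plan that are carried out are sound and consistent with what the paper does prove. The reduction is valid: by Theorem~\ref{Thm:unimodular} the $f$-vector of $\mathcal{CO}_{U_1,U_2}(\blambda)$ depends only on $U_1\cap\St(P)$, and one can indeed realize the two prescribed star intersections by admissible decompositions differing in the single element $p$; since admissibility forces the order part to be an up-set of $P\setminus A$ and every star element above $p$ already lies in $V_1\cap\St(P)$ (it lies in $U_1\cap\St(P)$ and differs from $p$), taking $V_1'$ to be the up-closure of $(V_1\cap\St(P))\cup\{x\in P\setminus A \mid x\succ p\}$ and $U_1'=V_1'\cup\{p\}$ works. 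Your remark that the Ardila--Bliem--Salazar piecewise-linear transfer map cannot be used because it does not carry faces to faces is also correct, and your identification of the branching data $(|p\rightarrow|-1)(|\rightsquigarrow p|-1)$ as the source of the surplus matches the facet count in Corollary~\ref{Cor:SHLfacet}. In short: your assessment of the problem is accurate, but what you have written is a research program, not a proof, and it resolves nothing beyond the facet case already established in the paper.
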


We prove parts of the conjecture, namely we show the conjectured inequality for the number of facets (Corollary~\ref{Cor:SHLfacet}).
\par
We denote 
$$S_{U_1, U_2}(\blambda) = \mathcal{CO}_{U_1, U_2}(\blambda) \cap \bz_{\geq 0}^{|P\setminus A|}$$
the lattice points in the marked chain-order polytope.

\begin{thm}
Let $(P, A, \blambda)$ be a marked poset and $(U_1, U_2)$ an admissible decomposition. Then the marked chain-order polytope $\mathcal{CO}_{U_1, U_2}(\blambda)$ is a normal lattice polytope. 
Moreover, if $A$ is linearly ordered, then $S_{U_1, U_2}(\blambda_1 + \blambda_2) = S_{U_1, U_2}(\blambda_1)  + S_{U_1, U_2}(\blambda_2)$ for any markings $\blambda_1$ and $\blambda_2$.
\end{thm}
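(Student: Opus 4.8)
The plan is to deduce both assertions by transporting a \emph{canonical unimodular triangulation} from the marked order polytope to $\mathcal{CO}_{U_1, U_2}(\blambda)$ along the transfer map. Write $\Phi$ for the piecewise-linear map that fixes the coordinates indexed by $A\cup U_1$ and sends each $U_2$-coordinate from its order value to its chain value, so that $\Phi$ carries $\mathcal{O}_{P,A}(\blambda)$ (where every non-marked element is treated as an order element) onto $\mathcal{CO}_{U_1, U_2}(\blambda)$. Admissibility of $(U_1,U_2)$ is exactly what makes the chain coordinates on $U_2$ relative to the marked set $A\cup U_1$ well defined, and the marked analogue of Stanley's transfer map \cite{Sta86} shows that $\Phi$ is a homeomorphism restricting to a bijection on lattice points at every dilation. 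Since $\Phi$ is independent of $\blambda$, a single combinatorial object will serve both parts.

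For normality I take the canonical triangulation $\mathcal T$ of $\mathcal{O}_{P,A}(\blambda)$ whose maximal cells are the chambers $x_{p_1}\le x_{p_2}\le\cdots$ indexed by the linear extensions of $(P,\prec)$; these cells are unimodular. On a fixed chamber the maximum $\max_{q\lessdot p}x_q$ entering the definition of $\Phi$ is attained by a single lower cover $q$ of each $p\in U_2$, so there $\Phi$ restricts to an affine map whose linear part is integral and unipotent (a composition of shears $x_p\mapsto x_p-x_q$), hence of determinant $\pm1$. Thus $\Phi$ sends each cell of $\mathcal T$ to a unimodular simplex, and because $\Phi$ is a homeomorphism these images form a triangulation of $\mathcal{CO}_{U_1, U_2}(\blambda)$. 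A lattice polytope admitting a unimodular triangulation is normal, which settles the first claim for an arbitrary marking set $A$.

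For the second claim the inclusion $S_{U_1, U_2}(\blambda_1)+S_{U_1, U_2}(\blambda_2)\subseteq S_{U_1, U_2}(\blambda_1+\blambda_2)$ is immediate, since every defining inequality has the shape $(\text{linear in }x)\le(\text{linear in }\blambda)$, so the two systems add and a sum of lattice points is a lattice point. For the reverse inclusion I pass to the cone $\mathcal C=\{(\blambda,x):x\in\mathcal{CO}_{U_1, U_2}(\blambda)\}$ and grade the semigroup $\mathcal C\cap\bz^{|A|+|P\setminus A|}$ by the marking. When $A$ is linearly ordered the monoid of admissible markings is free on the fundamental markings $\omega_0,\dots,\omega_{|A|-1}$ with $\omega_k=(1,\dots,1,0,\dots,0)$, so by writing $\blambda_1,\blambda_2$ in this basis and iterating the two-term statement it suffices to show that this semigroup is generated by the fibres lying over the $\omega_k$. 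Extending $\Phi$ by the identity on the $\blambda$-coordinates and repeating the cellwise-unimodularity argument transports the canonical unimodular triangulation of the marked order cone to one of $\mathcal C$; the extreme rays of the marked order cone sit over the fundamental markings (the Hibi/Gelfand--Tsetlin generation in degree one, valid precisely because $A$ is linearly ordered), and $\Phi$ preserves both unimodularity and marking degree. Hence every lattice point of $\mathcal C$ is a nonnegative integral combination of rays over the $\omega_k$; collecting these rays according to their contribution to $\blambda_1$ and $\blambda_2$ yields the required splitting $z=z_1+z_2$.

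The main obstacle is concentrated in the cellwise analysis of $\Phi$. First, one must check that no cell of the canonical triangulation is broken by the piecewise-linear locus of $\Phi$ and that the resulting linear pieces are genuinely unimodular; this is the combinatorial heart and is exactly what admissibility is there to control. Second, one needs the base statement that the marked order cone for a linearly ordered $A$ is generated in fundamental-marking degree. The latter is where the hypothesis on $A$ is indispensable, since for a general marking set the marking monoid need not be free and generation in degree one can fail.
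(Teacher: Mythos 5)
Your strategy (transfer map plus transported unimodular triangulations) is genuinely different from the paper's, which never triangulates anything: the paper projects via $\pi_1$ onto the order coordinates, proves the required splitting of $S_{A_1,A}(N\blambda)$ there by a direct arithmetic argument, and handles the $U_2$-directions through the fibration of Lemmas~\ref{Lem:pr1} and \ref{Lem:Decomposition}, whose fibres are marked chain polytopes $\mathcal{C}_{P,A_1}(\blambda^{\bs})$, invoking the known Minkowski property of marked chain polytopes (Theorem~\ref{Thm:C/O}(2), from \cite{Fou15a}). Your route could in principle be made to work, but as written it has a genuine gap at exactly the place you call the combinatorial heart. The cells of the linear-extension subdivision of a \emph{marked} order polytope are not unimodular simplices --- in general not simplices at all --- once the markings are not all $0$ and $1$. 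For the chain $a\prec p\prec q\prec b$ with $\lambda_a=0$, $\lambda_b=2$ there is a single chamber $\{0\le x_p\le x_q\le 2\}$, a triangle of normalized volume $4$; with several interleaved marking values the chambers are products of dilated simplices. Worse, $\Phi$ need not even be affine on such a chamber: if $p\in U_2$ has lower covers $q\in U_2$ and $a\in A$, then $\Phi(x)_p=x_p-\max(x_q,\lambda_a)$, and a chamber defined only by inequalities among the $x$-coordinates does not decide the sign of $x_q-\lambda_a$. So the triangulation $\mathcal{T}$ you start from does not exist as described, and the normality argument collapses. A repair exists (marked order polytopes are alcoved, and the triangulation into unimodular alcoves cut out by integer translates of $x_p=c$ and $x_p-x_q=c$ refines the non-affineness locus of $\Phi$), but supplying that refinement and its compatibility with $\Phi$ is precisely the missing content, not a routine check.

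Two further points. First, your Minkowski argument is on safer ground than your normality argument, but for a reason you do not state: at the level of the cone, where the markings become coordinates, the linear-extension chambers \emph{are} unimodular simplicial cones whose rays are indicator functions of filters, and these restrict to the fundamental markings $\omega^i$ exactly because $A$ is a chain; your text instead justifies this half by ``repeating the cellwise-unimodularity argument,'' i.e.\ by the flawed polytope-level step. Second, you assert rather than prove that the mixed transfer map $\Phi$ (order values on $U_1$, chain values on $U_2$) is a homeomorphism of $\mathcal{O}_{P,A}(\blambda)$ onto $\mathcal{CO}_{U_1,U_2}(\blambda)$ restricting to a bijection on lattice points. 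This is not the statement of \cite{Sta86} or \cite{ABS11}, which treat the full order-to-chain case; in the paper the mixed statement is exactly the content of Lemmas~\ref{Lem:pr1} and \ref{Lem:Decomposition}, and the proof of Lemma~\ref{Lem:pr1} uses admissibility in a nontrivial way (one must exhibit a face of $\mathcal{O}_{P,A}(\blambda)$ whose $\pi_1$-image is all of $\pi_1(\mathcal{CO}_{U_1,U_2}(\blambda))$). Until both of these are filled in, the proposal does not constitute a proof.
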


Since $\mathcal{CO}_{U_1, U_2}(\blambda)$ is a lattice polytope, there exists a counting polynomial, the Ehrhart polynomial, whose value at $N\in\mathbb{N}$ gives the number of lattice points in $\mathcal{CO}_{U_1, U_2}(N\blambda)$.

\begin{thm}
Let $(P, A, \blambda)$ be a marked poset. Then all marked chain-order polytopes associated to admissible decompositions are Ehrhart equivalent, \emph{i.e.} they have the same Ehrhart polynomial.
\end{thm}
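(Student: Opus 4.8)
The plan is to construct, for every admissible decomposition, an explicit piecewise-linear bijection onto one fixed reference polytope — the marked order polytope $\mathcal{O}_{P,A}(\blambda) = \mathcal{CO}_{P\setminus A,\emptyset}(\blambda)$ — that preserves the integer lattice and commutes with the dilation $\blambda \mapsto N\blambda$. Since equality of Ehrhart polynomials is an equivalence relation, comparing each $\mathcal{CO}_{U_1,U_2}(\blambda)$ with this single reference and invoking transitivity then yields the claim for any pair of admissible decompositions.

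First I would reduce to a single elementary move. Whenever $U_2 \ne \emptyset$, pick $p \in U_2$ that is $\prec$-maximal among the elements of $U_2$; then every element covering $p$ lies in $A \cup U_1$, and one verifies at once that $(V_1,V_2) := (U_1 \cup \{p\},\, U_2\setminus\{p\})$ is again admissible. Repeating this $|U_2|$ times exhausts $U_2$ and lands on $\mathcal{O}_{P,A}(\blambda)$, so it is enough to handle one such transfer of a maximal element from the chain part to the order part.

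For the elementary step I would let $\Phi$ change only the coordinate indexed by $p$, keeping $\Phi(x)_q = x_q$ for $q \ne p$ and setting
$$\Phi(x)_p = x_p + \max_{\substack{c_0 \prec u_1 \prec \cdots \prec u_j \prec p \\ c_0 \in A\cup U_1,\ u_1,\ldots,u_j \in U_2}} \bigl( x_{c_0} + x_{u_1} + \cdots + x_{u_j}\bigr),$$
where $x_{c_0} = \lambda_{c_0}$ if $c_0 \in A$ and the maximum runs over saturated chains rising from $A \cup U_1$ through $U_2$ up to $p$. The maximand is a maximum of $\{0,1\}$-linear forms in the coordinates other than $x_p$, so on each region of linearity $\Phi$ has unitriangular Jacobian, in particular determinant $1$, and is homogeneous of degree one; hence $\Phi$ and its inverse are integral and commute with dilation.

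The heart of the argument, and the step I expect to be the main obstacle, is verifying that $\Phi$ carries the facet description of $\mathcal{CO}_{U_1,U_2}(\blambda)$ exactly onto that of $\mathcal{CO}_{V_1,V_2}(\blambda)$. Writing $M_p(x)$ for the displayed maximum, the constraint $x_p \ge 0$ turns into the lower bound $\Phi(x)_p \ge M_p(x)$, which is exactly the family of chain inequalities of $\mathcal{CO}_{V_1,V_2}$ that end at the now order-type element $p$; each chain inequality of $\mathcal{CO}_{U_1,U_2}$ passing through $p$ — necessarily of the form $c_0 \prec \cdots \prec p \prec c$ with $c$ an upper cover of $p$ in $A \cup U_1$ — becomes, after maximizing over the part below $p$, the order inequality $\Phi(x)_p \le x_c$; and every inequality not involving $p$ is literally unchanged, because the $\prec$-maximality of $p$ prevents it from lying in the interior of any other $U_2$-chain. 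Once this bookkeeping is in place, $\Phi$ is a bijection of the two polytopes restricting to a bijection of their lattice points at every dilation level, so the two Ehrhart polynomials coincide; composing the elementary transfers and using transitivity completes the proof.
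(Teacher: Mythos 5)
Your proof is correct in substance, but it takes a genuinely different route from the paper's. The paper never builds a bijection between the polytopes: it uses the projections $\pi_1,\pi_2$ and the decomposition property (Lemmas~\ref{Lem:pr1} and \ref{Lem:Decomposition}) to fiber both $\mathcal{CO}_{U_1,U_2}(n\blambda)$ and $\mathcal{O}_{P,A}(n\blambda)$ over the same marked order polytope $\mathcal{O}_{A_1,A}(n\blambda)$, where $A_1=A\cup U_1$; the fiber over a lattice point $\bs$ is the marked chain polytope $\mathcal{C}_{P,A_1}((n\blambda)^{\bs})$ in the first case and the marked order polytope $\mathcal{O}_{P,A_1}((n\blambda)^{\bs})$ in the second, so the lattice-point counts agree fiber by fiber by the Ardila--Bliem--Salazar theorem (Theorem~\ref{Thm:C/O}(1)), which is invoked as a black box. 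You instead iterate an elementary one-coordinate transfer map---your inverse $y_p\mapsto y_p-M_p(y)$ is exactly the ABS map of Remark~\ref{rem-abs} applied to the single coordinate $p$, with the remaining coordinates treated as markings---and so you obtain an explicit piecewise-linear, lattice-preserving bijection from any $\mathcal{CO}_{U_1,U_2}(\blambda)$ onto $\mathcal{O}_{P,A}(\blambda)$, compatible with dilation. This costs more work (the facet bookkeeping is the whole content, precisely the step the paper outsources to ABS), but it also buys more: your argument is self-contained, re-proving Theorem~\ref{Thm:C/O}(1) as the special case $(U_1,U_2)=(\emptyset,P\setminus A)$, and it shows the polytopes are piecewise-unimodularly equivalent at every dilation level, which is strictly stronger than Ehrhart equivalence.

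Two details in your bookkeeping need tightening, though neither is a conceptual gap. First, the maximum $M_p(x)$ should run over \emph{all} chains $c_0\prec u_1\prec\cdots\prec u_j\prec p$ with $c_0\in A\cup U_1$, $u_i\in U_2$, $j\geq 0$, not only saturated ones---otherwise, when $c_0\prec p$ has no saturated refinement staying inside $U_2$, you must separately argue that some saturated chain attains a value at least $\lambda_{c_0}$, which uses the order inequalities and can fail for markings that are not order-preserving on $A$; allowing all chains avoids this entirely, and the concatenation argument for $\Phi(x)_p\le x_c$ works verbatim for non-saturated chains. Second, the constraint $x_p\geq 0$ must transform not only into the chain inequalities of $\mathcal{CO}_{V_1,V_2}(\blambda)$ ending at $p$, as your prose states, but also into the \emph{order} inequalities $\lambda_{c_0}\le\Phi(x)_p$ for $c_0\in A\cup U_1$ with $c_0\prec p$ (these are the $j=0$ terms of the maximum, and they are not chain inequalities); your displayed formula produces them, but your accounting of the target facets omits them, and they are likewise what the inverse map must absorb.
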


\subsection*{Applications and conjectures.}
We turn back to the representation theory and the poset $P_n$. Starting from the marked order polytope, we have a poset of \textit{degenerations} of this polytope, whose most degenerated element is the marked chain polytope. 
This poset of degenerations conjecturally corresponds to the poset of  linearly degenerate flag varieties in the PBW locus and lattice points in the marked chain-order polytope parametrize a monomial basis of the degenerated module. This is shown to be true for the marked order polytope (\cite{GT50}) and the marked chain polytope (\cite{FFoL11a, FFoL13a}). It would be very interesting to understand how this fits into the more general framework of toric degenerations in \cite{FFL15, FFL16}. For more details on the linear degenerations of the flag varieties we refer to \cite{CIFFFR16}.
\par
In \emph{loc.cit} and  \cite{CIL14, CILL15}, the modules $V^{\mathbf{i}}(\lambda)$ have been identified with Demazure modules $V_{w^{\mathbf{i}}}(\blambda^{\mathbf{i}})$ of a certain $\lie{sl}_k$, where $w^{\mathbf{i}} \in \mathfrak{S}_{k}$ is an element in the Weyl group and $\blambda^{\mathbf{i}}$ is a weight obtained from $\lambda$.  Using the crystal graph and Kashiwara's root operators, Littelmann \cite{Lit98} has provided for any reduced expression $\underline{w}^{\mathbf{i}}$  in terms of simple reflections and any highest weight $\lambda^\mathbf{i}$ a convex polytope $Q_{\underline{w}^{\mathbf{i}}}(\blambda^{\mathbf{i}})$, the string polytope associated to the Demazure module $V_{w^{\mathbf{i}}}(\blambda^{\mathbf{i}})$, whose lattice points parametrize a monomial basis of the Demazure module.
\par
The marked order polytope is known to be a string polytope \cite{Lit98}; the marked chain polytope is also proved to be a string polytope \cite{FoL15}. We conjecture that any marked chain-order polytope (of the fixed poset $P_n$) is a string polytope corresponding to certain reduced expression and highest weight. This conjecture is true for $n \leq 5$ by direct verification using polymake (\cite{GJ97}).

\subsection*{Organization of paper}

The paper is organized as follows: in Section~\ref{sec2} we fix notations, introduce marked chain-order polytopes and recall results on marked chain and marked order polytopes. In Section~\ref{sec3} we provide the proofs on the normality and Minkowski property, while in Section~\ref{sec4} we prove the Ehrhart equivalence. Section~\ref{sec5} is on the number of facets and equivalent classes of polytopes; Section~\ref{sec6} gives the refinement of the Stanley-Hibi-Li conjecture. Finally, Section~\ref{sec7} gives the example of the Gelfand-Tsetlin poset.\\

\noindent
\textbf{Acknowledgements.}
The work of X.F. is supported by the Alexander von Humboldt
Foundation. The work of G.F. has been funded by the DFG priority program 1388 "Representation Theory". We would like to thank Katharina Jochemko, Christian Haase and Peter Littelmann for helpful and fruitful discussions and remarks.

\section{Definition}\label{sec2}

We recall here the definition of a marked poset, due to \cite{ABS11}, but set into a wider context. Throughout this paper, all posets are assumed to be finite. For a finite set $S$, we let $\#S$ or $|S|$ denote its cardinal.
\par
For the convenience of the reader, we recall some definitions on polytopes.

\subsection{Polytopes}
A \emph{polytope} is the convex hull of a finite number of points in some $\mathbb{R}^d$. Let $P$ be a polytope in $\mathbb{R}^d$. We recall several definitions around a polytope:
\begin{enumerate}
\item $P$ is called a \emph{lattice polytope} if its vertices have integral coordinates.
\item For $c\in\mathbb{R}$, the \emph{dilation} of the polytope by $c$ is $cP:=\{cx\mid x\in P\}$.
\item The \emph{lattice points} in $P$ are defined by $S(P):=P\cap\mathbb{Z}^d$. 
\item Let $P$, $Q$ be two polytopes in $\mathbb{R}^d$, the Minkowski sum of $P$ and $Q$ is defined by
$$P+Q=\{p+q\mid p\in P,\ q\in Q\}.$$
\item A lattice polytope $P$ is said to be \emph{normal}, if for any $n\in\mathbb{N}$, $S(nP)$ is the $n$-fold Minkowski sum of $S(P)$.
\end{enumerate}

For any lattice polytope $P$ there exists a polynomial $E_P(t)$, the \emph{Ehrhart polynomial} of $P$, satisfying: $\# S(nP)=E_P(n)$. Two lattice polytopes $P$ and $Q$ are called \emph{Ehrhart equivalent} if they have the same Ehrhart polynomial, consequently Ehrhart equivalent polytopes have the same number of lattice points.

\subsection{Notations on posets}
Let $(P, \prec) $ be a poset. For $p,q\in P$, we say \emph{$p$ covers $q$}, denoted by $q\ra p$, if $q\prec p$ and for any $r\in P$ satisfying $q\prec r\prec p$ we have $q=r$ or $r=p$. We denote $\ra p=\{q\in P\ |\ q\ra p\}$ the set of elements covered by $p$ and $p\ra =\{q\in P\ |\ p\ra q\}$ the set of elements covering $p$.
\par
For $p\prec q\in P$, a \emph{$(p,q)$-chain} is a sequence $q_1,q_2,\ldots,q_s\in P$ such that $p \prec q_1\prec q_2\prec\ldots\prec q_s\prec q$; it is called a \emph{maximal $(p,q)$-chain} if $p\ra q_1\ra q_2\ra\ldots\ra q_s\ra q$. We denote $p\rightsquigarrow$ (resp. $\rightsquigarrow p$) the set of maximal chains starting in $p$ (resp. ending in $p$).
 \begin{definition} An element $p\in P$ is called a \emph{star element} if $\# p\ra\, \geq 2$ and $\# \rightsquigarrow p \,\geq 2$. That is to say, there exist at least two distinct elements covering $p$ and two distinct maximal chains ending in $p$. The set of all star elements in $P$ is denoted by $\St(P)$.
\end{definition}

\begin{example}
Consider the following poset $P$ where $1$ and $2$ are minimal elements:\\
\begin{center}
\begin{tikzpicture}
  \node (a) at (-1,1) {$1$};
  \node (b) at (1,1) {$2$};
  \node (c) at (0,0) {$3$};
  \node (d) at (0,-1) {$4$};
  \node (e) at (1,-2) {$5$};
  \node (f) at (0,-2) {$6$};
  \node (g) at (-1,-3) {$7$};
  \node (h) at (0,-3) {$8$};
  \node (i) at (2,-3) {$9$};
  \draw (a) -- (c) -- (b) 
  (c) -- (d) -- (e) -- (i)
  (d) -- (f) -- (g) 
  (f) -- (h);
\end{tikzpicture}
\end{center}
Then $\St(P)=\{3\}$, $\ra 3=\{1,2\}$, $3\ra=\{4\}$ and
$$3\rightsquigarrow =\{3\prec 4\prec 5\prec 9,\ 3\prec 4\prec 6\prec 7,\ 3\prec 4\prec 6\prec 8\}.$$
\end{example}

\subsection{Admissible decompositions}
\begin{definition}
A \emph{decomposition} of $P$ is a pair of subsets $(U_1,U_2)$ of $P$ such that $U_1\cap U_2=\emptyset$ and $U_1\cup U_2=P$; it is called \emph{admissible} if furthermore there do not exist $u_1\in U_1$ and $u_2\in U_2$ such that $u_1\prec u_2$.
\end{definition}

\begin{example} Here are some examples:
\begin{enumerate}
\item $U_1 = P, U_2 = \emptyset$ or $U_1 = \emptyset, U_2=P$ are the both extremal cases.
\item In the example above, $U_1= \{4,7,8,9\}$, $U_2 = \{ 1,2,3,6,5\}, $ is a decomposition but it is not admissible, since $ 4\prec 5$.
\end{enumerate}
For a linearly ordered poset $P$, the number of different admissible decompositions is $\#P+1$.
\end{example}
We can associate to any decomposition $(U_1, U_2)$ a polyhedral cone $\Cone(U_1, U_2) $:
\begin{small}
$$
\left\{ (x_p) \in \mathbb{R}^{|P|} \mid \begin{cases} x_p \geq 0, &\text{for } p \in U_2; \\ x_p \leq x_q, \; & \text{for all } p \prec q \in U_1; \\ x_{p_1} + \ldots + x_{p_s} \leq x_q, \; & \text{for all } \, p_1 \prec  \ldots \prec p_s \prec q,\ q \in U_1,\ p_i \in U_2.  \end{cases}\right\}
$$
\end{small}

\subsection{Marked posets and associated polytopes}
An element in $P$ is called \emph{extremal} if it is either a maximal or a minimal element. Let $A$ be a subset of $P$ containing at least all extremal elements of $P$. A \emph{marking} of $A$ is a map $\blambda:A\ra\mathbb{Z}_{\geq 0}$, it can be looked as a vector in $\mathbb{Z}_{\geq 0}^{|A|}$; for $a\in A$, we will denote $\blambda_a:=\blambda(a)$. A \emph{marked poset} (\cite{ABS11}) is such a triple $(P, A, \blambda)$.
\par
A decomposition (resp. an admissible decomposition) of the marked poset $(P,A,\blambda)$ is a decomposition (resp. an admissible decomposition) of the subposet $P\backslash A$.
\par
A \emph{$U_2$-chain} in $P$ is a set $\mathbf{c} = \{a, p_1, \ldots, p_n, b\}$ with $n \geq 1$, $a,b \in A \cup U_1$ and $p_i \in U_2$ such that
$$ b\prec p_n\prec \ldots\prec p_1\prec a. $$
We denote by $\mathbb{D}_{U_2}$ the set of all $U_2$-chains. For a $U_2$-chain $\mathbf{p} \in \mathbb{D}_{U_2}$, we denote $\mathbf{p}_a$ the maximal element in the chain and $\mathbf{p}_b$ the minimal element in the chain, so $\mathbf{p}_a, \mathbf{p}_b \in A\cup U_1$.
\par
Although some definitions and results in this paper remain to be true for an arbitrary decomposition $(U_1,U_2)$ of $(P,A,\blambda)$, we restrict ourselves to the admissible case.

\begin{definition} \label{def:chain-order}
Let $(P, A, \blambda)$ be a marked poset and $(U_1, U_2)$ be an admissible decomposition. We associate to it the \textit{marked chain-order polytope} $\mathcal{CO}_{U_1, U_2}(\blambda)$:
\begin{small}
$$
\left\{ (x_p) \in \Cone(U_1, U_2)\mid 
\begin{cases}  
\text{ for } p\prec a,\ p\in U_1,\ a\in A: &  x_p \leq \blambda_a; \\ 
\text{ for } b\prec q,\ q\in U_1,\ b\in A: &  \blambda_b \leq x_q; \\ 
\text{ for } p\prec q \in U_1: &  x_p \leq x_q; \\ 
\text{ for } p \in U_2: &  x_p \geq 0; \\
\text{ for any} \; \mathbf{p} \in \mathbb{D}_{U_2}: & \sum_{q \in \mathbf{p} \cap U_2} x_q \leq \blambda_{\mathbf{p}_a} - \blambda_{\mathbf{p}_b},
\end{cases} \right\}
$$
where $\lambda_q := x_q$ if $q \in U_1$ in the last inequality. We further denote the set of lattice points in this polytope
$$
S_{U_1, U_2}(\blambda) := \mathcal{CO}_{U_1, U_2}(\blambda)  \cap \bz_{\geq 0}^{|P\setminus A|}.
$$
\end{small}
\end{definition}
\begin{example}
Consider the following poset $P$ and $A=\{p_{0,0},p_{0,1},p_{0,2}\}$. We fix a marking $\blambda:A\ra\mathbb{Z}_{\geq 0}$ by: $\blambda(p_{0,0})=\blambda_0$, $\blambda(p_{0,1})=\blambda_1$, $\blambda(p_{0,2})=\blambda_2$.
\\
\begin{center}
\begin{tikzpicture}
  \node (a) at (0,0) {$p_{0,0}$};
  \node (b) at (0,-2) {$p_{0,1}$};
  \node (c) at (0,-4) {$p_{0,2}$};
  \node (f) at (1,-1) {$p_{1,1}$};
  \node (g) at (1,-3) {$p_{1,2}$};
  \node (j) at (2,-2) {$p_{2,2}$};

  \draw (a) -- (f) -- (j) 
  (b) -- (g)
  (b) -- (f)
  (c) -- (g) -- (j);
\end{tikzpicture}
\end{center}

Suppose $U_1 = \{p_{1,1}\}$ and $U_2= \{p_{1,2}, p_{2,2}\}$, then the marked chain-order polytope $\mathcal{CO}_{U_1, U_2}(\blambda)$ is defined by the following inequalities (we set $x_{i,j} := x_{p_{i,j}}$):
\[
 x_{1,2} \geq 0, \,   \,x_{2,2} \geq 0,
\]
\[ \lambda_0 \geq x_{1,1} \geq \lambda_1, \,   \,\lambda_1 - \lambda_2 \geq x_{1,2}, \,   \, x_{1,1} - \lambda_2 \geq x_{2,2} + x_{1,2}.
\]
\end{example}
\begin{remark}
These are special cases of layered marked poset polytopes which will be studied in a forthcoming publication \cite{FF15}, where the condition that the decomposition $(U_1, U_2)$ has to be admissible will be dropped.
\end{remark}

\begin{remark}\label{Rmk:RationalVertex}
Since $\blambda\in\mathbb{Z}_{\geq 0}^{|A|}$, the polytope $\mathcal{CO}_{U_1,U_2}(\blambda)$ is defined by inequalities with integer coefficients. It implies that its vertices have rational coordinates.
\end{remark}

\begin{remark}\label{remHLLMT}
These are \emph{not} order-chain polytopes introduced by Hibi \emph{et al.} \cite{HLLMT15}. The main difference is in the definition, we are considering decompositions of the set of vertices in the Hasse diagram while in \cite{HLLMT15}, the authors considered a decomposition of the set of edges. Here is an example to see the difference; consider the following marked poset:
\begin{center}
\begin{tikzpicture}
  \node (a) at (0,2) {$1$};
  \node (b) at (0,1) {$p$};
  \node (c) at (0,0) {$q$};
  \node (d) at (0,-1) {$r$};
  \node (e) at (0,-2) {$0$};

  \draw (a) -- (b) -- (c)-- (d) -- (e);
\end{tikzpicture}
\end{center}
For the chain-order polytope one has to fix a decomposition of the vertices, for example $U_1 = \{p\}, U_2 = \{q,r\}$. Then the chain-order polytope is defined by the inequalities
$$
x_q\geq 0,\ x_r\geq 0,\ x_p\leq 1,\ x_q+x_r\leq x_p.
$$
Then there is no order-chain polytope that is defined by the same hyperplanes. For example if we decompose $cE = \{ (q,r)\}$ and $oE = \{ (p,q) \}$. Then the order-chain polytope is defined by the inequalities
$$
x_q\geq 0,\ x_r\geq 0,\ x_p\leq 1,\ x_q+x_r\leq 1,\ x_q\leq x_p.
$$
In fact, these two polytopes are fundamentally differently defined, so there is no easy way to compare those. But important to notice here is, that our results are not valid for the order-chain polytopes, for example, they are not Ehrhart equivalent in general.
\end{remark}

\begin{example}\label{Ex:extremal}
We consider in this example two extremal cases:
\begin{enumerate}
\item $(U_1,U_2)= (P \setminus A,\emptyset)$: the corresponding marked chain-order polytope is the marked order polytope (\cite{ABS11}) and we will denote it by $\mathcal{O}_{P,A}(\blambda)$.
\item $(U_1,U_2)= (\emptyset,P \setminus A)$: the corresponding marked chain-order polytope is the marked chain polytope (\cite{ABS11}) and we will denote it by $\mathcal{C}_{P, A}(\blambda)$.
\end{enumerate}
\end{example}

These polytopes were defined and studied in \cite{ABS11} generalizing the order and chain polytopes, defined in \cite{Sta86}. Order and chain polytopes are considered as the special case where vertices in $A$ are marked only by $0$ and $1$. 
\par
The marked order and marked chain polytopes were originally introduced in the context of representation theory of semi-simple Lie algebras. We will see how the marked chain-order polytopes fit into this context and how again, representation theory motivates most of the questions we are answering in this paper.

\subsection{Properties of order and chain polytopes}
Let $(P,A,\blambda)$ be a marked poset, $\mathcal{O}_{P,A}(\blambda)$ be the associated marked order polytope and $\mathcal{C}_{P,A}(\blambda)$ be the marked chain polytope. Notice that by Example \ref{Ex:extremal},
$$S(\mathcal{O}_{P,A}(\blambda))=S_{P\setminus A,\emptyset}(\lambda)\ \text{ and }\ S(\mathcal{C}_{P,A}(\blambda))=S_{\emptyset,P\setminus A}(\lambda).$$
\par
We list some combinatorial properties of these polytopes.
\begin{theorem}\label{Thm:C/O}
The following statements hold.
\begin{enumerate}
\item (\cite{ABS11}) The marked order polytope $\mathcal{O}_{P,A}(\blambda)$ and the marked chain polytope $\mathcal{C}_{P,A}(\blambda)$ are Ehrhart equivalent.
\item (\cite{Fou15a}) Let $\blambda, \bmu \in \bz_{\geq 0}^{|A|}$ be two markings. Then we have the following \textit{Minkowski sum property} for the marked chain polytope:
$$
S_{\emptyset, P \setminus A}( \blambda) + S_{\emptyset, P \setminus A}( \bmu)  = S_{\emptyset, P \setminus A}( \blambda + \bmu). 
$$
\end{enumerate}
\end{theorem}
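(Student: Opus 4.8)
The plan is to connect any two admissible decompositions by a sequence of elementary moves, each toggling a single element between $U_1$ and $U_2$, and to exhibit for every such move an explicit lattice-preserving piecewise-linear bijection between the two polytopes. Admissible decompositions of $(P,A,\blambda)$ are in bijection with the order ideals $U_2$ of the subposet $P\setminus A$ (admissibility says precisely that no element of $U_1$ lies below an element of $U_2$, i.e.\ that $U_2$ is down-closed), so they form a distributive lattice whose Hasse edges add or remove a single element. In particular one may reach the marked order polytope $(P\setminus A,\emptyset)$ from an arbitrary $(U_1,U_2)$ by repeatedly moving a maximal element of $U_2$ into $U_1$; hence it suffices to treat one elementary move and then invoke transitivity of Ehrhart equivalence (the extremal cases recovering Theorem~\ref{Thm:C/O}(1)).

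For the elementary move I would fix $p$ minimal in $U_1$ and set $U_1' = U_1\setminus\{p\}$, $U_2'=U_2\cup\{p\}$; minimality of $p$ is exactly what keeps $(U_1',U_2')$ admissible. Define the transfer map $\Phi\colon \mathcal{CO}_{U_1,U_2}(\blambda)\to\mathcal{CO}_{U_1',U_2'}(\blambda)$ fixing every coordinate except $x_p$, which is replaced by its distance to the greatest effective lower bound:
$$
\Phi(x)_p = x_p - L_p(x),\qquad
L_p(x) := \max\Big(\{\blambda_b : b\in A,\ b\prec p\}\cup\big\{\blambda_{\mathbf{p}_b} + \textstyle\sum_{q\in \mathbf{p}\cap U_2} x_q : \mathbf{p}\in\mathbb{D}_{U_2},\ \mathbf{p}_a = p\big\}\Big),
$$
where $L_p$ involves only the coordinates other than $x_p$. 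On each maximal region where the $\max$ is attained by a fixed chain, $\Phi$ is a unimodular integer shear, so it is continuous, piecewise-linear, preserves $\bz^{|P\setminus A|}$, and commutes with dilation (all defining inequalities being homogeneous of degree one in $(x,\blambda)$, so that $\mathcal{CO}_{U_1,U_2}(N\blambda)=N\,\mathcal{CO}_{U_1,U_2}(\blambda)$). Its inverse is the reverse toggle $y\mapsto y_p + L_p(y)$. Granting that $\Phi$ carries one polytope onto the other, it restricts to a bijection $S_{U_1,U_2}(N\blambda)\to S_{U_1',U_2'}(N\blambda)$ for every $N$, whence the two Ehrhart polynomials agree.

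The technical heart---and the step I expect to be the main obstacle---is checking that $\Phi$ really maps $\mathcal{CO}_{U_1,U_2}(\blambda)$ onto $\mathcal{CO}_{U_1',U_2'}(\blambda)$, i.e.\ that the two inequality systems correspond under $\Phi$, and admissibility is what makes this work. I would use it twice. First, since no element of $U_1$ lies below an element of $U_2$, in the source every $U_2$-chain has its bottom endpoint in $A$, and $p$ can occur only as the top $\mathbf{p}_a$ of a $U_2$-chain, never as an interior or bottom vertex; dually no $U_2$-element sits above $p$. Consequently, after the move, every $U_2'$-chain through $p$ decomposes uniquely as a source $U_2$-chain topped at $p$ followed by a single step $p\prec a'$ with $a'\in A\cup U_1'$. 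I would then match inequalities directly: the condition $\Phi(x)_p\ge 0$ is exactly the conjunction of the source lower bounds on $x_p$; each target chain inequality through $p$ telescopes, via $\sum_{q\in \mathbf{p}\cap U_2} x_q \le L_p(x)-\blambda_{b'}$ together with $\Phi(x)_p = x_p - L_p(x)$, into the old bound $x_p\le \blambda_{a'}$ (with the convention $\blambda_{a'}=x_{a'}$ for $a'\in U_1'$), which is an order or marking inequality of the source; and every inequality not involving $p$ is preserved verbatim, the relevant chains being common to both decompositions. Running the identical computation for the reverse toggle shows $\Phi$ is onto, completing the elementary step, and concatenating moves yields the theorem.
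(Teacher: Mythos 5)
Your proposal proves only half of the statement. Theorem~\ref{Thm:C/O} has two items, and item (2) --- the Minkowski sum property $S_{\emptyset, P \setminus A}(\blambda) + S_{\emptyset, P \setminus A}(\bmu) = S_{\emptyset, P \setminus A}(\blambda + \bmu)$ --- is never addressed in your write-up. It also cannot be extracted from what you construct: your transfer maps are \emph{piecewise}-linear bijections, and such maps commute with dilation (hence transport Ehrhart data) but do not respect addition of points, so they cannot convert a splitting statement about lattice points of $\mathcal{C}_{P,A}(\blambda+\bmu)$ into anything about $\mathcal{C}_{P,A}(\blambda)$ and $\mathcal{C}_{P,A}(\bmu)$. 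Nor is item (2) a formal consequence of item (1): the inclusion $S(\blambda)+S(\bmu)\subseteq S(\blambda+\bmu)$ is trivial for both the marked chain and the marked order polytope, but the reverse inclusion genuinely fails for marked order polytopes in general, even though they are Ehrhart equivalent to the chain ones --- this is exactly why the paper's Theorem~\ref{Thm:Minkowski} needs the extra hypothesis that $A$ be linearly ordered. So item (2) requires a separate argument (in \cite{Fou15a} it is an explicit decomposition of a lattice point of the dilated chain polytope), and omitting it is a genuine gap, not a cosmetic one.

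On item (1) your route is correct and is genuinely different from the source the paper relies on. The paper does not prove Theorem~\ref{Thm:C/O} at all; it quotes (1) from \cite{ABS11}, whose proof is a single global transfer map $\varphi(x)_p = \min\{x_p - x_q,\, x_p - \lambda_q\}$ recalled in Remark~\ref{rem-abs}, carrying the marked order polytope onto the marked chain polytope in one step. You instead factor the passage through the lattice of admissible decompositions (order ideals $U_2$ of $P\setminus A$), toggling one minimal element of $U_1$ at a time via a one-coordinate shear $x_p \mapsto x_p - L_p(x)$. The verifications you sketch do go through: admissibility forces every $U_2$-chain to have its bottom in $A$ and forces $p$ to occur only as the top of a chain, the target chain inequalities through $p$ telescope to the source order/marking bounds $x_p \leq \blambda_{a'}$ (resp.\ $x_p\le x_{a'}$), and the reverse toggle $y_p \mapsto y_p + L_p(y)$ gives the inverse. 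Note that this actually establishes the stronger Theorem~\ref{Thm:Ehrhart} without using (1) as input, whereas the paper derives Theorem~\ref{Thm:Ehrhart} from (1) by a fiberwise reduction (Lemmas~\ref{Lem:pr1} and~\ref{Lem:Decomposition}); so your argument would make that part of the paper self-contained. But as a proof of Theorem~\ref{Thm:C/O} as stated, the proposal is incomplete until item (2) is supplied.
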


\subsection{Basic properties of chain-order polytopes}

We suppose in this subsection that $(P,A,\blambda)$ is a marked poset with an admissible decomposition $(U_1,U_2)$.

\begin{proposition}\label{Prop:COdialation}
For $n\in\mathbb{N}$, we have $n\,\mathcal{CO}_{U_1,U_2}(\blambda)=\mathcal{CO}_{U_1,U_2}(n\blambda)$.
\end{proposition}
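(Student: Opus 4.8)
The plan is to exploit the fact that every inequality defining $\mathcal{CO}_{U_1,U_2}(\blambda)$ is a \emph{homogeneous} linear relation in the combined vector $(x,\blambda)\in\mathbb{R}^{|P\setminus A|}\times\mathbb{R}^{|A|}$, carrying no constant term. Granting this, the dilation identity is immediate from scaling $x$ and $\blambda$ together by the same positive factor.

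First I would inspect the families of inequalities in Definition~\ref{def:chain-order} one by one and record each in the form $\ell(x,\blambda)\le 0$ for a linear form $\ell$ with zero constant term. The conditions $x_p\ge 0$, $x_p\le x_q$ and $x_{p_1}+\cdots+x_{p_s}\le x_q$ coming from $\Cone(U_1,U_2)$ involve only the $x$-coordinates and are visibly homogeneous; the marking conditions $x_p\le\blambda_a$ and $\blambda_b\le x_q$ are differences of a coordinate and a marking, hence homogeneous of degree one in $(x,\blambda)$; and the $U_2$-chain condition $\sum_{q\in\mathbf{p}\cap U_2}x_q\le\blambda_{\mathbf{p}_a}-\blambda_{\mathbf{p}_b}$ is likewise a linear form without constant term. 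The one point that needs a moment of care is the convention $\lambda_q:=x_q$ when $\mathbf{p}_a$ or $\mathbf{p}_b$ lies in $U_1$: in that case the right-hand ``marking'' is actually a coordinate, but this only substitutes one homogeneous term for another, so the relation stays homogeneous of degree one.

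Collecting these into a single vector-valued linear form $H(x,\blambda)$, so that $x\in\mathcal{CO}_{U_1,U_2}(\blambda)$ exactly when $H(x,\blambda)\le 0$ componentwise, I would then argue on membership. By homogeneity, $H(nx,n\blambda)=n\,H(x,\blambda)$, and since $n\in\mathbb{N}$ is positive this yields $H(x,\blambda)\le 0$ if and only if $H(nx,n\blambda)\le 0$. Hence $x\in\mathcal{CO}_{U_1,U_2}(\blambda)$ if and only if $nx\in\mathcal{CO}_{U_1,U_2}(n\blambda)$. Taking the image of $\mathcal{CO}_{U_1,U_2}(\blambda)$ under the map $x\mapsto nx$ then gives $n\,\mathcal{CO}_{U_1,U_2}(\blambda)=\mathcal{CO}_{U_1,U_2}(n\blambda)$, as desired.

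There is essentially no serious obstacle here; the whole content is the observation that the defining system has no constant terms, which is precisely what allows $\blambda$ and $x$ to be dilated in lockstep. The only bookkeeping worth stating explicitly is the homogeneity check for the $U_2$-chain inequalities under the $\lambda_q:=x_q$ convention, since that is the one place where a marking and a coordinate variable can appear together in the same inequality.
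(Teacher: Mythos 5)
Your proof is correct and is essentially the paper's own argument: the paper's one-line proof observes that every defining inequality of $\mathcal{CO}_{U_1,U_2}(\blambda)$ is linear in the markings $\blambda_a$ (and in the coordinates $x_p$), which is exactly the homogeneity in $(x,\blambda)$ that you verify in detail, including the correct handling of the $\lambda_q:=x_q$ convention for $U_2$-chains ending or starting in $U_1$.
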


\begin{proof}
It suffices to notice that in the definition of a chain-order polytope, all inequalities contain only linear terms on $\blambda_a$ for $a\in A$.
\end{proof}

We consider two projections of a chain-order polytope, \emph{e.g.} the following diagram:
\[
\xymatrix{
& \mathbb{R}^{|P\setminus A|}\ar[ld]_-{\pi_1} \ar[rd]^-{\pi_2} & \\
\mathbb{R}^{|U_1|} & & \mathbb{R}^{|U_2|}
}
\]
where $\pi_1$ and $\pi_2$ are linear projections onto the coordinates in $U_1$ and $U_2$, respectively. 
Given a polytope $Q \subset  \mathbb{R}^{|P\setminus A|}$, we consider its set of lattice points $S(Q)$. For every $\mathbf{p} \in S(Q)$, we can associate to it the set $\pi_2 \circ \pi_1^{-1} \circ \pi_1(\mathbf{p})$ (\emph{i.e.}, the $\pi_2$ image of the $\pi_1$ fiber of $\pi_1(\mathbf{p}))$.
 
\begin{definition}
The polytope $Q$ is said to have the \textit{decomposition property} with respect to $(U_1, U_2)$ if 
$$
S(Q) = \bigcup_{\mathbf{p} \in S(Q)} \{ (\pi_1(\mathbf{p}), \mathbf{q})\in\mathbb{R}^{|U_1|}\times \mathbb{R}^{|U_2|} \mid \mathbf{q} \in \pi_2 \circ \pi_1^{-1} \circ \pi_1(\mathbf{p}) \}
$$
\end{definition}
We apply these projections to the marked order, marked chain and marked chain-order polytopes. If we denote $A_1:=A\cup U_1$ and $A_2:=A\cup U_2$, then the order polytope $\mathcal{O}_{A_1,A}(\blambda)$ is a polytope in $\mathbb{R}^{|U_1|}$.

\begin{lemma}\label{Lem:pr1}
We have
$$\pi_1(\mathcal{O}_{P,A}(\blambda))=\mathcal{O}_{A_1,A}(\blambda)=\pi_1(\mathcal{CO}_{U_1,U_2}(\blambda)).$$
\end{lemma}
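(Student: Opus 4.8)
The statement is a chain of two set equalities, and the plan is to establish each of them by a double inclusion. In both cases the forward inclusion is immediate, and the content lies in extending a point given on the $U_1$-coordinates to a point of the larger polytope; I would produce two different explicit extensions, one for each equality.

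For the equality $\pi_1(\mathcal{O}_{P,A}(\blambda))=\mathcal{O}_{A_1,A}(\blambda)$, the inclusion ``$\subseteq$'' is clear, since every defining inequality of $\mathcal{O}_{A_1,A}(\blambda)$ is an order relation between two elements of $A_1=A\cup U_1$ and is therefore among the inequalities defining $\mathcal{O}_{P,A}(\blambda)$. For ``$\supseteq$'' I would take $(x_p)_{p\in U_1}\in\mathcal{O}_{A_1,A}(\blambda)$, read it together with the markings as an order-preserving assignment $\tilde{x}$ on $A_1$, and extend it to $U_2$ by the minimal rule, processing $U_2$ from the bottom in a linear extension and setting
$$x_u=\max\{\tilde{x}_p\mid p\ra u\}\qquad(u\in U_2).$$
This is well defined because $u\in U_2$ is not minimal in $P$, so it has a lower cover, and by admissibility none of its lower covers lies in $U_1$; thus every $\tilde{x}_p$ on the right is a marking or an already-assigned $U_2$-value. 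I would then verify that $\tilde{x}$ is order-preserving across every cover of $P$. Covers ending in $U_2$ hold by construction and covers internal to $A_1$ hold since we started inside $\mathcal{O}_{A_1,A}(\blambda)$; the only real point is a cover $u\ra q$ with $u\in U_2$ and $q\in A\cup U_1$: unwinding the recursion gives $x_u=\blambda_a$ for some $a\in A$ joined to $u$ by a saturated chain through $U_2$, whence $a\prec q$ inside $A_1$ and $\blambda_a\le\tilde{x}_q$ already holds. (Alternatively one can eliminate the $U_2$-variables by Fourier--Motzkin and note that each pairing of a bound $\tilde{x}_p\le x_u$ with a bound $x_u\le\tilde{x}_r$ yields $\tilde{x}_p\le\tilde{x}_r$, which is redundant by transitivity, so projecting out $U_2$ creates no new inequality.)

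For the equality $\mathcal{O}_{A_1,A}(\blambda)=\pi_1(\mathcal{CO}_{U_1,U_2}(\blambda))$, the inclusion ``$\supseteq$'' is again immediate: the $U_1$-coordinates of any point of $\mathcal{CO}_{U_1,U_2}(\blambda)$ satisfy the three order-type families of inequalities of Definition~\ref{def:chain-order}, which are exactly the order relations on $A_1$, so the projection lands in $\mathcal{O}_{A_1,A}(\blambda)$. For ``$\subseteq$'' I would extend a point of $\mathcal{O}_{A_1,A}(\blambda)$ by the trivial assignment $x_u=0$ for all $u\in U_2$. This satisfies the nonnegativity conditions, and each $U_2$-chain inequality collapses to $0\le\blambda_{\mathbf{p}_a}-\blambda_{\mathbf{p}_b}$; since $\mathbf{p}_b\prec\mathbf{p}_a$ with both endpoints in $A_1$, this is just an order relation valid on $\mathcal{O}_{A_1,A}(\blambda)$. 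Hence the extended point lies in $\mathcal{CO}_{U_1,U_2}(\blambda)$ and projects onto the given one.

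The main obstacle is the reverse inclusion of the first equality, namely checking that the minimal extension respects the upper order bounds coming from $A\cup U_1$. This is precisely where admissibility (no element of $U_1$ lies below an element of $U_2$) and transitivity of $\prec$ combine, guaranteeing both that the recursive rule stays inside $A\cup U_2$ and that eliminating the $U_2$-coordinates introduces no new facet on the $U_1$-coordinates; the second extension, by contrast, is essentially formal.
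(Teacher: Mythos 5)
Your proof is correct and takes essentially the same approach as the paper: both arguments reduce each equality to the easy inclusion obtained by dropping inequalities, plus an explicit section, where the paper uses the face of $\mathcal{O}_{P,A}(\blambda)$ cut out by $x_p=\max\{\blambda_a\mid a\in A,\ a\prec p\}$ for $p\in U_2$ (the same greedy minimal extension you construct recursively over lower covers, relying on admissibility in the same way) and implicitly the zero extension into $\mathcal{CO}_{U_1,U_2}(\blambda)$, which it declares ``clear by definition.'' If anything, you spell out details the paper leaves implicit.
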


\begin{proof}
We first show that $\pi_1(\mathcal{O}_{P,A}(\blambda))=\pi_1(\mathcal{CO}_{U_1,U_2}(\blambda))$.
By definition, $ \pi_1(\mathcal{CO}_{U_1, U_2}(\blambda))$ is the set
$$
 \left\{ (x_p) \in \mathbb{R}^{|U_1|} \mid \begin{cases}  
\text{ for } p\prec a,\ p\in U_1,\ a\in A: &  x_p \leq \blambda_a \\ 
\text{ for } b\prec q,\ q\in U_1,\ b\in A: &  \blambda_b \leq x_q \\ 
\text{ for } p\prec q \in U_1: &  x_p \leq x_q  
\end{cases} \right\}.$$
We see by the defining relations that $\pi_1(\mathcal{O}_{P,A}(\blambda))$ is also contained in the set above. It is left to prove the equality here, for this we provide a face in $\pi_1(\mathcal{O}_{P,A}(\blambda))$ which is the preimage of $ \pi_1(\mathcal{CO}_{U_1,U_2}(\blambda)) $. For this, we consider the set $\mathcal{F}$ of $\mathcal{O}_{P,A}(\blambda)$ defined by setting for all $p \in U_2$:
$$
x_p = \operatorname{max }\ \{ \lambda_a \mid a\in A\ \text{such that}\ a\prec p \}.
$$
Since the decomposition $(U_1,U_2)$ is admissible, the set $\mathcal{F}$ is non-empty and hence a face.
Then we have $\pi_1 (\mathcal{F}) =  \pi_1(\mathcal{CO}_{U_1,U_2}(\blambda))$, which shows the equality.
\par
The identity $\mathcal{O}_{A_1,A}(\blambda)=\pi_1(\mathcal{CO}_{U_1,U_2}(\blambda))$ is clear by definition.
\end{proof}

We turn to study the projection $\pi_2$. Let $\mathbf{s}\in\mathcal{O}_{P,A}(\blambda)$ be a lattice point. It provides another marked poset $(P,A_1,\blambda^{\bs})$ where 
$$\blambda^{\bs}(p) = \begin{cases} \blambda_p, &  \text{if }\ p\in A; \\ \bs_p, & \text{if }\ p\in U_1. \end{cases}$$
Notice that if $\bs,\bt\in\mathcal{O}_{P,A}(\blambda)$ such that $\pi_1(\bs)=\pi_1(\bt)$, then $\blambda^\bs=\blambda^\bt$.
\par
Let $\mathcal{O}_{P,A_1}(\blambda^\bs)$ and $\mathcal{C}_{P,A_1}(\blambda^\bs)$ be the marked order and marked chain polytopes associated to $(P,A_1,\blambda^\bs)$. The following lemma is clear by definition.

\begin{lemma}\label{Lem:Decomposition}
The following identities hold:
\begin{enumerate}
\item for $\bs\in\mathcal{O}_{P,A}(\blambda)$, 
$$\pi_2\circ \pi_1^{-1}\circ \pi_1(\bs)=\mathcal{O}_{P,A_1}(\blambda^\bs);$$
\item for $\bs\in\mathcal{CO}_{U_1,U_2}(\blambda)$, 
$$\pi_2\circ \pi_1^{-1}\circ \pi_1(\bs)=\mathcal{C}_{P,A_1}(\blambda^\bs).$$
\end{enumerate}
\end{lemma}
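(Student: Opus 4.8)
The plan is to prove both identities by directly matching the systems of defining inequalities on the two sides, the point being that imposing the fiber condition $x_u=\bs_u$ for all $u\in U_1$ converts the inequalities of the ambient polytope into those of a marked polytope over the enlarged marking set $A_1=A\cup U_1$ with marking $\blambda^\bs$. Note first that $P\setminus A_1=U_2$, so both $\mathcal{O}_{P,A_1}(\blambda^\bs)$ and $\mathcal{C}_{P,A_1}(\blambda^\bs)$ live in $\mathbb{R}^{|U_2|}$, matching the target of $\pi_2$; and recall that $\pi_1^{-1}\circ\pi_1(\bs)$ is the fiber taken \emph{inside} the ambient polytope, i.e. the set of its points whose $U_1$-coordinates equal those of $\bs$. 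For each of the two set equalities I would check both inclusions simultaneously by observing that the inequality systems literally coincide: a point $\mathbf{q}\in\mathbb{R}^{|U_2|}$ lies in the target polytope if and only if $(\bs|_{U_1},\mathbf{q})$ lies in the ambient polytope, since the inequalities of the ambient polytope that do not involve $U_2$ only see the fixed $A$- and $U_1$-coordinates and are already satisfied by $\bs$.

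For statement (1) the ambient polytope is $\mathcal{O}_{P,A}(\blambda)$, whose defining inequalities are exactly the relations $x_p\le x_q$ for cover relations $p\prec q$ in $P$, under the convention $x_a=\blambda_a$ for $a\in A$. First I would sort these relations according to the location of the two endpoints in the partition $A\sqcup U_1\sqcup U_2$. After fixing $x_u=\bs_u$ for $u\in U_1$, every relation with both endpoints in $A\cup U_1$ becomes constant and holds because $\bs\in\mathcal{O}_{P,A}(\blambda)$, and it imposes no condition on the $U_2$-coordinates. Each remaining relation has at least one endpoint in $U_2$; writing $\blambda^\bs_a=\blambda_a$ for $a\in A$ and $\blambda^\bs_u=\bs_u$ for $u\in U_1$, these are precisely the order inequalities defining $\mathcal{O}_{P,A_1}(\blambda^\bs)$. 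This yields the equality in (1); here admissibility is not even needed, as the matching is just the order-polytope bookkeeping of fixing some free coordinates.

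For statement (2) the ambient polytope is $\mathcal{CO}_{U_1,U_2}(\blambda)$. Imposing again $x_u=\bs_u$ for $u\in U_1$, the inequalities constraining only $U_1$-coordinates are satisfied because $\pi_1(\bs)\in\mathcal{O}_{A_1,A}(\blambda)$ by Lemma~\ref{Lem:pr1}, and they do not touch the $U_2$-coordinates. The inequalities that do constrain $U_2$ are the nonnegativity conditions $x_p\ge 0$ for $p\in U_2$ together with the $U_2$-chain inequalities $\sum_{q\in\mathbf{p}\cap U_2}x_q\le\blambda_{\mathbf{p}_a}-\blambda_{\mathbf{p}_b}$ for $\mathbf{p}\in\mathbb{D}_{U_2}$. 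Since $\mathbf{p}_a,\mathbf{p}_b\in A_1$ and the summation ranges only over $\mathbf{p}\cap U_2$, the convention $\lambda_q:=x_q$ for $q\in U_1$ turns the right-hand side, after the substitution, into exactly $\blambda^\bs_{\mathbf{p}_a}-\blambda^\bs_{\mathbf{p}_b}$. As a $U_2$-chain is precisely a chain joining two elements of $A_1$ through interior elements of $U_2$, these are exactly the inequalities defining the marked chain polytope $\mathcal{C}_{P,A_1}(\blambda^\bs)$, giving the equality in (2).

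The whole argument is a bookkeeping verification, so there is no deep obstacle; the step demanding the most care is the treatment of the convention $\lambda_q:=x_q$ for $q\in U_1$ in the $U_2$-chain inequalities. One must check that the summation $\sum_{q\in\mathbf{p}\cap U_2}x_q$ never absorbs a $U_1$-coordinate, so that upon fixing the fiber the entire $U_1$-dependence migrates to the right-hand side and reassembles exactly into the new marking values $\blambda^\bs_{\mathbf{p}_a}$ and $\blambda^\bs_{\mathbf{p}_b}$. A secondary point worth recording is that $\blambda^\bs$ is a legitimate, order-compatible marking on $A_1$ precisely because $\pi_1(\bs)$ lies in $\mathcal{O}_{A_1,A}(\blambda)$, which guarantees the target polytopes are the genuine marked order and marked chain polytopes and explains why the lemma is indeed immediate from the definitions.
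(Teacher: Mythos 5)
Your proof is correct and is essentially the paper's own argument made explicit: the paper offers no proof beyond the remark that the lemma is ``clear by definition,'' and your inequality-by-inequality matching (splitting the defining relations by whether their endpoints lie in $A\cup U_1$ or touch $U_2$, then absorbing the fixed $U_1$-coordinates into the marking $\blambda^{\bs}$) is precisely the definitional bookkeeping that remark compresses. In particular, your two points of care --- that the fiber $\pi_1^{-1}\circ\pi_1(\bs)$ is taken inside the ambient polytope, and that the convention $\lambda_q := x_q$ for $q\in U_1$ makes the $U_2$-chain inequalities reassemble into the chain inequalities of $\mathcal{C}_{P,A_1}(\blambda^{\bs})$ --- are exactly what makes the identities hold.
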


\begin{remark}
Let $(V_1,V_2)$ be another admissible decomposition of the marked poset $(P,A,\blambda)$ with $U_1\subseteq V_1$. Then it is easy to see that the polytope $\mathcal{CO}_{V_1,V_2}(\blambda)$ has the decomposition property with respect to $(U_1, U_2)$.
\end{remark}

\begin{remark}
From Lemma \ref{Lem:Decomposition}, $\mathcal{O}_{P,A}(\blambda)$ and  $\mathcal{CO}_{U_1,U_2}(\blambda)$ have the decomposition property with respect to $(U_1, U_2)$:
\begin{enumerate}
\item for any lattice point $\bs\in\mathcal{O}_{P,A}(\blambda)$, there exists a unique lattice point $\bt\in\mathcal{O}_{P,A_1}(\blambda^\bs)$ such that $\bs=(\pi_1(\bs),\bt)\in\mathbb{R}^{|U_1|}\times\mathbb{R}^{|U_2|}$;
\item for any lattice point $\bs\in\mathcal{CO}_{U_1,U_2}(\blambda)$, there exists a unique lattice point $\bt\in\mathcal{C}_{P,A_1}(\blambda^\bs)$ such that $\bs=(\pi_1(\bs),\bt)\in\mathbb{R}^{|U_1|}\times\mathbb{R}^{|U_2|}$.
\end{enumerate}
\end{remark}

We show in the following example that projections of a marked chain polytope do not satisfy the decomposition property in general.

\begin{example}
Consider the linear poset 
$$P=\{b\prec x_3\prec x_2\prec x_1\prec a\}$$
with marked points $A=\{a,b\}$. We fix a marking $\blambda=(\lambda_a,\lambda_b)=(6,0)$. Let $U_1=\{x_1\}$ and $U_2=\{x_2,x_3\}$. 
\par
It is clear that $\bt=(x_1,x_2,x_3)=(0,3,2)$ is in $\mathcal{C}_{P,A}(\blambda)$. If we consider the marked poset $(P,A_2,\bmu)$ with $A_2=A\cup U_2=\{a,x_2,x_3,b\}$ and the marking 
$$\bmu=(\bmu_a,\bmu_{x_2},\bmu_{x_3},\bmu_b)=(6,3,2,0),$$
the point $x_1=3$ is a lattice point in the marked chain polytope associated to $(P,A_2,\bmu)$, but $(3,3,2)\notin\mathcal{C}_{P,A}(\blambda)$.
\end{example}

\section{Normality and Minkowski sum property}\label{sec3}
\subsection{Normality of marked chain-order polytopes}
Notice that by Proposition \ref{Prop:COdialation}, for any $n\in\mathbb{N}$, we have:
$$S(n\,\mathcal{CO}_{U_1,U_2}(\blambda))=S_{U_1,U_2}(n\blambda).$$
We start with the following property: \begin{theorem}\label{Thm:normal}
Let $(P, A, \blambda)$ be a marked poset and $(U_1, U_2)$ be an admissible decomposition of it. Then for any $N\in\mathbb{N}$, $S_{U_1,U_2}(N\blambda)=S_{U_1,U_2}(\blambda)+\ldots+S_{U_1,U_2}(\blambda)$ is the $N$-fold Minkowski sum.
\end{theorem}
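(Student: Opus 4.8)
The plan is to prove the two inclusions
$$S_{U_1,U_2}(\blambda) + \cdots + S_{U_1,U_2}(\blambda) \subseteq S_{U_1,U_2}(N\blambda)$$
and its reverse separately. For the first (easy) inclusion I would use Proposition~\ref{Prop:COdialation}, which gives $\mathcal{CO}_{U_1,U_2}(N\blambda) = N\,\mathcal{CO}_{U_1,U_2}(\blambda)$; since for a convex set $K$ the $N$-fold Minkowski sum equals $NK$, any sum $\mathbf{x}_1 + \cdots + \mathbf{x}_N$ of lattice points of $\mathcal{CO}_{U_1,U_2}(\blambda)$ is an integral point of $N\,\mathcal{CO}_{U_1,U_2}(\blambda) = \mathcal{CO}_{U_1,U_2}(N\blambda)$. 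Concretely one checks that each defining inequality, whose right-hand side is linear in $\blambda$, is additive under this sum. The substance of the theorem is the reverse inclusion, which I would establish through the fibration $\pi_1,\pi_2$ and the decomposition property.

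To this end, take a lattice point $\mathbf{x} \in S_{U_1,U_2}(N\blambda)$ and split it as $\mathbf{x} = (\bs, \bt)$ with $\bs = \pi_1(\mathbf{x})$ and $\bt = \pi_2(\mathbf{x})$. By Lemma~\ref{Lem:pr1}, $\bs$ is an integral point of $\mathcal{O}_{A_1,A}(N\blambda) = N\,\mathcal{O}_{A_1,A}(\blambda)$ (the dilation identity holding by the same reasoning as Proposition~\ref{Prop:COdialation}), while by Lemma~\ref{Lem:Decomposition}(2) the point $\bt$ is an integral point of the marked chain polytope $\mathcal{C}_{P,A_1}(\blambda^{\bs})$ attached to the induced marking $\blambda^{\bs}$ on $A_1 = A\cup U_1$, which restricts to $N\blambda$ on $A$ and to $\bs$ on $U_1$. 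Now I would invoke the normality of marked order polytopes --- classical, as order polytopes are compressed and admit a unimodular triangulation along linear extensions --- to write $\bs = \bs_1 + \cdots + \bs_N$ with each $\bs_i \in S(\mathcal{O}_{A_1,A}(\blambda))$. The key observation is that the induced markings then add up, $\blambda^{\bs} = \blambda^{\bs_1} + \cdots + \blambda^{\bs_N}$ on $A_1$, since on $A$ both sides equal $N\blambda$ and on $U_1$ this is just $\bs = \sum_i \bs_i$. Applying the Minkowski sum property for marked chain polytopes (Theorem~\ref{Thm:C/O}(2), used for the marked poset $(P, A_1, -)$ and iterated to $N$ summands) then yields $\bt = \bt_1 + \cdots + \bt_N$ with $\bt_i \in S(\mathcal{C}_{P,A_1}(\blambda^{\bs_i}))$. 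Setting $\mathbf{x}_i := (\bs_i, \bt_i)$, Lemma~\ref{Lem:pr1} and Lemma~\ref{Lem:Decomposition}(2) show $\mathbf{x}_i \in S_{U_1,U_2}(\blambda)$ (its $\pi_1$-image lies in $\mathcal{O}_{A_1,A}(\blambda)$ and its $\pi_2$-part lies in the corresponding chain fiber), and by construction $\mathbf{x} = \mathbf{x}_1 + \cdots + \mathbf{x}_N$, as required.

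The main obstacle I anticipate is ensuring that the two decompositions are compatible: the splitting of the order part $\bs$ forced upon us by normality of $\mathcal{O}_{A_1,A}(\blambda)$ is essentially arbitrary, and one must be able to match it with a splitting of the chain part $\bt$. This is exactly what the additivity $\blambda^{\bs} = \sum_i \blambda^{\bs_i}$ buys: it makes the fiber over $\bs$ the Minkowski sum of the fibers over the $\bs_i$, so that Theorem~\ref{Thm:C/O}(2) applies fiberwise. The two inputs that do the real work are therefore (a) the normality of the marked order polytope $\mathcal{O}_{A_1,A}(\blambda)$ for the $\pi_1$-part, and (b) the marked-chain Minkowski property for the $\pi_2$-part; the admissibility of $(U_1,U_2)$ enters implicitly through Lemma~\ref{Lem:pr1}, guaranteeing that $\pi_1(\mathcal{CO}_{U_1,U_2}(\blambda))$ is genuinely the full marked order polytope $\mathcal{O}_{A_1,A}(\blambda)$.
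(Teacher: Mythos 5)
Your overall architecture coincides with the paper's own proof: split a lattice point $\mathbf{x}\in S_{U_1,U_2}(N\blambda)$ into its order part $\bs=\pi_1(\mathbf{x})$ and chain part $\bt=\pi_2(\mathbf{x})$, decompose $\bs$ inside $\mathcal{O}_{A_1,A}(N\blambda)$, observe that the induced markings are additive, $\blambda^{\bs}=\blambda^{\bs_1}+\cdots+\blambda^{\bs_N}$, and then decompose $\bt$ fiberwise using the Minkowski property of marked chain polytopes (Theorem~\ref{Thm:C/O}(2)), reassembling the summands via Lemmas~\ref{Lem:pr1} and~\ref{Lem:Decomposition}. All of that part of your argument is correct, and the easy inclusion is handled the same way as in the paper.

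There is, however, a genuine gap at the step you dismiss as classical: the normality (integer decomposition property) of the \emph{marked} order polytope $\mathcal{O}_{A_1,A}(\blambda)$, i.e.\ the existence of a splitting $\bs=\bs_1+\cdots+\bs_N$ with $\bs_i\in S(\mathcal{O}_{A_1,A}(\blambda))$. This is not an importable known fact: the paper explicitly remarks, right after this theorem, that no proof of normality of marked order (or marked chain) polytopes existed in the literature, and indeed normality of $\mathcal{O}_{A_1,A}(\blambda)$ is exactly the statement of the theorem for the marked poset $(A_1,A,\blambda)$ with the pure-order decomposition $(A_1\setminus A,\emptyset)$ --- it is the part of the theorem that genuinely requires a new argument. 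Your proposed justification does not supply one: compressedness and the linear-extension unimodular triangulation are properties of Stanley's $0/1$ order polytopes, whereas marked order polytopes are not $0/1$ polytopes and are not compressed in general (already for the chain $b\prec x\prec a$ the marked order polytope is the segment $[\blambda_b,\blambda_a]$, of lattice width $\blambda_a-\blambda_b$ with respect to its facets). The gap is fillable, and this is precisely the content of the ``Claim'' in the paper's proof: reduce by halving to $S_{A_1,A}(N\blambda)=S_{A_1,A}(\lceil N/2\rceil\blambda)+S_{A_1,A}(\lfloor N/2\rfloor\blambda)$, write $\bs_x=Nr_x+v_x$ with $0\le v_x<N$, and take $(\bs^1)_x=\lceil N/2\rceil r_x+\min\{v_x,\lceil N/2\rceil\}$ and $(\bs^2)_x=\lfloor N/2\rfloor r_x+\max\{0,v_x-\lceil N/2\rceil\}$, checking the defining inequalities case by case. (Alternatively, one could observe that the constraint matrix of a marked order polytope has at most one $+1$ and one $-1$ per row, hence is totally unimodular, and invoke the Baum--Trotter integer decomposition theorem for totally unimodular systems.) As written, though, your proof assumes at its crucial point a statement equivalent to a special case of the theorem, with an argument that applies only to unmarked order polytopes.
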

Note that Proposition~\ref{Prop:COdialation} deals with all points of the polytope, while here we would like to decompose the lattice points as a sum of lattice points.

\begin{proof}
For $\blambda \in \bz_{\geq 0}^{|A|}$ and $N \geq 2$, it suffices to show that 
\begin{equation}\label{Eq:normal}
S_{U_1,U_2}(N\blambda)=S_{U_1,U_2}(\lceil N/2 \rceil\blambda)+S_{U_1,U_2}(\lfloor N/2 \rfloor\blambda).
\end{equation}
We first prove that (\ref{Eq:normal}) holds when projected onto coordinates in $U_1$, i.e. for $A_1 = U_1 \cup A$:
\begin{equation}\label{Eq:order}
S_{A_1,A}(N\blambda)=S_{A_1,A}(\lceil N/2 \rceil\blambda)+S_{A_1,A}(\lfloor N/2 \rfloor\blambda).
\end{equation}
Let $\bs\in S_{A_1,A}(N\blambda)$, for any $x\in U_1$, we write
$$\bs_x=Nr_x+v_x\ \ \text{for some $r_x\in\mathbb{N}$ and $0\leq v_x<N$.}$$
We define for $x\in U_1$,
$$(\bs^1)_x=\lceil N/2 \rceil r_x + \text{min}\{ v_x, \lceil N/2 \rceil \}\ \text{ and }\ (\bs^2)_x=\lfloor N/2 \rfloor r_x + \text{max}\{ 0, v_x - \lceil N/2 \rceil \}.$$

\noindent
\textbf{Claim.}  $\bs^1\in S_{A_1,A}(\lceil N/2 \rceil\blambda)$ and $\bs^2\in S_{A_1,A}(\lfloor N/2 \rfloor\blambda)$.
\begin{proof}[Proof of the claim]
\begin{enumerate}
\item For any $a\prec x\prec b$ with $a,b\in A$ and $x\in U_1$, we have to show
$$\lceil N/2 \rceil \blambda_a \leq (\bs^1)_x  \leq \lceil N/2 \rceil \blambda_b\ \ \text{and}\ \ \lfloor N/2 \rfloor \blambda_a \leq (\bs^2)_x \leq  \lfloor N/2 \rfloor \blambda_b.$$
Indeed, by definition we have 
\begin{equation}\label{rx}
r_x  \lceil N/2 \rceil \leq (\bs^1)_x \leq  (r_x + 1) \lceil N/2 \rceil  \text{ and } r_x  \lfloor N/2 \rfloor \leq (\bs^2)_x \leq  (r_x + 1) \lfloor N/2 \rfloor.
\end{equation}
By assumption:
$$N \blambda_a \leq \bs_x = N r_x  + v_x \leq N \blambda_b,$$
which implies that $\blambda_a \leq r_x \leq  \blambda_b$. By studying two cases $r_x=\blambda_b$ and $r_x<\blambda_b$ and applying \ref{rx}, we obtain the desired inequalities.
\item For any $x,y\in U_1$ with $x\prec y$ we have to show
$$(\bs^1)_x\leq (\bs^1)_y,\ \ (\bs^2)_x\leq (\bs^2)_y.$$
The assumption $\bs_x\leq \bs_y$ implies $r_x\leq r_y$. By considering two cases $r_x=r_y$ and $r_x<r_y$, it is easy to deduce the inequalities.
\end{enumerate}
\end{proof}

We turn to the proof of (\ref{Eq:normal}). Let $\bs\in S_{U_1,U_2}(N\blambda)$. Then by (\ref{Eq:order}) there exists $\bs^1\in S_{A_1,A}(\lceil N/2 \rceil\blambda)$ and $\bs^2\in S_{A_1,A}(\lfloor N/2 \rfloor\blambda)$ such that $\pi_1(\bs)=\bs^1+\bs^2$. By Lemma \ref{Lem:Decomposition}, there exists a unique $\bt\in \mathcal{C}_{P,A_1}((N\lambda)^{\pi_1(\bs)})$ such that $\bs=(\pi_1(\bs),\bt)$.
\par
By Theorem \ref{Thm:C/O} (2), 
$$\mathcal{C}_{P,A_1}((N\lambda)^{\pi_1(\bs)})=\mathcal{C}_{P,A_1}((\lceil N/2\rceil\lambda)^{\bs^1})+\mathcal{C}_{P,A_1}((\lfloor N/2\rfloor\lambda)^{\bs^2}).$$
We let $\bt=\bt^1+\bt^2$ a corresponding decomposition of $\bt$. 
Again by Lemma \ref{Lem:Decomposition}, $(\bs^1,\bt^1)\in S_{U_1,U_2}(\lceil N/2 \rceil\blambda)$ and $(\bs^2,\bt^2)\in S_{U_1,U_2}(\lfloor N/2 \rfloor\blambda)$, therefore $\bs=(\bs^1,\bt^1)+(\bs^2,\bt^2)$ is the required decomposition. The other inclusion is clear.
\end{proof}

\begin{corollary}
The marked chain-order polytope $\mathcal{CO}_{U_1,U_2}(\lambda)$ is a lattice polytope, so it is a normal polytope.
\end{corollary}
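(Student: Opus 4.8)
The plan is to separate the statement into two parts: that $\mathcal{CO}_{U_1,U_2}(\blambda)$ is a lattice polytope, and that it is normal. Normality will be almost immediate once integrality of the vertices is established. Indeed, by the identity $S(n\,\mathcal{CO}_{U_1,U_2}(\blambda)) = S_{U_1,U_2}(n\blambda)$ recorded at the beginning of this section (a consequence of Proposition~\ref{Prop:COdialation}), the defining condition of normality---that $S(n\,\mathcal{CO}_{U_1,U_2}(\blambda))$ be the $n$-fold Minkowski sum of $S(\mathcal{CO}_{U_1,U_2}(\blambda)) = S_{U_1,U_2}(\blambda)$---is precisely the content of Theorem~\ref{Thm:normal}. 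So the only real work is to verify that the polytope has integral vertices.

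For integrality I would argue as follows. By Remark~\ref{Rmk:RationalVertex} every vertex $v$ of $\mathcal{CO}_{U_1,U_2}(\blambda)$ has rational coordinates, so there is a positive integer $q$ with $qv \in \bz^{|P\setminus A|}$. Scaling by $q$ is an affine isomorphism carrying vertices to vertices, and by Proposition~\ref{Prop:COdialation} we have $q\,\mathcal{CO}_{U_1,U_2}(\blambda) = \mathcal{CO}_{U_1,U_2}(q\blambda)$; hence $qv$ is a vertex of $\mathcal{CO}_{U_1,U_2}(q\blambda)$ and, being integral, a lattice point, i.e. $qv \in S_{U_1,U_2}(q\blambda)$.

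Now I would apply Theorem~\ref{Thm:normal} with $N = q$: the lattice point $qv$ decomposes as $qv = w_1 + \cdots + w_q$ with each $w_i \in S_{U_1,U_2}(\blambda) \subseteq \mathcal{CO}_{U_1,U_2}(\blambda)$. Then
$$ v = \frac{1}{q}\bigl(w_1 + \cdots + w_q\bigr) $$
exhibits $v$ as a convex combination, with all coefficients $1/q > 0$, of points of the polytope. Since $v$ is a vertex, hence an extreme point, every point occurring with positive coefficient must equal $v$; thus $w_1 = \cdots = w_q = v$ and in particular $v = w_1 \in \bz^{|P\setminus A|}$. Therefore all vertices are integral, $\mathcal{CO}_{U_1,U_2}(\blambda)$ is a lattice polytope, and normality follows as explained above.

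I expect no serious obstacle here: the substantive content is Theorem~\ref{Thm:normal}, already proved, and the present argument only combines it with the extremality of vertices and the dilation identity. The one point deserving a line of care is to confirm that Theorem~\ref{Thm:normal} is phrased purely in terms of the lattice-point sets $S_{U_1,U_2}(\cdot)$ and does not tacitly presuppose integrality of the vertices, so that it may legitimately be invoked before the polytope is known to be a lattice polytope; inspecting its statement and proof, which manipulate only lattice points, confirms this is the case.
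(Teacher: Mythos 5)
Your proof is correct and is essentially the paper's own argument: both rest on Remark~\ref{Rmk:RationalVertex}, Proposition~\ref{Prop:COdialation} and Theorem~\ref{Thm:normal}, scaling a rational point of $\mathcal{CO}_{U_1,U_2}(\blambda)$ to an integral one, decomposing it as a sum of lattice points of the original polytope, and dividing back. The only (harmless) difference is the final deduction: you apply the decomposition to a vertex and use its extremality to force every summand to equal that vertex, whereas the paper applies it to an arbitrary rational point and concludes that the whole polytope is contained in the convex hull of its lattice points.
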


\begin{proof}
The proof of Lemma 11.7 in \cite{FFoL13b} can be applied here.
\par
Pick a point $p\in \mathcal{CO}_{U_1,U_2}(\lambda)$ with rational coordinates. Let $n\in\mathbb{N}$ such that $np$ has integral coordinates. Then $np\in S_{U_1,U_2}(n\blambda)$. By Theorem \ref{Thm:normal}, there exists $p_1,p_2,\ldots,p_n\in S_{U_1,U_2}(\lambda)$ such that 
$$np=p_1+p_2+\ldots+p_n.$$
Hence $p=\frac{1}{n}p_1+\frac{1}{n}p_2+\ldots+\frac{1}{n}p_n$ 
is in the convex hull of $S_{U_1,U_2}(\lambda)$. 
\par
By Remark \ref{Rmk:RationalVertex}, all vertices of $\mathcal{CO}_{U_1,U_2}(\lambda)$ have rational coordinates. This implies that $\mathcal{CO}_{U_1,U_2}(\lambda)$ is contained in the convex hull of $S_{U_1,U_2}(n\blambda)$. Hence $\mathcal{CO}_{U_1,U_2}(\lambda)$ is a lattice polytope and by Theorem \ref{Thm:normal}, it is normal.
\end{proof}

We obtain the normality of the marked order and marked chain polytopes as special cases.  

\begin{corollary}
The marked order polytope $\mathcal{O}_{P,A}(\blambda)$ and the marked chain polytope $\mathcal{C}_{P,A}(\blambda)$ are both normal.
\end{corollary}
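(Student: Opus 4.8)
The plan is to recognize the marked order polytope and the marked chain polytope as the two extremal members of the family of marked chain-order polytopes, and then simply to invoke the preceding corollary, which already establishes normality for every marked chain-order polytope attached to an \emph{admissible} decomposition. Concretely, Example~\ref{Ex:extremal} identifies $\mathcal{O}_{P,A}(\blambda) = \mathcal{CO}_{P \setminus A, \emptyset}(\blambda)$ via the decomposition $(U_1, U_2) = (P \setminus A, \emptyset)$, and $\mathcal{C}_{P,A}(\blambda) = \mathcal{CO}_{\emptyset, P \setminus A}(\blambda)$ via the decomposition $(U_1, U_2) = (\emptyset, P \setminus A)$.

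The one point that genuinely needs to be checked is that both of these extremal decompositions are admissible, since the preceding corollary is stated only for admissible decompositions. This is immediate: admissibility forbids the existence of $u_1 \in U_1$ and $u_2 \in U_2$ with $u_1 \prec u_2$, and in each extremal case one of the two blocks $U_1$, $U_2$ is empty, so the condition is satisfied vacuously.

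With admissibility in hand, the previous corollary applies verbatim and tells us that $\mathcal{CO}_{P \setminus A, \emptyset}(\blambda)$ and $\mathcal{CO}_{\emptyset, P \setminus A}(\blambda)$ are normal lattice polytopes; translating back through the identifications of the first step yields the normality of $\mathcal{O}_{P,A}(\blambda)$ and $\mathcal{C}_{P,A}(\blambda)$. There is no real obstacle here: the entire substance of the statement was already carried out in the general corollary (and ultimately in Theorem~\ref{Thm:normal}), and all that remains is the bookkeeping observation that the classical marked order and chain polytopes occur as the boundary cases of the new family.
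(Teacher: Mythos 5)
Your proposal is correct and is exactly the paper's argument: the authors obtain this corollary by observing that $\mathcal{O}_{P,A}(\blambda)$ and $\mathcal{C}_{P,A}(\blambda)$ are the marked chain-order polytopes for the extremal admissible decompositions $(P\setminus A,\emptyset)$ and $(\emptyset,P\setminus A)$ of Example~\ref{Ex:extremal}, so the preceding corollary applies. Your explicit check that these extremal decompositions are (vacuously) admissible is a small but welcome addition that the paper leaves implicit.
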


\begin{remark}
Up to our knowledge, there is no proof in the literature for the normality of the marked order and marked chain polytopes. For the order polytope, the normality is proved by Stanley \cite{Sta86} by showing the existence of a unimodular triangulation.
\end{remark}

\subsection{Minkowski sum property}

We generalize the Minkowski sum property for marked chain polytopes (Theorem \ref{Thm:C/O} (2)) to marked chain-order polytopes with linearly ordered markings.

\begin{theorem}\label{Thm:Minkowski}
Suppose that $A$ is linearly ordered and $\blambda, \bmu\in\mathbb{Z}_{\geq 0}^{|A|}$ are two markings of $A$. Then the Minkowski sum property holds:
$$
S_{U_1, U_2}(\blambda)  +S_{U_1, U_2}( \mu)  = S_{U_1, U_2}( \blambda+ \bmu).
$$
\end{theorem}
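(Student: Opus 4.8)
The plan is to follow the same two-step scheme as the proof of Theorem~\ref{Thm:normal}: first establish the identity after projecting to the $U_1$-coordinates (the marked order part), then lift it across the $U_2$-fibres (the marked chain part) using the decomposition Lemmas~\ref{Lem:pr1} and~\ref{Lem:Decomposition}. The inclusion $S_{U_1,U_2}(\blambda)+S_{U_1,U_2}(\bmu)\subseteq S_{U_1,U_2}(\blambda+\bmu)$ is immediate: every defining inequality of $\mathcal{CO}_{U_1,U_2}$ bounds its variable side by a quantity that is linear in the marking, so adding a lattice point for $\blambda$ to one for $\bmu$ yields a lattice point for $\blambda+\bmu$. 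All the work is in the reverse inclusion.

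For the order part I would prove, with $A_1=A\cup U_1$, the Minkowski identity $S_{A_1,A}(\blambda+\bmu)=S_{A_1,A}(\blambda)+S_{A_1,A}(\bmu)$ for the marked order polytope $\mathcal{O}_{A_1,A}$. This is exactly where linear orderedness of $A$ enters. For $x\in U_1$ write $\ell_x(\blambda)=\max\{\blambda_a\mid a\in A,\ a\prec x\}$ and $u_x(\blambda)=\min\{\blambda_a\mid a\in A,\ x\prec a\}$ for the lower and upper marking windows. Since $A$ is a chain, the sets $\{a\in A\mid a\prec x\}$ and $\{a\in A\mid x\prec a\}$ are chains, so (the marking being order preserving on $A$) these extrema are attained at elements of $A$ that do \emph{not} depend on the marking; consequently the windows are additive, $\ell_x(\blambda+\bmu)=\ell_x(\blambda)+\ell_x(\bmu)$ and $u_x(\blambda+\bmu)=u_x(\blambda)+u_x(\bmu)$. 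Given $\bs\in S_{A_1,A}(\blambda+\bmu)$ I then seek $\bs^1$ with $\bs^1|_A=\blambda$ by solving, for each $x\in U_1$, $\bs^1_x\in[\ell_x(\blambda),u_x(\blambda)]$ with the extra requirement $\bs_x-u_x(\bmu)\leq \bs^1_x\leq \bs_x-\ell_x(\bmu)$ (ensuring $\bs^2_x:=\bs_x-\bs^1_x$ lies in the $\bmu$-window), together with $0\leq \bs^1_y-\bs^1_x\leq \bs_y-\bs_x$ for every relation $x\prec y$ in $U_1$ (the second bound keeps $\bs^2=\bs-\bs^1$ order preserving). Additivity of the windows shows the intersection of the two individual intervals for each $\bs^1_x$ is nonempty, and the constraints among the $\bs^1_x$ form an integral system of difference constraints; I would produce a simultaneous integer solution by a longest-path/greedy argument over the Hasse diagram of $U_1$ (equivalently, by checking the constraint graph has no negative cycle). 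This gives $\bs^1\in S_{A_1,A}(\blambda)$ and $\bs^2\in S_{A_1,A}(\bmu)$ with $\bs=\bs^1+\bs^2$.

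Now take $\bs\in S_{U_1,U_2}(\blambda+\bmu)$. Applying the order step to $\pi_1(\bs)\in S_{A_1,A}(\blambda+\bmu)$ gives $\pi_1(\bs)=\bs^1+\bs^2$ with $\bs^1\in S_{A_1,A}(\blambda)$ and $\bs^2\in S_{A_1,A}(\bmu)$. By Lemma~\ref{Lem:Decomposition}(2) the fibre $\pi_2\circ\pi_1^{-1}\circ\pi_1(\bs)$ is the marked chain polytope $\mathcal{C}_{P,A_1}\big((\blambda+\bmu)^{\pi_1(\bs)}\big)$, and one checks directly on $A$ and on $U_1$ that the markings add: $(\blambda+\bmu)^{\pi_1(\bs)}=\blambda^{\bs^1}+\bmu^{\bs^2}$ as markings of $A_1$. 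Hence Theorem~\ref{Thm:C/O}(2), applied to the marked poset $(P,A_1,\cdot)$, decomposes the $U_2$-coordinate $\bt$ of $\bs$ as $\bt=\bt^1+\bt^2$ with $\bt^1\in\mathcal{C}_{P,A_1}(\blambda^{\bs^1})$ and $\bt^2\in\mathcal{C}_{P,A_1}(\bmu^{\bs^2})$. Crucially, the chain-polytope Minkowski property holds for \emph{arbitrary} marked posets, so no further hypothesis on $A_1$ is needed here. Reassembling via Lemma~\ref{Lem:Decomposition}, $(\bs^1,\bt^1)\in S_{U_1,U_2}(\blambda)$ and $(\bs^2,\bt^2)\in S_{U_1,U_2}(\bmu)$, and their sum is $(\pi_1(\bs),\bt)=\bs$, which proves $\supseteq$.

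The main obstacle is the order step. Pointwise feasibility is easy once the windows are additive, but one must split $\bs$ into $\bs^1,\bs^2$ that are \emph{simultaneously} order preserving on all of $U_1$, not merely coordinate by coordinate; this global compatibility is what can fail, and indeed the Minkowski property is false for marked order polytopes over general marked posets. Linear orderedness of $A$ is used precisely to make the windows additive (and thereby the difference-constraint system feasible), which is why the hypothesis appears in the statement while it is absent from the chain-polytope input Theorem~\ref{Thm:C/O}(2).
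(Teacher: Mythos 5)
Your second half is correct and is essentially the paper's own argument: passing to the fibre $\pi_2\circ\pi_1^{-1}\circ\pi_1(\bs)=\mathcal{C}_{P,A_1}\big((\blambda+\bmu)^{\pi_1(\bs)}\big)$ via Lemma~\ref{Lem:Decomposition}, observing $(\blambda+\bmu)^{\pi_1(\bs)}=\blambda^{\bs^1}+\bmu^{\bs^2}$, and invoking Theorem~\ref{Thm:C/O}(2), which indeed holds for arbitrary marked posets, is exactly how the paper concludes; the inclusion $\subseteq$ is also fine. The genuine gap is in your order step, which is the heart of the theorem. You reduce it to feasibility of the system $L_x\le z_x\le U_x$, $0\le z_y-z_x\le\bs_y-\bs_x$ (for $x\prec y$ in $U_1$), where $L_x=\max\big(\ell_x(\blambda),\,\bs_x-u_x(\bmu)\big)$ and $U_x=\min\big(u_x(\blambda),\,\bs_x-\ell_x(\bmu)\big)$, and you assert that additivity of the windows makes this difference-constraint system feasible. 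It does not: additivity only gives the single-variable condition $L_x\le U_x$. Already the shortest cycle in the constraint graph, coming from a single relation $x\prec y$, forces $L_x\le U_y$, whose nontrivial unfolding is $\ell_y(\bmu)-u_x(\bmu)\le\bs_y-\bs_x$; whenever a marked element $a$ lies between $x$ and $y$ (which is the typical situation, e.g. $p_{1,j}\prec p_{0,j-1}\prec p_{1,j-1}$ in the Gelfand-Tsetlin poset) one has $\ell_y(\bmu)\ge\bmu_a\ge u_x(\bmu)$, so the left-hand side can be positive. Proving this inequality requires locating the attaining elements of $A$, using that $A$ is a chain with order-preserving marking, and invoking the window inequalities that $\bs$ itself satisfies at $x$ and $y$ against $\blambda+\bmu$; longer zigzag cycles through incomparable elements of $U_1$ require a further, more involved analysis. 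None of this verification appears in your proposal, so the core of the proof is missing. (The order-part identity is true --- it is precisely the case $U_2=\emptyset$ of the theorem --- but that is exactly what has to be proved.)

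For comparison, the paper sidesteps the global-compatibility problem entirely: linear orderedness of $A$ is used to write $\bmu$ as a sum of the indecomposable $0/1$ markings $\omega^i$, reducing to $\bmu=\omega^i$; then for $\bs\in S_{U_1,U_2}(\blambda+\omega^i)$ it chooses a \emph{maximal} lattice point $\bm\in S(\mathcal{O}_{A_1,A}(\omega^i))$ with $\bm_x\le\bs_x$ for all $x\in U_1$, and the claim $\pi_1(\bs)-\bm\in\mathcal{O}_{A_1,A}(\blambda)$ is a short case check precisely because $\bm$ takes only the values $0$ and $1$. So to repair your proof you must either carry out the full no-negative-cycle analysis for arbitrary zigzag cycles, or adopt the paper's reduction to indecomposables, where the greedy choice does that work for you. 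One point in your favor: you state explicitly that the markings must be order preserving on $A$; the paper uses this silently (both in the decomposition $\bmu=\sum c_i\omega^i$ and in the claim), and the identity can indeed fail for markings in $\mathbb{Z}_{\ge 0}^{|A|}$ that are not order preserving, since then $S_{U_1,U_2}(\blambda)$ may be empty while $S_{U_1,U_2}(\blambda+\bmu)$ is not.
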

\begin{proof}
Since $A$ is linearly ordered, we may suppose $A = \{ a_1\prec a_2\prec \ldots \prec a_s \}$. For $1\leq i\leq s$, let $\omega^i$ denote the marking of $A$ satisfying:
$$(\omega^i)_{a_j} = \begin{cases} 0, & \text{if }\ 1\leq j < i; \\ 1, & \text{if }\  i \leq j\leq s. \end{cases}$$ 
It suffices to prove the theorem for $\bmu = \omega^i$ and we can moreover assume that for any $1\leq j<i$, $\blambda_{a_j}=0$.
\par
We prove the Minkowski sum property under this assumption. Let $\bs \in S_{U_1, U_2}(\blambda + \omega^i)$. The partial order on $P$ induces a partial (lexicographic) order on  $n S(\mathcal{O}_{A_1,A}(\omega^i))$ and let $\mathbf{m} \in S(\mathcal{O}_{A_1,A}(\omega^i))$ be a maximal element such that for any $x\in U_1$: $\bm_x \leq \bs_x$. \\
\\
\textbf{Claim:} $\pi_1(\bs)-\bm\in\mathcal{O}_{A_1, A}(\blambda)$.
\begin{proof}[Proof of the claim]
We check that $\pi_1(\bs)-\bm$ satisfies all relations in $\mathcal{O}_{A_1, A}(\blambda)$.
\begin{enumerate}
\item For $x\prec y\in U_1$, we need to show that $\bs_x-\bm_x\leq \bs_y-\bm_y$. Since $\bs_x\leq\bs_y$, we claim that it suffices to consider the case $\bm_x<\bm_y$. Indeed, if $\bm_y\leq\bm_x$, then $\bs_x-\bs_y\leq\bm_x-\bm_y$ is always true.
\par
Suppose that $\bm_x<\bm_y$. In this case $\bm_x=0$ and $\bm_y=1$ since $\bm\in\mathcal{O}_{A_1, A}(\omega^i)$; by the maximality of $\bm$, $\bs_x=0$, hence $\bs_x\leq\bs_y$.
\item For $x\prec a_k$ and $a_j\prec y$ where $x,y\in U_1$ and $a_k,a_j\in A$. We need to show that $\bs_x-\bm_x\leq\blambda_{a_k}$ and $\blambda_{a_j}\leq\bs_y-\bm_y$. We prove the first inequality, a similar proof works for the second one.
\par
Since $\bs_x\leq\blambda_{a_k}+(\omega^i)_{a_k}$, we study two cases:
\begin{itemize}
\item the case $k<i$: in this case $\bs_x-\blambda_{a_k}\leq 0$, so $\bs_x-\bm_x\leq\blambda_{a_k}$ is always true;
\item the case $k\geq i$: in this case $\bs_x-\blambda_{a_k}\leq 1$. If $\bs_x=0$, then $\bs_x-\blambda_{a_k}\leq \bm_x$ is always true. If $\bs_x\geq 1$, then $\bm_x=1$ and $\bs_x-\blambda_{a_k}\leq \bm_x$ holds. 
\end{itemize}
\end{enumerate}
\end{proof}

By Lemma \ref{Lem:Decomposition}, it remains to show that 
$$
\mathcal{C}_{P, A_1}((\blambda+\omega^i)^{\pi_1(\bs)})= \mathcal{C}_{P, A_1}(\blambda^{\pi_1(\bs)-\bm})+ \mathcal{C}_{P,A_1}((\omega^i)^{\bm}),
$$
but this follows from Theorem \ref{Thm:C/O} (2).
\end{proof}
 
\begin{corollary}
If $A$ is linearly ordered, then the polyhedral cone $\Cone(U_1,U_2)$ is finitely generated.
\end{corollary}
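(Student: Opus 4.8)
The plan is to view $\Cone(U_1,U_2)$ as the cone swept out by the marked chain-order polytopes as the marking varies, and then to read off a finite generating set for its lattice points directly from the Minkowski sum property. Concretely, I would treat the marking as a vector of free coordinates and identify $\Cone(U_1,U_2)$ with the set of pairs $(\blambda, x)$ with $\blambda \in \mathbb{R}_{\geq 0}^{|A|}$ and $x \in \mathcal{CO}_{U_1,U_2}(\blambda)$, sitting in $\mathbb{R}^{|P|}$; every defining inequality of $\mathcal{CO}_{U_1,U_2}(\blambda)$ is homogeneous of degree one once $\blambda_a$ is read as the coordinate $x_a$ (this is the computation behind Proposition~\ref{Prop:COdialation}), so this set is indeed a cone whose slice over a fixed integral $\blambda$ is exactly $\mathcal{CO}_{U_1,U_2}(\blambda)$. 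Its lattice points are then the graded union $\bigsqcup_{\blambda} S_{U_1,U_2}(\blambda)$ over integral markings, and the monoid structure is addition compatible with the grading by $\blambda$.

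Next I would reduce the grading to a finite fundamental set. Writing $A=\{a_1\prec\ldots\prec a_s\}$, each marking whose polytope is nonempty is order-preserving and hence decomposes as $\blambda=\sum_{i=1}^s c_i\,\omega^i$ with $c_i=\blambda_{a_i}-\blambda_{a_{i-1}}\in\mathbb{Z}_{\geq 0}$, where $\omega^i$ is the $0/1$ marking from the proof of Theorem~\ref{Thm:Minkowski}. Iterating the Minkowski sum property (Theorem~\ref{Thm:Minkowski}) gives $S_{U_1,U_2}(\blambda)=\sum_{i=1}^s\big(S_{U_1,U_2}(\omega^i)+\cdots+S_{U_1,U_2}(\omega^i)\big)$ with $c_i$ copies of $S_{U_1,U_2}(\omega^i)$. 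Hence every lattice point $(\blambda,x)$ of the cone is a non-negative integer combination of the finitely many points in $\bigcup_{i=1}^s\{\omega^i\}\times S_{U_1,U_2}(\omega^i)$, each set $S_{U_1,U_2}(\omega^i)$ being finite since $\mathcal{CO}_{U_1,U_2}(\omega^i)$ is bounded. Since a rational polyhedral cone is the non-negative real span of its lattice points, $\Cone(U_1,U_2)$ is the $\mathbb{R}_{\geq 0}$-span of this finite set, which proves the claim and exhibits an explicit generating set beyond the soft Minkowski--Weyl fact.

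The genuinely substantive ingredient is the Minkowski sum property, already established; granting it, the plan is almost bookkeeping. I expect the only delicate points to be, first, checking that the markings admitting a nonempty polytope are precisely the order-preserving ones, so that the $\omega^i$ really do span all relevant gradings, and second, confirming the identification of $\Cone(U_1,U_2)$ with the cone over the family. The first is where linear orderedness of $A$ enters: a non-order-preserving $\blambda$ forces a negative right-hand side $\blambda_{\mathbf{p}_a}-\blambda_{\mathbf{p}_b}$ in some chain inequality and hence an empty polytope, and this is exactly the hypothesis under which Theorem~\ref{Thm:Minkowski} is available in the first place.
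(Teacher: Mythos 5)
Your strategy is the one the paper intends (the corollary is stated there without proof, as an immediate consequence of Theorem~\ref{Thm:Minkowski}, via exactly your grading-by-marking and iterated Minkowski sum), but the step you yourself flag as delicate is genuinely false, and the proof breaks there. The claim that a marking which is not weakly order-preserving forces a negative right-hand side in some chain inequality, hence an empty polytope, fails whenever two consecutive elements of $A$ form a cover relation. Concretely, take $P=\{a_1\prec a_2\prec p\prec a_3\}$ with $A=\{a_1,a_2,a_3\}$ and the admissible decomposition $U_1=\{p\}$, $U_2=\emptyset$; here $A$ is linearly ordered and contains all extremal elements. Since $a_1\ra a_2$ is a cover relation, no element of $P\setminus A$ lies between $a_1$ and $a_2$, so no defining inequality of $\mathcal{CO}_{U_1,U_2}(\blambda)$ ever compares $\blambda_{a_1}$ with $\blambda_{a_2}$: the polytope is $\{x_p \mid \max(\blambda_{a_1},\blambda_{a_2})\leq x_p\leq \blambda_{a_3}\}$. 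For $\blambda=(1,0,1)$ this is the nonempty set $\{1\}$, although $\blambda$ is not weakly order-preserving. Hence $\bigl((1,0,1),1\bigr)$ is a lattice point of the cone as you defined it, while every nonnegative combination of your proposed generators $\{\omega^i\}\times S_{U_1,U_2}(\omega^i)$ has weakly order-preserving marking part; so this point is not even in the real span of your finite set, and that set does not generate your cone. Since your proof of the corollary routes through this explicit generation, it does not go through as written. The same example shows that Theorem~\ref{Thm:Minkowski} itself is false for arbitrary markings as literally stated: $S_{U_1,U_2}(1,0,0)=\emptyset$ and $S_{U_1,U_2}(0,0,1)=\{0,1\}$, so their Minkowski sum is empty, yet $S_{U_1,U_2}(1,0,1)=\{1\}$. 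The reduction to $\bmu=\omega^i$ in its proof, and therefore your iteration $S_{U_1,U_2}(\blambda)=\sum_i c_i\, S_{U_1,U_2}(\omega^i)$, is only legitimate for weakly order-preserving markings.

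The repair is to build the order relations on $A$ into the cone: take $\Cone(U_1,U_2)$ to be the homogenization restricted to weakly order-preserving markings, i.e.\ adjoin the inequalities $x_a\leq x_b$ for all $a\prec b$ in $A$ to the homogeneous system you wrote down (this is also the reading under which the paper's cone attached to the decomposition $(A\cup U_1,U_2)$ of $P$ makes sense, and under which Theorem~\ref{Thm:Minkowski} is correct). With that restriction every integral marking occurring in the cone is a nonnegative integer combination $\sum_i c_i\omega^i$, your iterated Minkowski argument applies verbatim, and the cone is the $\mathbb{R}_{\geq 0}$-span of the finite set $\bigcup_i\{\omega^i\}\times S_{U_1,U_2}(\omega^i)$. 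One further remark for perspective: any of these cones is cut out by finitely many rational linear inequalities (a finite poset has finitely many chains), so the literal statement of the corollary already follows from the Minkowski--Weyl theorem, which you mention only to set aside; the genuine content, in the spirit of the paper, is the explicit generating set supplied by the Minkowski property --- and that is precisely the part your argument needs the order-preserving restriction to salvage.
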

 
\begin{remark}
The condition \textit{linearly ordered} secures that one can subtract the indecomposable marking $\omega^i$ (\cite{Fou15a}). For non-linearly ordered subset $A$ this is more complicated as the indecomposable markings depend on the decomposition $(U_1, U_2)$.
\end{remark}

\section{Ehrhart equivalence}\label{sec4}
After proving that the marked chain-order polytopes are lattice polytopes, we study the Ehrhart polynomials.

\begin{theorem}\label{Thm:Ehrhart}
Let $(P,A,\blambda)$ be a marked poset. Then all marked chain-order polytopes associated to admissible decompositions are Ehrhart equivalent.
\end{theorem}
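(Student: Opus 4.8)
The plan is to show that every marked chain-order polytope $\mathcal{CO}_{U_1,U_2}(\blambda)$ is Ehrhart equivalent to the marked order polytope $\mathcal{O}_{P,A}(\blambda)$; since having the same Ehrhart polynomial is an equivalence relation, this immediately yields the theorem for all admissible decompositions simultaneously. By Proposition~\ref{Prop:COdialation} the Ehrhart polynomial of $\mathcal{CO}_{U_1,U_2}(\blambda)$ is the function $N \mapsto \#S_{U_1,U_2}(N\blambda)$, so I would reduce the statement to proving, for every $N \in \mathbb{N}$, the equality of lattice-point counts
$$\#S_{U_1,U_2}(N\blambda) = \#S(\mathcal{O}_{P,A}(N\blambda)).$$

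The engine is a fibration argument over the $U_1$-coordinates, applied uniformly in $N$. Fix $N$ and set $A_1 = A \cup U_1$. By Lemma~\ref{Lem:pr1}, both $\mathcal{CO}_{U_1,U_2}(N\blambda)$ and $\mathcal{O}_{P,A}(N\blambda)$ have the same image $\mathcal{O}_{A_1,A}(N\blambda)$ under $\pi_1$. Invoking the decomposition property recorded in the remarks after Lemma~\ref{Lem:Decomposition}, each lattice point of either polytope is uniquely a pair $(\bt,\mathbf{q})$ with $\bt \in S(\mathcal{O}_{A_1,A}(N\blambda))$ and $\mathbf{q}$ a lattice point of the corresponding $\pi_2$-fiber. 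Lemma~\ref{Lem:Decomposition} identifies that fiber as $\mathcal{O}_{P,A_1}((N\blambda)^{\bt})$ for the order polytope and as $\mathcal{C}_{P,A_1}((N\blambda)^{\bt})$ for the chain-order polytope, where the induced marking $(N\blambda)^{\bt}$ of $(P,A_1)$ depends only on $\bt$. Counting lattice points fiber by fiber over the common base then gives
$$\#S(\mathcal{O}_{P,A}(N\blambda)) = \sum_{\bt} \#S(\mathcal{O}_{P,A_1}((N\blambda)^{\bt})), \qquad \#S_{U_1,U_2}(N\blambda) = \sum_{\bt} \#S(\mathcal{C}_{P,A_1}((N\blambda)^{\bt})),$$
both sums running over $\bt \in S(\mathcal{O}_{A_1,A}(N\blambda))$.

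The remaining step is to compare the two families of summands, and here I would appeal to Theorem~\ref{Thm:C/O}(1): for each fixed marking $(N\blambda)^{\bt}$ of the marked poset $(P,A_1)$, the marked order polytope $\mathcal{O}_{P,A_1}((N\blambda)^{\bt})$ and the marked chain polytope $\mathcal{C}_{P,A_1}((N\blambda)^{\bt})$ are Ehrhart equivalent, hence in particular contain the same number of lattice points. Thus the two displayed sums agree term by term, establishing $\#S_{U_1,U_2}(N\blambda) = \#S(\mathcal{O}_{P,A}(N\blambda))$ for all $N$, which is precisely what is needed.

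I expect the conceptual content to be entirely carried by the preceding lemmas, so the only genuinely delicate point is bookkeeping: one must verify that a single marking $(N\blambda)^{\bt}$ governs the fiber of the order polytope and the fiber of the chain-order polytope over the same base point $\bt$, so that Theorem~\ref{Thm:C/O}(1) may legitimately be applied fiberwise. This matching of markings is exactly what Lemma~\ref{Lem:Decomposition} supplies. The essential external input is the Ardila--Bliem--Salazar equivalence of marked order and chain polytopes, applied not to $(P,A)$ itself but to the enlarged marked poset $(P,A_1)$ with the marking read off from the $U_1$-part of each lattice point of the base; once this reduction is in place the argument is a clean telescoping of Lemmas~\ref{Lem:pr1} and~\ref{Lem:Decomposition}.
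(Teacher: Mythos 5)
Your proposal is correct and follows essentially the same route as the paper's own proof: reduce to counting lattice points via Proposition~\ref{Prop:COdialation}, fiber both $\mathcal{CO}_{U_1,U_2}(N\blambda)$ and $\mathcal{O}_{P,A}(N\blambda)$ over the common base $\mathcal{O}_{A_1,A}(N\blambda)$ using Lemmas~\ref{Lem:pr1} and~\ref{Lem:Decomposition}, and then apply Theorem~\ref{Thm:C/O}(1) to the fibers $\mathcal{C}_{P,A_1}((N\blambda)^{\bt})$ and $\mathcal{O}_{P,A_1}((N\blambda)^{\bt})$ term by term. Your write-up is in fact slightly more careful than the paper's in making explicit that the sums run over lattice points of the base and that the induced marking depends only on $\pi_1$ of the point.
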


That is equivalent to: for any admissible decomposition $(U_1,U_2)$ of the marked poset $(P,A,\blambda)$ and $n \in\mathbb{N}$:
$$
|S_{U_1, U_2} (n\blambda)|  = |S_{P \setminus A, \emptyset}( n\blambda)| .
$$

\begin{remark}\label{rem-abs}
The theorem holds for the special case of marked order and marked chain polytopes (see Theorem \ref{Thm:C/O} (1)), i.e., for the datum $(U_1,U_2)=(\emptyset, P \setminus A)$. This has been shown in \cite{ABS11} by constructing an explicit piecewise linear, affine bijection that respects integral points. This map $\varphi: \mathbb{R}^{|P \setminus A|} \longrightarrow \mathbb{R}^{|P\setminus A|}$ defined for any point $(x_p) \in \mathcal{O}_{P,A}(\blambda)$ by
$$
\varphi(x)_p = \operatorname{min }  \{x_p - x_q \mid p > q, q \notin A\} \cup \{x_p - \lambda_q \mid p > q, q \in A \}.
$$
Then $\varphi(x)_p \in  \mathcal{C}_{P, A}(\blambda)$ and in fact this is a bijection.
\end{remark}
\begin{proof}[Proof of the theorem]
It suffices to show that for any $n\in\mathbb{N}$, 
$$|S_{U_1, U_2} (n\blambda)|  = |S_{P \setminus A, \emptyset}( n\blambda)|.$$ 
Denote as before $A_1 = A \cup U_1$.
By Lemma \ref{Lem:pr1} and \ref{Lem:Decomposition}, for any marking $\mu\in\mathbb{Z}_{\geq 0}^{|A|}$ of $A$,
$$|S_{U_1,U_2}(\bmu)|=\sum_{\bs\in\mathcal{O}_{A_1,A}(\bmu)}|S(\mathcal{C}_{P,A_1}(\bmu^{\bs}))|,$$
and
$$|S_{P\setminus A,\emptyset}(\bmu)|=\sum_{\bs\in\mathcal{O}_{A_1,A}(\bmu)} |S(\mathcal{O}_{P,A_1}(\bmu^{\bs}))|.$$
Then the theorem holds by Theorem \ref{Thm:C/O} (1), since 
$$ |S(\mathcal{C}_{P,A_1}(\bmu^{\bs}))| =  |S(\mathcal{O}_{P,A_1}(\bmu^{\bs}))|.$$
\end{proof}

\begin{remark}
In fact this theorem is true for layered marked poset polytopes as will be shown in \cite{FF15}.
\end{remark}

\section{Isomorphic polytopes}\label{sec5}
Let $(P,A,\blambda)$ be a marked poset and $(U_1,U_2)$, $(V_1,V_2)$ be two different admissible decompositions. It is natural to ask whether $\mathcal{CO}_{U_1,U_2}(\blambda)$ and $\mathcal{CO}_{V_1,V_2}(\blambda)$ are unimodular equivalent. We will answer this question in this section.

\subsection{Regular marked poset}

We recall the concept of regular marked posets in \cite{Fou15a}.

\begin{definition}\label{def:regular}
A marked poset $(P,A,\blambda)$ is called \emph{regular} if
\begin{enumerate}
\item there does not exist $a,b\in A$ such that $a\ra b$;
\item for any $a\neq b\in A$, $\blambda_a\neq\blambda_b$;
\item if for $a\in A$ and $x\in P\setminus A$, $a\ra x$, then there does not exist $b\in A$ such that $b\prec x$ and $\blambda_a<\blambda_b$;
\item if for $a\in A$ and $x\in P\setminus A$, $x\ra a$, then there does not exist $b\in A$ such that $x\prec b$ and $\blambda_b<\blambda_a$.
\end{enumerate}
\end{definition}

The following proposition is proved in Section 3 of \cite{Fou15a}.

\begin{proposition}[\cite{Fou15a}]\label{Prop:regular}
For any marked poset $(P,A,\blambda)$ there exists a regular marked poset $(P^r,A^r,\blambda^r)$ such that:
\begin{enumerate}
\item $\mathcal{O}_{P,A}(\blambda)$ is unimodular equivalent to $\mathcal{O}_{P^r,A^r}(\blambda^r)$.
\item $\mathcal{C}_{P,A}(\blambda)$ is unimodular equivalent to $\mathcal{C}_{P^r,A^r}(\blambda^r)$.
\end{enumerate}
In particular, the number of facets of these polytopes coincides.
\end{proposition}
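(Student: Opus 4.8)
The plan is to obtain $(P^r,A^r,\blambda^r)$ as the end product of a finite sequence of elementary modifications of $(P,A,\blambda)$, each of which removes one violation of the regularity conditions while leaving both $\mathcal{O}_{P,A}(\blambda)$ and $\mathcal{C}_{P,A}(\blambda)$ unchanged up to unimodular equivalence. The guiding principle is that each regularity condition, when violated, produces a \emph{redundant} defining inequality, so that deleting the corresponding feature of the marked poset does not alter the polytope, or alters it only by a coordinate projection, which is unimodular. Concretely I would use three moves: (i) deleting a cover edge whose associated inequality is redundant; (ii) projecting away a coordinate that is forced to a constant on the whole polytope; (iii) identifying two incomparable marked elements carrying the same value.

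For the four conditions the moves run as follows. A violation of (1), a cover $a\ra b$ with $a,b\in A$, contributes only the constant inequality $\blambda_a\le\blambda_b$ (an empty chain on the chain side), so deleting the edge $a\ra b$ leaves both polytopes literally unchanged. A violation of (3) [resp.\ (4)] gives $x\in P\setminus A$ with $a\ra x$ and a further marked $b\prec x$, $\blambda_a<\blambda_b$ [resp.\ the dual]; then the bound $x_x\ge\blambda_a$ in $\mathcal{O}$ is dominated by $x_x\ge\blambda_b$, and every maximal chain bottoming at $a$ through $x$ in $\mathcal{C}$ is dominated by the corresponding chain bottoming at $b$, since $\blambda_{\mathbf{p}_a}-\blambda_a>\blambda_{\mathbf{p}_a}-\blambda_b$. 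Hence deleting $a\ra x$ removes only redundant inequalities. Finally a violation of (2), $\blambda_a=\blambda_b$ with $a\ne b$, is treated by cases: if $a,b$ are incomparable I merge them into one marked element $c$ with $\blambda_c=\blambda_a$ (no cycle can arise, since a cycle would force $a,b$ comparable), and the bounds and chain sums attached to $a$ and to $b$ coincide, so both polytopes are unchanged; if $a\prec b$ then, after clearing (1), there is an element strictly between them, which (a marked intermediate again has value $\blambda_a$ and we recurse) may be taken non-marked, say $x$ with $a\prec x\prec b$. This $x$ is forced to $x_x=\blambda_a$ in $\mathcal{O}$ and to $x_x=0$ in $\mathcal{C}$ (its chain sum is bounded by $\blambda_b-\blambda_a=0$), so projecting away this coordinate is a unimodular equivalence of each polytope.

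Termination I would control with the lexicographic measure $(|A|,\,|P\setminus A|,\,\#\text{edges})$: merges strictly decrease $|A|$, forced-coordinate projections keep $|A|$ and decrease $|P\setminus A|$, and edge deletions keep the first two entries and decrease the number of cover edges. One must also check that the output is a genuine marked poset: $A^r$ still contains all extremal elements (any element turned extremal by a deletion or a merge is already marked; e.g.\ the $x$ of condition (3) keeps $b$ below it and so never becomes minimal), and $\blambda^r$ stays a monotone $\mathbb{Z}_{\ge0}$-marking. When no move applies, all four conditions hold, giving a regular $(P^r,A^r,\blambda^r)$; composing the per-step equivalences yields the two asserted unimodular equivalences, and the final sentence follows since a unimodular equivalence preserves the whole face lattice, in particular the number of facets.

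The step I expect to be the main obstacle is the \emph{chain}-polytope half of each move. On the order side redundancy is transparent, as the inequalities are indexed by single cover relations; the chain inequalities, by contrast, are indexed by entire maximal chains, so one edge deletion can touch many of them simultaneously, and one must also rule out that the deletion creates genuinely new binding chain inequalities. Proving that exactly the redundant ones disappear requires a uniform domination argument pairing each maximal chain through the deleted feature with a maximal chain through the surviving, tighter one. Note finally that one cannot transport the order-polytope equivalences through the Ardila--Bliem--Salazar map $\varphi$ of Remark~\ref{rem-abs}, since $\varphi$ is only piecewise-linear and hence yields Ehrhart but not unimodular equivalence; the chain side genuinely has to be checked move by move.
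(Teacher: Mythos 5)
Your proof is correct and takes essentially the same route as the paper's source: the paper itself defers the argument to Section~3 of \cite{Fou15a}, and Remark~\ref{Rmk:regular} describes that proof as consisting of precisely your moves --- retracting chains between two marked vertices carrying the same marking (your forced-coordinate projections and merges for condition (2)) and retracting redundant marked-vertex features (your edge deletions for conditions (1), (3), (4)). The chain-side redundancy verification you flag as the main obstacle is exactly the content supplied in \cite{Fou15a} and not reproduced in this paper; your instinct that it needs a chain-splitting/domination argument (rather than transport through the piecewise-linear Ardila--Bliem--Salazar map) is the right one.
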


\begin{remark}\label{Rmk:regular}
In the proof of the proposition, to obtain the regular marked poset, we either retract a chain between two marked vertices having the same marking or retract some marked vertices.
\end{remark}

Let $(U_1,U_2)$ be an admissible decomposition of the marked poset $(P,A,\blambda)$. The following proposition is clear by the remark above.

\begin{proposition}\label{Prop:regulardec}
There exists a regular marked poset $(P^r,A^r,\blambda^r)$ with an admissible decomposition $(U_1^r,U_2^r)$ such that $\mathcal{CO}_{U_1,U_2}(\blambda)$ is unimodular equivalent to $\mathcal{CO}_{U_1^r,U_2^r}(\blambda^r)$.
\end{proposition}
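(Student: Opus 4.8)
The plan is to revisit the regularization procedure underlying Proposition~\ref{Prop:regular}, as summarized in Remark~\ref{Rmk:regular}, and to check that each of its elementary moves can be performed compatibly with the fixed admissible decomposition $(U_1,U_2)$. Recall that $(P^r,A^r,\blambda^r)$ is built from $(P,A,\blambda)$ by finitely many moves of two types: (i) retracting a chain joining two marked vertices carrying the same marking, and (ii) retracting a redundant marked vertex, i.e. one whose bounding inequality is forced by condition (3) or (4) of Definition~\ref{def:regular}. Since every such move acts only on the marked part $A$ and on the relations incident to it, I would set $(U_1^r,U_2^r)$ to be the decomposition induced on the surviving non-marked elements and analyse the two move types in turn. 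Admissibility of $(U_1^r,U_2^r)$ is immediate in both cases: the moves only delete elements and restrict $\prec$, so no new comparability $u_1\prec u_2$ with $u_1\in U_1$, $u_2\in U_2$ can be created.

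The content lies in move (i). Suppose $a,b\in A$ with $\blambda_a=\blambda_b$ are joined by a chain $a\prec p_1\prec\cdots\prec p_s\prec b$ whose interior lies in $P\setminus A$. By admissibility the $U_2$-elements among the $p_i$ form an initial (lower) segment and the $U_1$-elements a final (upper) segment. I would show that every coordinate along this chain is \emph{frozen} in $\mathcal{CO}_{U_1,U_2}(\blambda)$. If the chain meets $U_1$, let $p$ be its lowest $U_1$-element: through the order block above it one has $x_p\leq\blambda_b=\blambda_a$, while the $U_2$-chain running from $a$ up to $p$ gives $\sum_{q} x_q\leq x_p-\blambda_a$; combined with $x_q\geq 0$ this forces $x_p=\blambda_a$ and all intervening $x_q=0$, and then the remaining $U_1$-coordinates equal $\blambda_a$ as well. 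If the chain lies entirely in $U_2$, the single chain inequality already reads $\sum_{q} x_q\leq\blambda_b-\blambda_a=0$, freezing those coordinates to $0$. Thus the coordinate projection used in \cite{Fou15a} to realize the unimodular equivalences for $\mathcal{O}$ and for $\mathcal{C}$ --- forgetting exactly the frozen coordinates --- restricts to a unimodular equivalence $\mathcal{CO}_{U_1,U_2}(\blambda)\cong\mathcal{CO}_{U_1^r,U_2^r}(\blambda^r)$ on the hybrid polytope.

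Move (ii) is lighter: removing a redundant marked vertex leaves the set $P\setminus A$, its induced order, and hence $(U_1,U_2)$ unchanged, and merely discards a defining inequality that conditions (3)/(4) guarantee is implied by a surviving one; the induced map on $\mathbb{R}^{|P\setminus A|}$ is the identity and the polytope is literally unchanged. Composing the moves yields $(P^r,A^r,\blambda^r)$, its admissible decomposition $(U_1^r,U_2^r)$, and the desired unimodular equivalence. The main obstacle is the bookkeeping in move (i): one must verify that collapsing an equal-marking chain not only freezes the order-block coordinates but also degenerates every affected $U_2$-chain inequality to either a triviality or a copy of a surviving inequality, so that the facet structure of $\mathcal{CO}_{U_1^r,U_2^r}(\blambda^r)$ is faithfully recovered. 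This is exactly where admissibility is essential, since it forces the $U_2$-segment of the retracted chain to be an initial segment whose chain-sum inequality degenerates cleanly.
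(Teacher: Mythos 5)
Your overall strategy mirrors the paper's own (very terse) proof: rerun the regularization moves of \cite{Fou15a} summarized in Remark~\ref{Rmk:regular} and check them against the fixed admissible decomposition, and your freezing argument along an equal-marking chain is correct and is indeed the reason the retraction respects the polytope. The genuine gap is in the step you declare immediate, namely that ``the moves only delete elements and restrict $\prec$, so no new comparability $u_1\prec u_2$ can be created.'' This is false for move (i): to restore condition (2) of Definition~\ref{def:regular} the two equally marked endpoints $a\prec b$ must be \emph{identified} in $P^r$ (deleting one of them is not an option, since its inequalities are not redundant, and keeping both leaves the poset non-regular), and identifying them can create new comparabilities among the \emph{surviving} unmarked elements, precisely of the forbidden type.

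Here is a concrete failure. Take marked elements $c,a,b,d$ with $\blambda_c=0$, $\blambda_a=\blambda_b=1$, $\blambda_d=2$, unmarked elements $u,p_2\in U_1$ and $p_1,v\in U_2$, and cover relations
\[
c\ra u\ra p_2\ra b,\qquad a\ra p_1\ra p_2,\qquad p_1\ra v\ra d.
\]
All extremal elements are marked, and $(U_1,U_2)$ is admissible in $P$: the only elements above $u$ are $p_2,b$ and the only element above $p_2$ is $b$, so no element of $U_1$ lies below an element of $U_2$; in particular $u\not\prec v$. As in your argument, $x_{p_2}=1$ and $x_{p_1}=0$ are frozen, and $\mathcal{CO}_{U_1,U_2}(\blambda)$ is a unit square in the coordinates $(x_u,x_v)$. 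The only regularity violation is $\blambda_a=\blambda_b$, so the regularization retracts the chain $a\prec p_1\prec p_2\prec b$, producing the five-element chain $P^r=\{c\prec u\prec \ast\prec v\prec d\}$ with $\blambda_\ast=1$. Now $u\prec v$ holds in $P^r$ (through $\ast$) although it failed in $P$, so your induced decomposition $(U_1^r,U_2^r)=(\{u\},\{v\})$ is \emph{not} admissible, and the object $\mathcal{CO}_{U_1^r,U_2^r}(\blambda^r)$ you want to compare with is not even defined within the paper's framework. The proposition itself survives --- for instance $(\{u,v\},\emptyset)$ on $P^r$ yields a translate of the same square --- but this requires an extra idea your proof does not supply: after regularization one must re-assign the offending elements between the order and chain sides and prove that this re-assignment still gives a unimodular equivalent polytope (an argument in the spirit of Theorem~\ref{Thm:unimodular}, which however only compares admissible decompositions of one and the same poset). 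Without that repair step, the proof is incomplete.
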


\subsection{Facets of marked chain-order polytopes}

By Proposition \ref{Prop:regulardec}, in aim of counting facets, we may suppose that the marked poset $(P,A,\blambda)$ is regular. We keep this hypothesis in this paragraph.
\begin{proposition}\label{Prop:facetcount}
The number of facets in $ \mathcal{CO}_{U_1, U_2}( \blambda)$ is 
\begin{small}
\begin{eqnarray*}
& & \#\{p\ra q\ |\ p,q\in A_1\}\\
&+&\# U_2\\
&+& \#\{\text{maximal chains $q\rightarrow p_1 \rightarrow \ldots \rightarrow p_s \rightarrow r$ where $s\geq 1$, $q,r\in A_1$ and $p_i\in U_2$}\}.
\end{eqnarray*}
\end{small}
\end{proposition}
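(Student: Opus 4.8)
The plan is to use that, since the list of defining inequalities of $\mathcal{CO}_{U_1,U_2}(\blambda)$ is explicit, once we know the polytope is full-dimensional its facets are in bijection with the irredundant members of that list; the three summands in the statement will then be matched with three families of irredundant inequalities. So I would first check full-dimensionality: using regularity (Definition~\ref{def:regular})---in particular the pairwise distinctness of the markings together with conditions (3),(4), which guarantee strictly positive slack in the order and chain bounds---one exhibits a single point of $\mathcal{CO}_{U_1,U_2}(\blambda)$ satisfying every defining inequality strictly, whence $\dim\mathcal{CO}_{U_1,U_2}(\blambda)=|P\setminus A|$ and every facet is cut out by exactly one inequality from the definition.

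Next I would split the defining inequalities into three families and carry out the redundancy analysis. The first family consists of the order inequalities among $A_1=A\cup U_1$ (the relations $x_p\le x_q$, $x_p\le\blambda_a$, $\blambda_b\le x_q$); by Lemma~\ref{Lem:pr1} these are exactly the inequalities defining the marked order polytope $\mathcal{O}_{A_1,A}(\blambda)=\pi_1(\mathcal{CO}_{U_1,U_2}(\blambda))$. Any order relation passing through an intermediate element of $A_1$ is implied, via transitivity and the regularity conditions (which force the intervening marked bounds to point the right way), by the cover relations into which it decomposes; hence only cover relations $p\ra q$ in $A_1$ remain, giving the first summand. The second family is the collection of nonnegativity conditions $x_p\ge 0$ for $p\in U_2$, which are pairwise distinct and none redundant, giving the second summand. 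The third family is the chain inequalities over $U_2$-chains: exactly as for ordinary chain polytopes, the inequality of a non-maximal $U_2$-chain is dominated by that of a maximal $U_2$-chain containing it together with the relevant nonnegativity conditions, so only maximal chains $q\ra p_1\ra\ldots\ra p_s\ra r$ with $q,r\in A_1$ and $p_i\in U_2$ survive, giving the third summand.

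It then remains to certify that each surviving inequality genuinely defines a facet and that distinct inequalities give distinct facets; distinctness is immediate from regularity, since the defining hyperplanes are then pairwise distinct. For facet-definingness I would exploit the fibration $\pi_1\colon\mathcal{CO}_{U_1,U_2}(\blambda)\ra\mathcal{O}_{A_1,A}(\blambda)$, whose fibre over a point $\bs$ is the marked chain polytope $\mathcal{C}_{P,A_1}(\blambda^{\bs})$ (Lemma~\ref{Lem:Decomposition}). Over the relative interior of the base the fibre is full-dimensional of dimension $|U_2|$, so the preimages of the facets of $\mathcal{O}_{A_1,A}(\blambda)$ are codimension-one faces of the total polytope (the order facets), while the facets of the generic fibre---its nonnegativity and maximal-chain facets---sweep out codimension-one faces as $\bs$ ranges over the base (the chain facets). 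The same construction yields the ``one-tight'' certificates: one saturates a single chosen inequality either in the base or in the fibre while taking a point in the relative interior of the complementary factor.

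I expect the main obstacle to be precisely this last step: showing that a facet of the fibre chain polytope sweeps out an honest facet of the total polytope, rather than degenerating or breaking into pieces. This requires the combinatorial facet type of the fibre $\mathcal{C}_{P,A_1}(\blambda^{\bs})$ to stay constant over the interior of the base, which one obtains from the regularity of $(P,A,\blambda)$ together with the strictness of $\bs$; the known facet descriptions of regular marked order and marked chain polytopes then guarantee simultaneously that the three families are irredundant, that they are pairwise distinct, and that they exhaust all facets, so that the total count is exactly the claimed sum.
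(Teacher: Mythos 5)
Your overall plan (split the defining inequalities into three families, discard redundant ones, then certify irredundancy of the survivors via the fibration $\pi_1$) is sensible -- and for what it is worth, the paper states this proposition without any proof, so there is no argument of the authors to compare against. However, your treatment of the first family contains a genuine error, and it propagates into the fibration step. You analyze redundancy among the order inequalities on $A_1=A\cup U_1$ using only transitivity \emph{within} $A_1$, and you then assert that every facet of the base $\mathcal{O}_{A_1,A}(\blambda)$ pulls back under $\pi_1$ to a facet of $\mathcal{CO}_{U_1,U_2}(\blambda)$. Both claims fail whenever two elements of $A_1$ are comparable in $P$ only through intermediate elements of $U_2$: then the order inequality between them is a cover relation of the \emph{induced} poset $A_1$ (hence facet-defining for the base polytope), yet in the total polytope it is redundant, being implied by the corresponding chain inequality together with nonnegativity of the $U_2$-variables (the chain inequality $\sum x_{p_i}\leq \blambda_{\mathbf{p}_a}-\blambda_{\mathbf{p}_b}$ has the $U_1$-coordinates on its right-hand side, so the order and chain families interact and cannot be analyzed independently, base and fibre). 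Note that the proposition counts $\#\{p\ra q \mid p,q\in A_1\}$ with $\ra$ the cover relation of $P$, not of the induced poset $A_1$; this distinction is precisely what your argument misses.

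Concretely, take the chain $P=\{a\prec z\prec q\prec b\}$ with $A=\{a,b\}$, $\blambda_a=0$, $\blambda_b=2$ (a regular marked poset) and the admissible decomposition $U_1=\{q\}$, $U_2=\{z\}$. The defining inequalities are $0\leq x_q\leq 2$, $x_z\geq 0$, $x_z\leq x_q$, $x_z\leq 2$, so $\mathcal{CO}_{U_1,U_2}(\blambda)$ is the triangle with vertices $(0,0)$, $(2,0)$, $(2,2)$: it has $3$ facets, matching the proposition (one $P$-cover $q\ra b$ inside $A_1$, plus $\#U_2=1$, plus the single maximal chain $a\ra z\ra q$; the relation $a\prec q$ is \emph{not} a $P$-cover). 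Your method yields $4$: the base $\mathcal{O}_{A_1,A}(\blambda)=[0,2]$ has two facets and the generic fibre has two, but the preimage of the base facet $\{x_q=0\}$ is the single vertex $(0,0)$, because $x_q=0$ forces $x_z=0$; it is not a codimension-one face. To repair the proof you must show that an order inequality $x_p\leq x_q$ with $p,q\in A_1$ survives exactly when $q$ covers $p$ in $P$ (redundancy otherwise coming either from transitivity in $A_1$ or from a $U_2$-chain plus nonnegativity), and restrict the base-facet part of your sweeping argument to those covers; the fibre part of your argument (nonnegativity and maximal $U_2$-chain facets) is the portion that can be salvaged essentially as written.
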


Suppose that $(V_1,V_2)$ is another admissible decomposition of $(P,A,\blambda)$ such that 
$$V_2 \cap \St(P) = (U_2 \cap \St(P) )\cup \{q\},$$ 

\begin{corollary}\label{Cor:SHLfacet}
The difference of the number of the facets in $\mathcal{CO}_{V_1, V_2}(\blambda)$ and the number of facets in $\mathcal{CO}_{U_1, U_2}( \blambda)$ is $(|q \rightarrow | - 1)(| \rightsquigarrow q| -1)$ .
\end{corollary}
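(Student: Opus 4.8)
The plan is to reduce to the regular case and then read off both facet numbers from Proposition~\ref{Prop:facetcount}, after arranging the two decompositions to differ as little as possible.

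First I would use Proposition~\ref{Prop:regulardec} to assume $(P,A,\blambda)$ is regular, so that Proposition~\ref{Prop:facetcount} computes the exact number of facets. Next, since the criterion for unimodular equivalence proved above shows that this number depends only on the intersection of the chain part with $\St(P)$, I may replace $(U_1,U_2)$ and $(V_1,V_2)$ by convenient representatives of their classes; concretely I take them to differ only in $q$, i.e. $V_1=U_1\setminus\{q\}$ and $V_2=U_2\cup\{q\}$, with $q\in U_1\cap\St(P)$. Writing $A_1=A\cup U_1$ and $A_1'=A\cup V_1=A_1\setminus\{q\}$, I would check that this move preserves admissibility: because $q$ lay in the order part $U_1$, admissibility of $(U_1,U_2)$ forces $q\ra\subseteq A_1$, and admissibility of $(V_1,V_2)$ holds because every unmarked element below $q$ is absorbed into the chain part.

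Then I would subtract the three contributions of Proposition~\ref{Prop:facetcount} term by term. The term $\#U_2$ contributes $+1$. In the cover term only covers inside $A_1$ with an endpoint $q$ disappear; since $q\ra\subseteq A_1'$, these are the $|q\ra|$ covers $q\ra d$ together with the covers $c\ra q$ with $c\in A_1'$, say $\ell$ of the latter, for a net change of $-(|q\ra|+\ell)$. The heart of the computation is the maximal-chain term. A maximal $V_2$-chain that is not already a $U_2$-chain must use $q$ as an interior vertex, and so splits at $q$ into a lower segment ending at $q$ and an upper segment starting at $q$. Admissibility on the order side forces each upper segment to be a single cover $q\ra d$ with $d\in A_1'$, giving exactly $|q\ra|$ of them, while the lower segments are exactly the maximal chains ending at $q$, of which there are $|\rightsquigarrow q|$. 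Thus we create $|q\ra|\cdot|\rightsquigarrow q|$ new chains and destroy the old $U_2$-chains having $q$ as their top endpoint, namely the lower segments of length at least two, of which there are $|\rightsquigarrow q|-\ell$ (the direct covers $c\ra q$ being the remaining $\ell$).

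Adding the three changes gives $1-(|q\ra|+\ell)+(|q\ra|\cdot|\rightsquigarrow q|-(|\rightsquigarrow q|-\ell))$; the quantity $\ell$ cancels and the result factors as $(|q\ra|-1)(|\rightsquigarrow q|-1)$. I expect the main obstacle to be precisely the bijection in the chain term: one must show carefully that the lower segments created by moving $q$ into the chain part correspond exactly to the maximal chains counted by $|\rightsquigarrow q|$, and that no $U_2$-chain has $q$ as its \emph{bottom} endpoint (the lower endpoints of $U_2$-chains being forced to be marked by admissibility). The cancellation of $\ell$ is what ultimately makes the facet difference depend only on the two star invariants $|q\ra|$ and $|\rightsquigarrow q|$ of $q$.
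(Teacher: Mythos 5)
Your overall strategy---reduce to the regular case, use Theorem~\ref{Thm:unimodular} to replace the two decompositions by representatives differing exactly in $q$, and subtract the three terms of Proposition~\ref{Prop:facetcount}---is precisely the computation the paper leaves implicit, and most of your bookkeeping is right: the $\#U_2$-term contributes $+1$, the cover term contributes $-(|q\ra|+\ell)$, the upper segments of new chains are single covers $q\ra d$ by admissibility, no $U_2$-chain has $q$ as bottom endpoint, and the cancellation of $\ell$ is correct. (One smaller point you assert rather than prove: the existence of an admissible representative with $V_1=U_1\setminus\{q\}$ requires pushing all unmarked elements below $q$ into $U_2$ first; this is legitimate because admissibility of the originally given $(V_1,V_2)$ forces every unmarked \emph{star} element below $q$ into $V_2\cap\St(P)\setminus\{q\}=U_2\cap\St(P)$, and the remaining non-star elements can be moved by Theorem~\ref{Thm:unimodular} without changing facet numbers.)

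The genuine gap is the step you yourself flag as the main obstacle: the identification of the lower segments with the maximal chains counted by $|\rightsquigarrow q|$. A lower segment is a saturated chain ending at $q$ whose bottom lies in $A$ and whose interior is unmarked; a maximal chain ending in $q$, as defined in Section~\ref{sec2}, starts at a \emph{minimal} element of $P$ and may pass through non-minimal marked elements. Whenever some marked element $a\prec q$ is not minimal, every lower segment with bottom $a$ extends to $|\rightsquigarrow a|$ distinct maximal chains, so the two counts disagree. This already happens in the paper's motivating Gelfand-Tsetlin poset: in $P_4$ with $q=p_{1,2}$ there are $4$ lower segments (bottoms $p_{0,2}$, $p_{0,3}$, and $p_{0,4}$ twice) but $5$ maximal chains ending at $q$, and applying Proposition~\ref{Prop:facetcount} to $U_2=\{p_{1,3},p_{2,3},p_{1,4},p_{2,4},p_{3,4}\}$ versus $V_2=U_2\cup\{p_{1,2}\}$ gives facet numbers $23$ and $26$, so the difference is $3=(2-1)(4-1)$ and not $(2-1)(5-1)=4$. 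Thus what your computation actually proves is that the difference equals $(|q\ra|-1)(\#D_q-1)$, where $D_q$ is the set of saturated chains ending at $q$ with marked bottom and unmarked interior; equating $\#D_q$ with $|\rightsquigarrow q|$ is false under the paper's literal definition. To recover the corollary as stated you must adopt (and state) the convention that maximal chains ending in $q$ terminate when they reach $A$---which is how the paper itself uses the notion in the proof of Theorem~\ref{Thm:unimodular}, where the unique maximal chain ending in $p$ is written $a\prec q_t\prec\cdots\prec q_1\prec p$ with $a\in A$---rather than at minimal elements of $P$.
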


\subsection{Unimodular equivalence}

The following theorem answers the question in the beginning of this section.

\begin{theorem}\label{Thm:unimodular}
Let $(U_1,U_2)$ and $(V_1,V_2)$ be two different admissible decompositions of the marked poset $(P,A,\blambda)$. If $U_1 \cap \St(P) = V_1 \cap \St(P)$, then the chain-order polytopes $\mathcal{CO}_{U_1, U_2}( \blambda)$ and $\mathcal{CO}_{V_1, V_2}(\blambda)$ are unimodular equivalent.\end{theorem}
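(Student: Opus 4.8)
The plan is to reduce the general claim to a sequence of \emph{elementary moves} between admissible decompositions that preserve the star-element intersection $U_1 \cap \St(P)$, and to exhibit an explicit unimodular map for each such move. Concretely, suppose $(U_1, U_2)$ and $(V_1, V_2)$ satisfy $U_1 \cap \St(P) = V_1 \cap \St(P)$. I would first argue that any two admissible decompositions with the same star-intersection can be connected by a chain of admissible decompositions, each differing from the previous one by moving a \emph{single non-star} element $p \in P \setminus (A \cup \St(P))$ between the two parts. Admissibility must be preserved at every step: when moving $p$ from $U_2$ to $U_1$ (or vice versa) one has to check that no $U_1$-element becomes covered by a $U_2$-element, which is where I expect to use that $p$ is not a star element, i.e.\ either $\#(p\!\ra)\leq 1$ or $\#(\rightsquigarrow\! p)\leq 1$.

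Having reduced to a single move, the heart of the argument is to build a unimodular (i.e.\ lattice-preserving affine) bijection between $\mathcal{CO}_{U_1, U_2}(\blambda)$ and $\mathcal{CO}_{U_1', U_2'}(\blambda)$ where the two decompositions differ only in the location of one non-star element $p$. The natural candidate is a \emph{local} version of the Ardila--Bliem--Salazar transfer map recalled in Remark~\ref{rem-abs}: the full map $\varphi$ converts all order coordinates to chain coordinates, so I would isolate the single coordinate $x_p$ and apply the piecewise-linear substitution only there. Explicitly, when $p$ is being turned from an order element into a chain element, one replaces $x_p$ by the expression $\min\{x_p - x_q\}$ over the elements $q$ covered by $p$ (with marked values $\lambda_q$ substituted when $q \in A$), leaving all other coordinates fixed; the inverse is the corresponding ``running sum'' map. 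Because the other coordinates are untouched, the transformation is triangular with $\pm 1$ diagonal entries on each linearity chamber, hence unimodular on lattice points.

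The key technical point is to verify that this single-coordinate substitution is well-defined and bijective, and this is precisely where the non-star hypothesis does the work. The transfer map for a single coordinate is a clean bijection exactly when the combinatorics around $p$ is ``tame'': if $p$ has a unique cover or lies on a unique maximal chain, then the $\min$ (resp.\ the chain sums in $\mathbb{D}_{U_2}$) degenerates so that flipping $p$ between order and chain type changes the defining inequalities in an invertible, lattice-preserving way. I would make this precise by comparing the two systems of inequalities from Definition~\ref{def:chain-order} side by side and checking that the substitution carries each inequality of one polytope to an inequality of the other and conversely. It is convenient here to invoke the decomposition/projection machinery of Lemma~\ref{Lem:pr1} and Lemma~\ref{Lem:Decomposition}: the $\pi_1$-images agree (both equal $\mathcal{O}_{A_1,A}(\blambda)$ when the moved element is of order type), and over each fiber one is comparing a marked chain polytope with a marked order polytope differing by one coordinate, so the fiberwise map is again a restricted ABS transfer.

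The main obstacle I anticipate is the connectivity step: showing that \emph{every} pair of admissible decompositions sharing the same star-intersection is joinable by admissible single-element moves, \emph{without} ever being forced to move a star element. One must rule out the situation where an element $p$ is ``trapped'' — unable to cross from $U_2$ to $U_1$ because doing so would violate admissibility regardless of the order in which other non-star elements are moved. I would handle this by processing elements in a linear extension of $\prec$ (moving minimal-in-$U_2$ elements up first, or maximal-in-$U_1$ elements down first), so that admissibility is maintained as an invariant throughout the sequence; the characterization of star elements via $\#(p\!\ra)$ and $\#(\rightsquigarrow\! p)$ guarantees that any non-star obstruction can be resolved by reordering the moves. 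Once connectivity and the single-move unimodular equivalence are both in hand, the theorem follows by composing the maps along the connecting sequence.
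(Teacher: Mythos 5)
Your overall skeleton does match the paper's: reduce to two admissible decompositions differing in a single non-star element $p$, then exhibit an explicit lattice-preserving map for that one move (your connectivity worry is real but resolvable --- pass through the admissible decomposition with order part $U_1\cap V_1$, at each step moving a \emph{minimal} element of the current order part down, equivalently a \emph{maximal} element of the current chain part up; note that the orderings you propose are stated backwards). The gaps are in the single move itself. First, a conceptual one: a piecewise-linear bijection that is ``triangular with $\pm 1$ diagonal entries on each linearity chamber'' and preserves lattice points is \emph{not} a unimodular equivalence; that notion requires one affine map, with integral matrix of determinant $\pm 1$, valid on the whole polytope. If chamber-wise unimodularity sufficed, the full Ardila--Bliem--Salazar transfer map of Remark~\ref{rem-abs} would make $\mathcal{O}_{P,A}(\blambda)$ and $\mathcal{C}_{P,A}(\blambda)$ unimodular equivalent for \emph{every} marked poset, contradicting the ``only if'' direction of Corollary~\ref{Cor:unimodular} (by Proposition~\ref{Prop:facetcount} the facet numbers differ once a star element changes sides). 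A piecewise-linear lattice bijection only yields Ehrhart equivalence, i.e.\ Theorem~\ref{Thm:Ehrhart}, not the statement at hand; the entire burden of the proof is to produce an honest affine map, which is exactly where the non-star hypothesis must enter.

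Second, your candidate map is incorrect even as a set map, and it cannot cover one of the two cases of ``non-star''. In a single admissible move, every element of $P\setminus A$ lying below $p$ is forced into $U_2$, so its coordinate is already a chain (difference) coordinate; hence replacing $x_p$ by $\min\{x_p-x_q\}$ over the covered elements $q$ is not the transfer. Concretely, in the $5$-chain of Remark~\ref{remHLLMT} ($0\prec r\prec q\prec p\prec 1$, $U_1=\{p\}$, $U_2=\{q,r\}$), the point $(x_p,x_q,x_r)=(1,0,1)$ lies in $\mathcal{CO}_{U_1,U_2}(\blambda)$, but your map fixes it, and it violates $x_p+x_q+x_r\leq 1$ in the chain polytope. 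The correct affine map subtracts the whole accumulated chain, $x_p\mapsto x_p-(x_{q_1}+\cdots+x_{q_t}+\lambda_a)$ along the unique maximal chain $a\prec q_t\prec\cdots\prec q_1\prec p$; this is the paper's Case (1), and it is affine precisely because that chain is unique. Moreover, ``$p$ has a unique cover'' does \emph{not} make your $\min$ degenerate: if $p$ has one cover $r$ but several maximal chains ending in it (say two incomparable chain elements covered by $p$), the $\min$ over covered elements genuinely has several linear pieces, and no affine ``downward'' transfer exists at all. The paper handles this case with a different map, the reflection $\Psi(x)_p=x_r-x_p$ at the unique cover $r$; nothing in your proposal plays this role. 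As written, the proposal therefore proves at most Ehrhart equivalence, and in the unique-cover case it offers no correct affine map.
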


\begin{proof}
Notice that $U_1 \cap \St(P) = V_1 \cap \St(P)$ is equivalent to $U_2 \cap \St(P) = V_2 \cap \St(P)$.
\par
By the hypothesis $U_1 \cap \St(P) = V_1 \cap \St(P)$, we may assume that $U_1 = V_1\cup \{p\}$, where $p \notin \St(P)$. Since $p \notin \St(P)$, there are exactly two cases to be considered:
\begin{enumerate}
\item There is exactly one maximal chain ending in $p$, say $a\prec q_t\prec\ldots\prec q_1\prec p$, with $q_i \in V_1$ and $a\in A$. Then we define a map 
$$
\Psi:\mathcal{CO}_{V_1, U_2 \cup \{p\}}( \blambda) \longrightarrow \mathcal{CO}_{U_1, U_2}( \blambda)
$$
by 
$$
\Psi(x)_q := \begin{cases} x_q, & \text{ if } q \neq p; \\ x_p -  x_{q_1} - \ldots - x_{q_t} - \lambda_a, & \text{ else. } \end{cases}
$$
This is certainly a unimodular equivalence once we have checked its bijectivity. For this we will see that facets are mapped to facets. For this let $p \prec q$, then the facet $x_p\leq x_q$ is mapped to the facet $x_{q_1} + \ldots  + x_{q_t} + x_p \leq x_q - \lambda_a$ (since $\Psi(x)_{q_1} + \ldots + \Psi(x)_{q_t} + \Psi(x)_p = x_p - \lambda_a \leq x_q - \lambda_a$). The facet $x_{q_1} + \ldots + x_{q_t} \leq x_q - \lambda_a$ is mapped to the facet $x_q \geq 0$.
\item There is exactly one element $r \in U_1$ covering $p$. Then we define a map 
$$
\Psi:\mathcal{CO}_{V_1, U_2 \cup \{p\}}(\blambda) \longrightarrow \mathcal{CO}_{U_1, U_2}( \blambda)
$$
by 
$$
\Psi(x)_q := \begin{cases} x_q, & \text{ if } q \neq p; \\  x_r  - x_p, & \text{ else. } \end{cases}
$$
Again, as before, it remains to check that we have a bijection on facets. Let $a\prec q_s\prec\ldots\prec q_1\prec p$ be a maximal chain ending in $p$, then the facet $x_{q_1} + \ldots +x_{q_s} \leq x_p - \lambda_a$ is mapped to $x_{q_1} + \ldots + x_{q_s} + x_p \leq x_r - \lambda_a$, while the facet $x_p \leq x_r$ is mapped to the facet $x_p \geq 0$.
\end{enumerate}
Thus in both cases we have defined a unimodular equivalence.
\end{proof}

\begin{corollary}\label{Cor:unimodular}
Let $(U_1,U_2)$ and $(V_1,V_2)$ be two different admissible decompositions of the marked poset $(P,A,\blambda)$ such that $U_1\subset V_1$. The chain-order polytopes $\mathcal{CO}_{U_1, U_2}( \blambda)$ and $\mathcal{CO}_{V_1, V_2}(\blambda)$ are unimodular equivalent if and only if
$$U_1 \cap \St(P) = V_1 \cap \St(P).$$
\end{corollary}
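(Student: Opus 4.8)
The \emph{if} direction requires no work beyond what is already available: if $U_1 \cap \St(P) = V_1 \cap \St(P)$, then Theorem \ref{Thm:unimodular} directly produces a unimodular equivalence between $\mathcal{CO}_{U_1, U_2}(\blambda)$ and $\mathcal{CO}_{V_1, V_2}(\blambda)$ (the containment $U_1 \subset V_1$ is not even needed here). So I would concentrate entirely on the \emph{only if} direction, proving its contrapositive: assuming $U_1 \subsetneq V_1$ with $U_1 \cap \St(P) \subsetneq V_1 \cap \St(P)$, I want to show the two polytopes are \emph{not} unimodular equivalent. The invariant I would exploit is the number of facets: a unimodular equivalence is in particular a combinatorial isomorphism, hence preserves all face numbers, so it suffices to show that the two facet counts differ.

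To compare the facet counts I would interpolate between the two decompositions by a chain of admissible decompositions differing in a single element at each step. Recall that admissibility of $(U_1, U_2)$ means exactly that $U_1$ is an up-set of $P \setminus A$ (if $u \in U_1$ and $u \prec z$ then $z \in U_1$), and likewise for $V_1$; since $U_1 \subseteq V_1$, I can list the elements of $W := V_1 \setminus U_1$ so that each is maximal among those not yet adjoined. Adjoining them to $U_1$ one at a time then keeps an up-set at every stage — because appending a maximal element of the remaining set to an up-set again yields an up-set — and so yields a chain of admissible decompositions $U_1 = W_0 \subset W_1 \subset \cdots \subset W_m = V_1$, where $W_{k+1} = W_k \cup \{w_k\}$ for a single element $w_k$.

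At the $k$-th step I would split into two cases according to whether $w_k \in \St(P)$. If $w_k \notin \St(P)$, the consecutive decompositions $(W_k, \cdot)$ and $(W_{k+1}, \cdot)$ have the same intersection with $\St(P)$, so by Theorem \ref{Thm:unimodular} their chain-order polytopes are unimodular equivalent and in particular have equal facet counts. If $w_k \in \St(P)$, the star intersections of the two consecutive decompositions differ by exactly $\{w_k\}$, so Corollary \ref{Cor:SHLfacet} applies and the number of facets changes by $(|w_k \rightarrow| - 1)(|\rightsquigarrow w_k| - 1)$, which is strictly positive because a star element satisfies $|w_k \rightarrow| \geq 2$ and $|\rightsquigarrow w_k| \geq 2$. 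Telescoping along the chain, the facet counts of the endpoints differ by $\sum_{w \in W \cap \St(P)} (|w \rightarrow| - 1)(|\rightsquigarrow w| - 1)$; since $W \cap \St(P) = (V_1 \cap \St(P)) \setminus (U_1 \cap \St(P))$ is non-empty by hypothesis, this sum is strictly positive. Hence the facet numbers differ and the polytopes cannot be unimodular equivalent.

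The steps that need genuine care are the admissibility of every intermediate decomposition — which I reduce to the elementary observation about adjoining maximal elements to up-sets — and the check that Corollary \ref{Cor:SHLfacet} is invoked on a pair differing by precisely one star element, which is guaranteed by moving a single element per step. I expect the only real subtlety to be bookkeeping the direction of the facet change (whether placing a star element in the order part raises or lowers the count); this is fixed by Corollary \ref{Cor:SHLfacet} together with the positivity of $(|w \rightarrow| - 1)(|\rightsquigarrow w| - 1)$, and in any case only the strictness of the inequality, not its sign, is required to rule out unimodular equivalence.
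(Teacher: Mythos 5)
Your proposal is correct and takes essentially the same route as the paper: the if-direction is exactly Theorem~\ref{Thm:unimodular}, and the only-if direction is the paper's appeal to facet counts (Proposition~\ref{Prop:facetcount}), which you simply make explicit by interpolating through a chain of admissible decompositions (your up-set argument correctly guarantees admissibility at each step) and telescoping the strictly positive facet differences from Corollary~\ref{Cor:SHLfacet}. The one point left implicit in your write-up is also left implicit in the paper's one-line proof: the facet-counting results you invoke are established under the regularity hypothesis on $(P,A,\blambda)$ stated at the start of Section~\ref{sec5} (and in the introduction's formulation of this theorem), without which the only-if direction can fail.
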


\begin{proof}
The if-part is proved in Theorem \ref{Thm:unimodular}; under the assumption $U_1\subset V_1$, the only if-part holds by Proposition \ref{Prop:facetcount}.
\end{proof}

\section{The Stanley-Hibi-Li conjecture}\label{sec6}
We recall here a conjecture by Stanley, Hibi and Li for chain and order polytopes, \cite{Sta86, HL12a} (see the generalization for marked chain and marked order polytopes in \cite{Fou15a}). For a given $N$-dimensional polytope $P$, we denote
$$
f_i(P) = \# \{ i\,\text{--\,dimensional faces of } P \}
$$
and define the $f$-vector of $P$ to be $f(P):= (f_0(P), \ldots, f_{N}(P))$.
\begin{conjecture}[Stanley, Hibi-Li]\label{hl-conjecture}
Let $(P, A, \blambda)$ be a marked poset. Then for all $i = 0 ,1, \ldots, |P\setminus A|$:
$$f_i(\mathcal{C}_{P, A}(\blambda)) \geq f_i(\mathcal{O}_{P, A}(\blambda)).$$
\end{conjecture}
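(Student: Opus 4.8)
The plan is to deduce the full $f$-vector inequality from a sequence of elementary one-element moves inside the marked chain-order family, exploiting the unimodular classification already proved. Admissible decompositions of $(P,A,\blambda)$ are exactly the pairs $(U_1,U_2)$ with $U_2$ an order ideal of $P\setminus A$, and $\mathcal{O}_{P,A}(\blambda)=\mathcal{CO}_{P\setminus A,\emptyset}(\blambda)$, $\mathcal{C}_{P,A}(\blambda)=\mathcal{CO}_{\emptyset,P\setminus A}(\blambda)$ sit at the two extremes. Fixing a linear extension of $P\setminus A$ and repeatedly moving a currently minimal element of $U_1$ into $U_2$ produces a chain of admissible decompositions from the order to the chain polytope whose consecutive members differ by a single element, so it suffices to compare $f$-vectors across one move, say $U_1=V_1\cup\{p\}$. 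If $p\notin\St(P)$, Theorem~\ref{Thm:unimodular} makes the two consecutive polytopes unimodularly equivalent and the move invisible to every $f_i$. Hence only moves transferring a star element $q$ from the order part to the chain part can change the $f$-vector, and the whole conjecture reduces to the refined single-star statement of the introduction. In the top codimension this is already in hand: Proposition~\ref{Prop:facetcount} and Corollary~\ref{Cor:SHLfacet} show such a move raises the facet count by $(|q\ra|-1)(|\rightsquigarrow q|-1)\ge 1$, giving the facet case of the conjecture by telescoping.

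For a general rank $i$ I would encode each face of a marked chain-order polytope by the collection of defining inequalities from Definition~\ref{def:chain-order} that are tight on it, and attempt to construct, for every $i$, an injection from the rank-$i$ tight-sets of the polytope with $q\in U_1$ into those of the polytope with $q\in V_2$. The two inequality systems agree away from $q$; near $q$ the order block $x_{q'}\le x_q\le x_{q''}$ together with the bounds $\blambda_b\le x_q\le\blambda_a$ is replaced by the chain inequalities obtained by gluing each maximal chain ending in $q$ to each element covering $q$, which is precisely the mechanism producing the extra facets. The aim is to show that this local substitution refines, rather than collapses, the tight-set structure in each rank, so that the desired rank-preserving injection exists.

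The hard part is exactly this last step beyond facets. Facets are pinned down by the closed formula of Proposition~\ref{Prop:facetcount}, but a general $i$-face is cut out by an arbitrary compatible family of tight inequalities, and replacing one order relation at $q$ by the glued chain relations need not preserve the combinatorial type of lower-dimensional faces in any manifestly monotone way. The global comparisons available are of little help here: the transfer map of Remark~\ref{rem-abs} is only a piecewise-linear homeomorphism, and the Ehrhart equivalence of Theorem~\ref{Thm:Ehrhart} concerns lattice-point counts, so neither transports faces or $f$-vectors. Producing a single rank-preserving injection valid simultaneously for all $i$, assembled by hand from the local structure at the star element $q$, is the crux, and is the reason the statement is recorded here only as a conjecture.
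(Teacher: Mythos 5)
The statement you were asked to prove is not proved in the paper at all: it is Conjecture~\ref{hl-conjecture}, recorded there as an open problem (with only the facet case and, for unmarked posets, the vertex case known). So there is no ``paper proof'' to match your argument against, and your proposal, by its own admission, does not close the conjecture either. What you have produced is a correct \emph{reduction}, and it coincides with the paper's own point of view: your observation that admissible decompositions form a chain of one-element moves from $(P\setminus A,\emptyset)$ to $(\emptyset,P\setminus A)$, that moves of non-star elements are invisible by Theorem~\ref{Thm:unimodular}, and that therefore the whole problem telescopes to comparing $f$-vectors across a single star-element move, is exactly the content of the refined conjecture stated in Section~\ref{sec6}. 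Your facet-case argument via Proposition~\ref{Prop:facetcount} and Corollary~\ref{Cor:SHLfacet} likewise reproduces the one case the paper actually establishes, and the direction of the inequality is handled correctly ($U_1\subset V_1$ forces $f_i(\mathcal{CO}_{V_1,V_2}(\blambda))\leq f_i(\mathcal{CO}_{U_1,U_2}(\blambda))$, so the chain polytope sits at the top).

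The genuine gap is the one you name yourself: for $i<|P\setminus A|-1$ no rank-preserving injection of faces across a star move is constructed, and none of the tools in the paper can supply it --- the map of Remark~\ref{rem-abs} is only piecewise linear, Ehrhart equivalence (Theorem~\ref{Thm:Ehrhart}) counts lattice points rather than faces, and the tight-set bookkeeping you sketch around the star element $q$ is precisely where a monotone comparison is not known to exist. So the verdict is: your reduction is sound and faithful to the paper's framework, but the proposal cannot be accepted as a proof of the statement, because the statement is open; what remains missing in your argument is exactly what remains missing in the literature.
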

\begin{remark} 
Here are some remarks around the history and the known cases of the conjecture.
\begin{enumerate}
\item The original conjecture was stated for chain and order polytopes only (\cite{Sta86, HL12a}). The conjecture has been stated for marked chain and marked order polytopes in \cite{Fou15a}.
\item The ($|P\setminus A|-1$)-dimensional case (the number of facets) has been shown for chain and order polytopes in \cite{HL12a} and for marked chain and marked order polytopes in \cite{Fou15a}.
\item The $0$-dimensional case for chain and order polytopes has been shown in \cite{Sta86}, where in fact is was shown that the number of vertices for both polytopes is the same. The $0$-dimensional case for marked chain and order polytopes is still open.
\end{enumerate}
\end{remark}

We would like to state a refinement of the conjecture above.

\begin{conjecture}
Let $(P,A,\blambda)$ be a marked poset, $(U_1, U_2)$ and $(V_1, V_2)$ be two different admissible decompositions with $U_1 \subset V_1$. Then for any $i = 0, 1,\ldots, |P \setminus A|$:
$$
f_i(\mathcal{CO}_{V_1, V_2}(\blambda)) \leq f_i(\mathcal{CO}_{U_1, U_2}(\blambda)).
$$
\end{conjecture}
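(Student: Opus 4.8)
The plan is to reduce the full $f$-vector inequality to a single ``star move'' and then attack that core case by an injection on faces. First I would use that the $f$-vector is a unimodular invariant together with Theorem~\ref{Thm:unimodular}, which shows that the isomorphism type of $\mathcal{CO}_{W_1,W_2}(\blambda)$, and hence every $f_i$, depends only on $W_1\cap\St(P)$. Given $U_1\subset V_1$, both order filters in $P\setminus A$, I would interpolate them by a chain of admissible decompositions
$$U_1=W_1^{(0)}\subset W_1^{(1)}\subset\cdots\subset W_1^{(k)}=V_1,$$
each step adjoining a single element whose strict up-set is already present (so the filter property is preserved). By Theorem~\ref{Thm:unimodular} the steps adjoining a non-star element leave every $f_i$ unchanged, so chaining the inequalities reduces the conjecture to the following single-star statement: for an admissible $(W_1,W_2)$ and $p\in W_2\cap\St(P)$ with $W_1\cup\{p\}$ still a filter, one has
$$f_i\bigl(\mathcal{CO}_{W_1\cup\{p\},\,W_2\setminus\{p\}}(\blambda)\bigr)\le f_i\bigl(\mathcal{CO}_{W_1,W_2}(\blambda)\bigr)\qquad\text{for all }i,$$
i.e. passing the star element $p$ from the chain part to the order part can only decrease face numbers.

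Second, for this single star move I would try to build an explicit dimension-preserving injection from the faces of the ``more ordered'' polytope into those of the ``more chained'' one. The two polytopes share all of their defining data except the inequalities involving $x_p$, so the entire change is local at $p$. Here I would invoke the fibration of Lemma~\ref{Lem:Decomposition}: projecting both polytopes onto the $W_1$-coordinates realizes them as families over the common marked order polytope $\mathcal{O}_{A\cup W_1,A}(\blambda)$, and over each fibre the move is exactly the order-versus-chain flip at the single star element $p$ inside the smaller marked poset $(P,A\cup W_1,\blambda^{\bs})$, in which $p$ is still a star (star-ness depends only on $\ra p$ and $\rightsquigarrow p$, not on the marking). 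The facet level is already settled by Corollary~\ref{Cor:SHLfacet}, whose surplus $(|p\ra|-1)(|\rightsquigarrow p|-1)\ge 0$ records combinatorially that the cover facets ``$x_p\le x_r$'' of the order side are replaced by the family of maximal-chain facets through $p$ on the chain side. Using Proposition~\ref{Prop:facetcount} and the incidence lattice, I would attempt to match each face of the order side with a face of the chain side obtained by ``splitting along $p$'', and check that distinct faces remain distinct and that dimension is preserved.

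The hard part will be controlling all face dimensions simultaneously rather than only the facets. There is no closed formula for $f_i$ in intermediate dimensions, so the counting approach of Proposition~\ref{Prop:facetcount} is unavailable, and the natural substitute --- one injection compatible across every $i$ --- is precisely the substance of the classical Stanley--Hibi--Li conjecture, which is open even in the $0$-dimensional case for marked chain and order polytopes. A promising but delicate route is to promote the piecewise-linear transfer bijection of Remark~\ref{rem-abs}, applied only in the coordinate $x_p$, to a morphism of face posets: its regions of linearity subdivide the chain-side polytope, and one would hope that each face of the order side lifts through this subdivision to a unique face of the same dimension on the chain side, with the facet surplus accounting for the extra chain-side faces. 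Making this lift well defined and injective in every dimension is the principal obstacle; I expect it to require a genuinely new combinatorial model for the faces of marked chain-order polytopes, going beyond the facet description, and it is exactly the reason the statement is posed here only as a conjecture with the facet case (Corollary~\ref{Cor:SHLfacet}) proved.
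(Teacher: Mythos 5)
The statement you are addressing is posed in the paper as a conjecture: the paper itself proves only the top-dimensional case $i=|P\setminus A|-1$ (Corollary~\ref{Cor:SHLfacet}) and explicitly notes that even the $0$-dimensional case is open for marked posets, so there is no proof in the paper to compare against, and any complete argument would be new mathematics. Your first step is correct and genuinely useful: since admissibility of $(W_1,W_2)$ is equivalent to $W_1$ being an up-set of $P\setminus A$, the containment $U_1\subset V_1$ can indeed be interpolated by admissible decompositions adjoining one element at a time (from top to bottom), Theorem~\ref{Thm:unimodular} shows the steps adjoining non-star elements leave the unimodular equivalence class, hence every $f_i$, unchanged, and the conjecture is thereby equivalent to the single-star-move inequality. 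That reduction is sound and consistent with the paper's framework.

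The genuine gap is that the single-star-move inequality is never established, and the two mechanisms you propose for it would fail as stated. First, the fibration of Lemma~\ref{Lem:Decomposition} controls lattice points, not faces: a face of $\mathcal{CO}_{W_1,W_2}(\blambda)$ neither projects to a face of the base order polytope nor is it determined by its intersections with the $\pi_1$-fibers, so fiberwise comparisons of the order-versus-chain flip at $p$ do not assemble into any statement about the $f$-vector of the total polytope; moreover, the fiberwise inequality you would need is again an instance of the open marked Stanley--Hibi--Li conjecture, so this route is also circular. Second, the transfer map of Remark~\ref{rem-abs} is merely a piecewise-linear bijection of point sets, and PL bijections impose no constraint whatsoever on face numbers (any two polytopes of equal dimension admit one); a face of the order side can be broken across several regions of linearity, its image need not be a face, or even a polytope, on the chain side, and there is no canonical dimension-preserving injective lift. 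Turning $\varphi$ into a morphism of face posets is not a technical refinement but precisely the missing idea. You acknowledge this candidly at the end, so what you have is a correct reduction of the conjecture to its essential case, plus a research plan --- not a proof, which is consistent with the statement remaining a conjecture in the paper.
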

\begin{remark}
\begin{enumerate}
\item The case $(U_1,U_2) = (\emptyset,P\setminus A)$ and $(V_1,V_2) = (P\setminus A,\emptyset)$ is precisely Conjecture~\ref{hl-conjecture}.
\item The conjecture in the case $i=|P \setminus A|-1$, i.e., the number of facets, holds by Corollary~\ref{Cor:SHLfacet}.
\end{enumerate}
\end{remark}

\section{Example}\label{sec7}

In this section we apply the construction of marked chain-order polytopes to the following poset $P_n$, called the Gelfand-Tsetlin poset: 
\begin{itemize}
\item the set of vertices of $P_n$ is $\{p_{i,j}\mid 0\leq i\leq j\leq n\}$;
\item the set of cover relations in $P_n$ (\emph{i.e.}, the edges in the corresponding Hasse diagram) is: for any $1\leq i\leq j\leq n$, 
$$p_{i-1,j}\ra p_{i,j}\ra p_{i-1,j-1}.$$
\end{itemize}

The marking subset $A_n$ is the linearly ordered set 
$$
A_n = \{ p_{0,0} \prec  p_{0,1} \prec \ldots \prec  p_{0,n} \}.
$$
We fix a marking $\blambda=(\blambda_0,\blambda_1,\ldots,\blambda_n)$ where $\blambda_k:=\blambda(p_{0,k})$, satisfying 
\begin{equation}\label{Eq:lambda}
\blambda_0\geq\blambda_1\geq\ldots\geq\blambda_n.
\end{equation}

\begin{example}\label{gt-pattern}
We consider the case $n=4$: the poset $P_4$ is the following:
\\
\begin{center}
\begin{tikzpicture}
  \node (a) at (0,0) {$\blambda_0=p_{0,0}$};
  \node (b) at (0,-2) {$\blambda_1=p_{0,1}$};
  \node (c) at (0,-4) {$\blambda_2=p_{0,2}$};
  \node (d) at (0,-6) {$\blambda_3=p_{0,3}$};
  \node (e) at (0,-8) {$\blambda_4=p_{0,4}$};
  \node (f) at (1,-1) {$p_{1,1}$};
  \node (g) at (1,-3) {$p_{1,2}$};
  \node (h) at (1,-5) {$p_{1,3}$};
  \node (i) at (1,-7) {$p_{1,4}$};
  \node (j) at (2,-2) {$p_{2,2}$};
  \node (k) at (2,-4) {$p_{2,3}$};
  \node (l) at (2,-6) {$p_{2,4}$};
  \node (m) at (3,-3) {$p_{3,3}$};
  \node (n) at (3,-5) {$p_{3,4}$};
  \node (o) at (4,-4) {$p_{4,4}$};
  \draw (a) -- (f) -- (j) -- (m) -- (o) 
  (b) -- (g) -- (k) -- (n)
  (c) -- (h) -- (l) 
  (d) -- (i)
  (b) -- (f)
  (c) -- (g) -- (j)
  (d) -- (h) -- (k) -- (m)
  (e) -- (i) -- (l) -- (n) -- (o);
\end{tikzpicture}
\end{center}
\end{example}

In the following lemma we list some basic properties of this marked poset $(P_n,A_n,\blambda)$.

\begin{lemma}
\begin{enumerate}
\item If $\blambda_0>\blambda_1>\ldots>\blambda_n$, then $(P_n,A_n,\blambda)$ is a regular marked poset.
\item The star elements in $(P_n,A_n,\blambda)$ are $\St(P) = \{ p_{i,j} \mid 1 \leq i < j < n \}$.
\end{enumerate}
\end{lemma}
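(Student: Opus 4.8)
The plan is to reduce the statement to the explicit local structure of the Hasse diagram of $P_n$ together with two order-reversing statistics. First I would read off the covers directly from the relations $p_{i-1,j}\ra p_{i,j}\ra p_{i-1,j-1}$: the elements covering $p_{i,j}$ are $p_{i+1,j}$ (present iff $i<j$) and $p_{i-1,j-1}$ (present iff $i\geq 1$), while the elements covered by $p_{i,j}$ are $p_{i-1,j}$ (present iff $i\geq 1$) and $p_{i+1,j+1}$ (present iff $j<n$). Thus $p_{i,j}$ has two elements covering it exactly when $1\leq i<j$, and two elements covered by it exactly when $i\geq 1$ and $j<n$; in particular $p_{0,0}$ is the unique maximal and $p_{0,n}$ the unique minimal element, both lying in $A_n$. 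I would then introduce $f(p_{i,j})=j$ and $g(p_{i,j})=j-i$ and check on the two cover types that both are order-reversing, i.e. $p\preceq q$ implies $f(p)\geq f(q)$ and $g(p)\geq g(q)$. These two statistics control which marked points lie above or below a given vertex.

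For part (1) I would verify the four conditions of Definition~\ref{def:regular}. Condition~(1) holds because every cover incident to a marked point $p_{0,k}$ has its other end at $p_{1,k}$ or $p_{1,k+1}$, neither of which lies in $A_n$, so no two marked points form a cover. Condition~(2) is exactly the strict inequality $\blambda_0>\cdots>\blambda_n$. For~(3) the unique element covering $p_{0,k}$ is $x=p_{1,k}$, and any marked $p_{0,m}\prec x$ satisfies $m=f(p_{0,m})\geq f(p_{1,k})=k$ by order-reversal of $f$, hence $\blambda_m\leq\blambda_k$, so no marked point below $x$ carries a strictly larger marking. For~(4) the unique element covered by $p_{0,k}$ is $x=p_{1,k+1}$, and any marked $p_{0,m}\succ x$ satisfies $m=g(p_{0,m})\leq g(p_{1,k+1})=k$ by order-reversal of $g$, hence $\blambda_m\geq\blambda_k$, so no marked point above $x$ carries a strictly smaller marking. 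Therefore $(P_n,A_n,\blambda)$ is regular.

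For part (2) I would prove both inclusions. If $1\leq i<j<n$, then $p_{i,j}$ has two elements covering it (since $i<j$ and $i\geq 1$) and two elements covered by it (since $i\geq 1$ and $j<n$); prolonging each of the two lower covers downwards to the unique minimum $p_{0,n}$ produces two distinct maximal chains ending in $p_{i,j}$, so $p_{i,j}\in\St(P_n)$. Conversely, if $i=0$ or $i=j$ then $p_{i,j}$ has at most one element covering it and cannot be a star element; and if $1\leq i<j=n$, then the only element covered by $p_{i,n}$ is $p_{i-1,n}$, and iterating this (the second coordinate stays equal to $n$, so the second type of lower cover never occurs) yields the single maximal chain $p_{0,n}\prec p_{1,n}\prec\cdots\prec p_{i,n}$ ending in $p_{i,n}$, whence $\#(\rightsquigarrow p_{i,n})=1$ and $p_{i,n}\notin\St(P_n)$. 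This gives $\St(P_n)=\{p_{i,j}\mid 1\leq i<j<n\}$.

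The only step that is not purely local is the chain count in the converse of part~(2), which I expect to be the main (and mild) obstacle; it is resolved by the observation that descending from $p_{i,n}$ keeps the second coordinate fixed at $n$ and hence forces a unique lower cover at every step. Everything else is bookkeeping with the two cover types and with the order-reversing statistics $f$ and $g$.
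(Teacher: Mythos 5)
Your proof is correct, and it is complete: the paper states this lemma without any proof, and your direct verification --- reading off the two cover types of $P_n$, checking the four conditions of Definition~\ref{def:regular} via the order-reversing statistics $f(p_{i,j})=j$ and $g(p_{i,j})=j-i$, and the uniqueness of the descending chain in the column $j=n$ --- is precisely the routine argument the authors left to the reader. Note only that your reading of ``star element'' (two covering elements and two maximal chains ending in $p$) follows the paper's stated definition rather than its worked example, which tacitly uses the order-dual convention; for $P_n$ both conventions yield the same set $\{p_{i,j}\mid 1\leq i<j<n\}$, so nothing in your argument is affected.
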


By this lemma, results in the previous sections can be applied to the polytopes associated to this marked poset. Let $(U_1,U_2)$ be an admissible decomposition of $(P_n,A_n,\blambda)$.

\begin{corollary}
\begin{enumerate}
\item The chain-order polytope $\mathcal{CO}_{U_1,U_2}(\blambda)$ is a normal polytope.
\item For any two markings $\blambda,\bmu$ of $A_n$ satisfying (\ref{Eq:lambda}), the lattice points in the chain-order polytope satisfy the Minkowski property:
$$S_{U_1,U_2}(\blambda)+S_{U_1,U_2}(\bmu)=S_{U_1,U_2}(\blambda+\bmu).$$
\item The polyhedral cone $\Cone(U_1,U_2)$ is finitely generated.
\end{enumerate}
\end{corollary}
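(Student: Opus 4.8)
The plan is to observe that all three statements are immediate specializations of the general results established in the previous sections, so the only work is to verify that the marked poset $(P_n, A_n, \blambda)$ together with the decomposition $(U_1, U_2)$ satisfies the relevant hypotheses.

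First I would dispose of part (1). The normality corollary following Theorem~\ref{Thm:normal} asserts that \emph{every} marked chain-order polytope attached to a marked poset with an admissible decomposition is a normal lattice polytope, with no further restriction on the poset. Since $(U_1, U_2)$ is assumed to be an admissible decomposition of $(P_n, A_n, \blambda)$, that corollary applies verbatim and yields (1). In particular no regularity hypothesis enters, so the condition $\blambda_0 > \blambda_1 > \ldots > \blambda_n$ from the preceding lemma plays no role here.

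For parts (2) and (3) the key point is that the marking set $A_n = \{ p_{0,0} \prec p_{0,1} \prec \ldots \prec p_{0,n} \}$ is \emph{linearly ordered} by construction, which is precisely the hypothesis required by Theorem~\ref{Thm:Minkowski} and by the finite-generation corollary that follows it. For (2) I would simply invoke Theorem~\ref{Thm:Minkowski}: any two vectors $\blambda, \bmu$ satisfying (\ref{Eq:lambda}) are genuine markings in $\mathbb{Z}_{\geq 0}^{|A_n|}$, so the identity $S_{U_1,U_2}(\blambda) + S_{U_1,U_2}(\bmu) = S_{U_1,U_2}(\blambda+\bmu)$ holds; one checks in passing that the partition condition (\ref{Eq:lambda}) is stable under addition, so that $\blambda + \bmu$ is again a marking of the stated type. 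For (3) I would invoke the corollary of Theorem~\ref{Thm:Minkowski} directly, again using only that $A_n$ is linearly ordered.

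I do not expect any genuine obstacle: the whole content of the corollary is the bookkeeping that $(P_n, A_n, \blambda)$ falls within the scope of the earlier theorems, namely that $A_n$ contains all extremal elements, is linearly ordered, and that $(U_1, U_2)$ is admissible. The one mildly delicate point is to keep straight that the hypotheses of Theorem~\ref{Thm:Minkowski} and its corollary are stated for a linearly ordered \emph{marking set} $A$ (satisfied here), and not for a linearly ordered poset $P_n$ (which is false).
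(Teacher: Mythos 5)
Your proposal is correct and matches the paper's own (implicit) argument: the paper proves this corollary simply by noting that the general results of Sections~\ref{sec3} apply to $(P_n,A_n,\blambda)$, exactly as you do --- the normality corollary of Theorem~\ref{Thm:normal} for (1), Theorem~\ref{Thm:Minkowski} for (2), and its finite-generation corollary for (3), with the only hypotheses to check being admissibility of $(U_1,U_2)$ and that $A_n$ is linearly ordered. Your observation that regularity of $\blambda$ is irrelevant here is also accurate; that hypothesis only enters the facet-counting results of Section~\ref{sec5}.
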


We turn to count the number of lattice points.

\begin{corollary}
Let $\lambda = (\lambda_0, \ldots, \lambda_n)$ be a marking satisfying (\ref{Eq:lambda}). Then for any admissible decomposition $(U_1,U_2)$, the number of lattice points in the marked chain-order polytope $\mathcal{CO}_{U_1,U_2}(\blambda)$ is given by 
$$
\prod_{0 \leq i \leq j \leq n}\frac{ (\blambda_i-\blambda_j+j-i+1)}{j-i+1}.
$$
\end{corollary}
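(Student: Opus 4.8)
The plan is to reduce the count to a single, classical member of the family and then quote a standard result. By Theorem~\ref{Thm:Ehrhart} all the marked chain-order polytopes $\mathcal{CO}_{U_1,U_2}(\blambda)$ attached to admissible decompositions of $(P_n,A_n,\blambda)$ are Ehrhart equivalent, and in particular share the same number of lattice points $|S_{U_1,U_2}(\blambda)|$. It therefore equals $|S_{P_n\setminus A_n,\emptyset}(\blambda)|$, the number of lattice points of the extremal case. By Example~\ref{Ex:extremal} this extremal polytope is the marked order polytope $\mathcal{O}_{P_n,A_n}(\blambda)$, so it suffices to count the lattice points of $\mathcal{O}_{P_n,A_n}(\blambda)$ for a single (arbitrary) admissible decomposition.

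For this I would invoke the identification already recalled in the introduction: $\mathcal{O}_{P_n,A_n}(\blambda)$ is the Gelfand--Tsetlin polytope. Concretely, a lattice point assigns nonnegative integers to the $p_{i,j}$ that are weakly increasing along the cover relations $p_{i-1,j}\prec p_{i,j}\prec p_{i-1,j-1}$ and take the value $\blambda_k$ on $p_{0,k}$; reading the values row by row produces exactly the interlacing triangular arrays with top row $\blambda=(\blambda_0\geq\cdots\geq\blambda_n)$, i.e. the Gelfand--Tsetlin patterns. These index a basis of the irreducible $\lie{sl}_{n+1}$-module $V(\blambda)$ (\cite{GT50}), so $|S_{U_1,U_2}(\blambda)|=\dim V(\blambda)$ for every admissible decomposition.

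Finally I would compute $\dim V(\blambda)$ by the Weyl dimension formula in type $A_n$: taking the positive roots to be $e_i-e_j$ for $0\leq i<j\leq n$, one has $\dim V(\blambda)=\prod_{i<j}\langle\blambda+\rho,e_i-e_j\rangle/\langle\rho,e_i-e_j\rangle$, where $\langle\blambda,e_i-e_j\rangle=\blambda_i-\blambda_j$ and $\langle\rho,e_i-e_j\rangle=j-i$. This yields precisely the product over pairs $i<j$ appearing in the statement. I would emphasize here that $\blambda$ is assumed only weakly decreasing, so no strictness or regularity hypothesis is needed: the Weyl formula holds for every dominant integral weight, and the diagonal terms $i=j$ contribute trivial factors.

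The representation-theoretic route carries no real obstacle, since all the substance is supplied by Theorem~\ref{Thm:Ehrhart} together with the classical Gelfand--Tsetlin count. If instead one wanted a self-contained combinatorial argument, one would enumerate the patterns directly by induction on $n$, summing over the admissible second row subject to the interlacing constraints (this is the $\lie{gl}_{n+1}\downarrow\lie{gl}_n$ branching). There the main difficulty would be resumming the resulting nested binomial sums into the closed product form, which is exactly the step the Weyl dimension formula lets us bypass.
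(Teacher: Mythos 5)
Your proposal follows exactly the paper's own proof: reduce via Theorem~\ref{Thm:Ehrhart} to the marked order polytope $\mathcal{O}_{P_n,A_n}(\blambda)$, identify its lattice points (Gelfand--Tsetlin patterns) with a basis of $V(\blambda)$, and apply Weyl's dimension formula; the paper's proof is just a three-line version of this. One caveat, however: your closing claim that the Weyl formula ``yields precisely the product over pairs $i<j$ appearing in the statement'' is not literally true. The Weyl formula gives $\prod_{0\le i<j\le n}\frac{\blambda_i-\blambda_j+j-i}{j-i}$, whereas the displayed product in the statement, read as printed, is $\prod_{0\le i\le j\le n}\frac{\blambda_i-\blambda_j+j-i+1}{j-i+1}$; these do not agree factor by factor, and the printed formula is not even an integer in general (for $n=1$, $\blambda=(1,0)$ it gives $3/2$ while the true count is $2$). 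The discrepancy is a misprint in the paper's statement: the intended product is the Weyl formula reindexed over the boxes of the poset, namely $\prod_{1\le i\le j\le n}\frac{\blambda_{i-1}-\blambda_j+j-i+1}{j-i+1}$, where the pair $(i,j)$ corresponds to the vertex $p_{i,j}$ and to the positive root $e_{i-1}-e_j$. Your argument is mathematically sound and proves the correct count; you should just make this reindexing explicit rather than asserting a match with the formula as printed, since the diagonal-terms remark alone does not reconcile the two expressions.
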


\begin{proof}
By Theorem \ref{Thm:Ehrhart}, it suffices to count the lattice points in the marked order polytope $\mathcal{O}_{P_n,A_n}(\blambda)$, which is shown in \cite{ABS11} to be the dimension of the irreducible representation $V(\blambda)$ of $\lie {sl}_{n+1}$ associated to $\blambda$. Then Weyl's dimension formula applies.
\end{proof}

We estimate the number of non-isomorphic chain-order polytopes. We define
$$\mathcal{M}_{P_n,A_n}(\blambda)=\{\mathcal{CO}_{U_1,U_2}(\blambda)\mid (U_1,U_2)\text{ is an admissible decomposition }\}/\sim,$$
where two polytopes $Q_1\sim Q_2$ if and only if they are unimodular equivalent.

\begin{proposition}\label{Prop:count}
Suppose $\blambda$ is regular, then we have $\#\mathcal{M}_{P_n,A_n}(\blambda)\leq 2^{n-2}$.
\end{proposition}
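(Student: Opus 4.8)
The plan is to bound $\#\mathcal{M}_{P_n,A_n}(\blambda)$ by the number of distinct ``shadows'' $U_1\cap\St(P)$ produced by admissible decompositions, and then to count those shadows combinatorially. \textbf{Step 1 (reduction to filters of $\St(P)$).} Since $\blambda$ is regular, $(P_n,A_n,\blambda)$ is a regular marked poset, so Theorem~\ref{Thm:unimodular} applies: whenever $U_1\cap\St(P)=V_1\cap\St(P)$ the polytopes $\mathcal{CO}_{U_1,U_2}(\blambda)$ and $\mathcal{CO}_{V_1,V_2}(\blambda)$ are unimodular equivalent. Hence the assignment $(U_1,U_2)\mapsto U_1\cap\St(P)$ factors the class map, and the number of equivalence classes is at most the number of distinct sets $U_1\cap\St(P)$. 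I would then note that admissibility of $(U_1,U_2)$ means exactly that $U_1$ is an order filter (up-set) of $P_n\setminus A_n$: if $u_1\in U_1$ and $u_1\prec w$ then $w\notin U_2$, so $w\in U_1$. Intersecting with $\St(P)$ preserves this, so $U_1\cap\St(P)$ is always an order filter of the induced subposet $\St(P)$. Thus $\#\mathcal{M}_{P_n,A_n}(\blambda)\leq \#\{\text{order filters of }\St(P)\}$, and it remains to show this last number is $2^{n-2}$. (Only the forward direction of Theorem~\ref{Thm:unimodular} is used; the result is an inequality because a priori distinct filters might still yield equivalent polytopes.)

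\textbf{Step 2 (the induced order).} By the Lemma, $\St(P)=\{p_{i,j}\mid 1\leq i<j<n\}$. Using the covers $p_{i,j}\prec p_{i+1,j}$ and $p_{i,j}\prec p_{i-1,j-1}$, I would first establish that on $\St(P)$ one has $p_{i,j}\preceq p_{i',j'}$ if and only if $j'\leq j$ and $j'-i'\leq j-i$. In the coordinates $(u,v):=(j,\,j-i)$ this is precisely the reverse of the componentwise order, so $\St(P)$ is anti-isomorphic to $T:=\{(u,v)\mid 2\leq u\leq n-1,\ 1\leq v\leq u-1\}$ equipped with the product order. As a poset and its dual have equally many filters and ideals, counting order filters of $\St(P)$ is the same as counting order ideals of $T$.

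\textbf{Step 3 (counting ideals of the staircase, the crux).} An order ideal $I$ of $T$ is determined row by row. In row $v$ (whose $u$-values range over $\{v+1,\dots,n-1\}$) downward closure forces $I$ to contain an initial block $\{v+1,\dots,v+h_v\}$, and downward closure across rows forces the nonempty rows to be an initial segment $v=1,\dots,k$ with strictly decreasing heights $h_1>h_2>\cdots>h_k\geq 1$, subject to the caps $h_v\leq n-1-v$. The one non-routine observation, which I expect to be the heart of the argument, is that these caps are automatic: for any subset $\{h_1>\cdots>h_k\}$ of $\{1,\dots,n-2\}$, its $v$-th largest element is at most $(n-2)-(v-1)=n-1-v$. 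Therefore order ideals of $T$ are in bijection with arbitrary subsets of $\{1,\dots,n-2\}$, giving exactly $2^{n-2}$ of them. Combined with Step~1 this yields $\#\mathcal{M}_{P_n,A_n}(\blambda)\leq 2^{n-2}$.

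The main obstacle is the combinatorial bookkeeping in Steps~2--3: correctly identifying the induced order on $\St(P)$ and recognizing the automatic-cap phenomenon that collapses the constrained count to a plain $2^{n-2}$. The small cases $n=3,4,5$, which give $2,4,8$ filters respectively, are convenient sanity checks to guide and confirm this identification; regularity is invoked only to license Theorem~\ref{Thm:unimodular}.
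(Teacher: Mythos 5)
Your proof is correct, and its first step coincides with the paper's: both invoke Theorem~\ref{Thm:unimodular} to bound $\#\mathcal{M}_{P_n,A_n}(\blambda)$ by the number of up-sets of $\St(P_n)$ (equivalently, admissible two-block partitions $(S_1,S_2)$ of $\St(P_n)$), using only the forward implication, which is why the result is an inequality. Where you genuinely diverge is in counting these up-sets. The paper argues by induction on $n$: it peels off the chain $R=\St(P_n)\setminus\St(P_{n-1})=\{p_{1,n-1}\prec\cdots\prec p_{n-2,n-1}\}$, classifies an admissible partition by where it cuts this chain, notes that a cut at level $k$ forces a prescribed set of elements into $S_1$ and leaves free a subposet that is a copy of $\St(P_{k+1})$, and sums the recursion $1+\sum_{k=0}^{n-3}2^k=2^{n-2}$. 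You instead make the induced order on $\St(P_n)$ explicit: $p_{i,j}\preceq p_{i',j'}$ iff $j'\leq j$ and $j'-i'\leq j-i$ (correct, since both cover moves $p_{i,j}\prec p_{i+1,j}$ and $p_{i,j}\prec p_{i-1,j-1}$ weakly decrease $j$ and $j-i$, and conversely these two moves realize any such pair), so that $\St(P_n)$ is anti-isomorphic to the staircase $T=\{(u,v)\mid 2\leq u\leq n-1,\ 1\leq v\leq u-1\}$ with componentwise order; its order ideals correspond to strictly decreasing row-height sequences $h_1>\cdots>h_k\geq 1$, and your ``automatic caps'' observation ($h_v\leq h_1-(v-1)\leq n-1-v$) identifies these with arbitrary subsets of $\{1,\ldots,n-2\}$, giving $2^{n-2}$ directly. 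Both counts are complete; the paper's induction avoids any global description of the poset $\St(P_n)$, while your route is non-inductive and buys an explicit, canonical parametrization of the up-sets by subsets of $\{1,\ldots,n-2\}$, which makes the number $2^{n-2}$ transparent and exhibits concrete representatives for the candidate equivalence classes.
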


\begin{proof}
By Theorem \ref{Thm:unimodular}, the cardinality of $\mathcal{M}_{P_n,A_n}(\blambda)$ is no more than the number of partitions of $\St(P_n)$ into two subsets $S_1$ and $S_2$ such that there does not exist $s_1\in S_1$ and $s_2\in S_2$ satisfying $s_1\prec s_2$, which will be proved to be $2^{n-2}$. We call such a partition admissible.
\par
We argue by induction on $n$. The difference between the star elements $\St(P_n)$ and $\St(P_{n-1})$ is
$$R=\{p_{1,n-1},\, p_{1,n-2},\,\ldots,\, p_{n-2,n-1}\}.$$
Let $S_1$, $S_2$ be an admissible partition of $\St(P_n)$. Since the set $R$ is linearly ordered, there exists $0\leq k\leq n-2$ such that 
$$S_1\cap R=\{p_{k+1,n-1},\,\ldots,\,p_{n-2,n-1}\},\ \ S_2\cap R=\{p_{1,n-1},\,\ldots,\,p_{k,n-1}\}.$$
We count the numbers of admissible partitions $S_1$, $S_2$ of $\St(P_n)$ satisfying this property. Suppose that $k\geq 2$.
\par
The hypothesis on $S_1$ and $S_2$ ensures that
$$\{p_{k+1,n-1},p_{k,n-2},\ldots,p_{1,n-k-1},p_{k+2,n-1},\ldots,p_{1,n-k-2},\ldots,p_{n-2,n-1},\ldots,p_{2,1}\}\subset S_1.$$
It suffices to consider the admissible partitions of 
$$T=\{p_{1,n-2},p_{2,n-2},p_{1,n-3},\ldots,p_{k-1,n-2},\ldots,p_{1,n-k}\},$$
which can be looked as the star elements in $P_{k+1}$. By induction hypothesis, there are $2^{k-1}$ such admissible partitions.
\par
When $k=0$ or $k=1$, there exists a unique partition of $T$. Therefore the number of different admissible partitions of $\St(P_n)$ is
$$1+\sum_{k=0}^{n-3}2^k=2^{n-2}.$$
\end{proof}

\end{document}